\newtheorem{theorem}{Theorem}[section]
\newtheorem{lemma}[theorem]{Lemma}
\newtheorem{corollary}[theorem]{Corollary}
\newtheorem{proposition}[theorem]{Proposition}
\newtheorem{question}[theorem]{Question}
\theoremstyle{definition}
\newtheorem{defn}[theorem]{Definition}
\newtheorem{remark}[theorem]{Remark}
\def\deg{\mbox{deg}\,}
\newcommand*{\sbr}[1]{\scalebox{0.8}{$(#1)$}}
\newcommand*{\db}[1]{\llbracket #1\rrbracket}
\newcommand{\mc}{\mathcal}
\newcommand{\mb}{\mathbb}
\newcommand{\wh}{\widehat}
\newcommand{\wt}{\widetilde}
\newcommand{\id}{\mathrm{id}}
\DeclareMathOperator{\ab}{Z}
\DeclareMathOperator{\tran}{\Theta}
\DeclareMathOperator{\q}{c}
\DeclareMathOperator{\ns}{X}
\DeclareMathOperator{\nss}{Y}
\DeclareMathOperator{\co}{\circ\hspace{-0.02 cm}}
\DeclareMathOperator{\cu}{C}
\DeclareMathOperator{\pr}{pr}
\begin{document}

\vspace*{-2cm}

\title[On the inverse theorem for Gowers norms in abelian groups of bounded torsion]{On the inverse theorem for Gowers norms\\ in abelian groups of bounded torsion}

\author{Pablo Candela}
\address{Universidad Aut\'onoma de Madrid and ICMAT\\ 
Ciudad Universitaria de Cantoblanco\\ Madrid 28049\\ Spain}
\email{pablo.candela@uam.es}

\author{Diego Gonz\'alez-S\'anchez}
\address{MTA Alfr\'ed R\'enyi Institute of Mathematics, Re\'altanoda u. 13-15.\\
Budapest, Hungary, H-1053}
\email{diegogs@renyi.hu}

\author{Bal\'azs Szegedy}
\address{MTA Alfr\'ed R\'enyi Institute of Mathematics, Re\'altanoda u. 13-15.\\
Budapest, Hungary, H-1053}
\email{szegedyb@gmail.com}

\begin{abstract}
In recent work, Jamneshan, Shalom and Tao proved an inverse theorem for the Gowers $U^{k+1}$-norm on finite abelian groups of fixed torsion $m$, where the final correlating harmonic is a polynomial phase function of degree at most $C(k,m)$. They also posed a related central question, namely, whether the bound $C$ can be reduced to the optimal value $k$ for every $m$. We make progress on this question using nilspace theory. 
First we connect the question to the study of finite nilspaces whose structure groups have torsion $m$. Then we prove one of the main results of this paper: a primary decomposition theorem for finite nilspaces, extending the Sylow decomposition in group theory. Thus we give an analogue for nilspaces of an ergodic-theoretic Sylow decomposition in the aforementioned work of Jamneshan--Shalom--Tao. We deduce various consequences which illustrate the following general idea: the primary decomposition enables a reduction of higher-order Fourier analysis in the $m$-torsion setting to the case of abelian $p$-groups. These consequences include a positive answer to the question of Jamneshan--Shalom--Tao when $m$ is squarefree, and also a new relation between uniformity norms and certain generalized cut norms on products of abelian groups of coprime orders. Another main result in this paper is a positive answer to the above central question for the $U^3$-norm, proving that $C(2,m)=2$ for all $m$. Finally, we give a partial answer to the question for all $k$ and $m$, proving an inverse theorem involving extensions of polynomial phase functions which were introduced by the third-named author, known as \emph{projected} phase polynomials of degree $k$. A notable aspect is that this inverse theorem implies that of Jamneshan--Shalom--Tao, while involving projected phase polynomials of degree $k$, which are genuine obstructions to having small $U^{k+1}$-norm.
\end{abstract}
\keywords{Gowers norms, Inverse theorem, Bounded torsion}
\maketitle

\section{Introduction}

\noindent Inverse theorems for Gowers norms are foundational results in higher-order Fourier analysis, their general idea being to characterize when a bounded function on an abelian group has a large Gowers norm. Since the original inverse theorem proved in \cite{GTZ} for functions on finite cyclic groups (or finite intervals in $\mb{Z}$), there have been several works deepening and extending this theory, proving inverse theorems for more general abelian groups, or focusing on special classes of abelian groups to establish more refined inverse theorems. We refer to \cite{CSinverse,GM2,J&T,MannersQIinv} for background on some of the more recent  developments in these directions.

This paper focuses on the case of the inverse theorem concerning abelian groups of \emph{bounded torsion} $m$ (or \emph{$m$-torsion abelian groups}) for some fixed positive integer $m$, that is, abelian groups $\ab$ such that every $x\in \ab$ satisfies $m x=0$. 

Our starting point and main inspiration in this direction is the recent work of Jamneshan, Shalom and Tao \cite{JST-tot-dis}, and in particular the following theorem (see \cite[Theorem 1.12]{JST-tot-dis}). Recall that given abelian groups $\ab,\ab'$, given a map $P:\ab\to\ab'$ and any $h\in \ab$, we may take the discrete derivative $\partial_h P:\ab\to\ab'$ defined by $\partial_h P(x)=P(x+h)-P(x)$, and then we say that $P$ is \emph{polynomial of degree at most $k$} if applying discrete derivatives $k+1$ times to $P$ we obtain the 0 map, i.e.\ if $\partial_{h_1}\cdots\partial_{h_{k+1}}(P)(x)=0$ for all $x,h_1,\ldots,h_{k+1}\in \ab$.

\begin{theorem}\label{thm:JSTbt}
Let $k,m$ be positive integers and let $\delta > 0$. Then there exist constants $\varepsilon = \varepsilon(\delta,k,m)>0$ and $C = C(k, m)>0$ such that for every finite $m$-torsion abelian group $\ab$ and every 1-bounded function $f: \ab\to \mb{C}$ with $\| f \|_{U^{k+1}} > \delta$, there exists a polynomial $P:\ab\to\mb{R}/\mb{Z}$ of degree at most $C$ such that
$|\mb{E}_{x\in \ab} f (x)e(-P(x))| >\varepsilon$.
\end{theorem}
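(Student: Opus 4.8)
The plan is to reduce Theorem~\ref{thm:JSTbt} to the known inverse theorem for the Gowers $U^{k+1}$-norm on finite abelian groups phrased in terms of nilspaces, and then to use the $m$-torsion hypothesis to make the relevant nilspace bounded in size and to extract from it a polynomial phase of bounded degree. First I would invoke the nilspace inverse theorem: from $\|f\|_{U^{k+1}}>\delta$ one obtains a constant $c=c(\delta,k)>0$, a finite $k$-step nilspace $\mc{X}$, a polynomial morphism $\varphi\colon\ab\to\mc{X}$, and a $1$-bounded function $g\colon\mc{X}\to\mb{C}$ with $\bigl|\mb{E}_{x\in\ab}\,f(x)\,\overline{g(\varphi(x))}\bigr|\ge c$. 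At this point the step of $\mc{X}$ is controlled ($=k$) but its structure groups $Z_1(\mc{X}),\dots,Z_k(\mc{X})$, hence $\mc{X}$ itself, could still be arbitrarily large, so the remaining work splits into: (i) bounding $|\mc{X}|$ in terms of $k$ and $m$; and (ii) converting $g\circ\varphi$ into a polynomial phase $e(-P)$ of degree at most some $C(k,m)$.

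For (i), the key point is that a degree-$k$ polynomial morphism out of an $m$-torsion group is strongly constrained. Replacing $\mc{X}$ by the sub-nilspace generated by $\varphi(\ab)$, I would prove by induction up the tower that the part of each structure group $Z_i(\mc{X})$ reached by $\varphi$ is annihilated by a power of $m$ bounded in terms of $k$: informally, multiplying a degree-$i$ coordinate of $\varphi$ by a suitable power of $m$, and using both the identity $mx=0$ in $\ab$ and the vanishing of the $(i{+}1)$-st discrete derivatives, forces that coordinate to become trivial modulo the lower part of the tower. Passing to the corresponding quotient nilspace, we may then assume every structure group has order dividing a fixed function of $k$ and $m$; since the step is $k$, this yields $|\mc{X}|\le C_1(k,m)$.

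For (ii), on a finite nilspace of bounded size one writes $g$ as a bounded linear combination of the characters assembled from the structure groups along the tower, and pigeonholes to select a single such character $\chi$ with $\bigl|\mb{E}_{x\in\ab} f(x)\,\overline{\chi(\varphi(x))}\bigr|\ge c/C_1(k,m)$; the composite $\chi\circ\varphi\colon\ab\to\mb{R}/\mb{Z}$ is then a polynomial, and writing it as $-P$ gives $\bigl|\mb{E}_{x\in\ab}f(x)e(-P(x))\bigr|\ge\varepsilon$ with $\varepsilon=c/C_1(k,m)$. I expect the genuine obstacle to be controlling the degree of $P$ uniformly in $\ab$ using only $k$ and $m$: a character of the $i$-th structure group, pulled back along $\varphi$, need not be a phase polynomial of degree $i$ but only of some larger degree, because representing $p$-power-torsion nilspace coordinates by $\mb{R}/\mb{Z}$-valued polynomials introduces denominators and thereby inflates the degree (the same mechanism responsible for the gap between $C(k,m)$ and the optimal value $k$). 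Establishing that this degree is nevertheless bounded by a fixed $C(k,m)$ is the crux, and obtaining the sharp constant in special cases — $C=k$ for squarefree $m$, and $C(2,m)=2$ — is precisely what the primary decomposition of finite nilspaces developed later in the paper is designed for.
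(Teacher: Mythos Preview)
Your outline matches the paper's strategy at the coarse level (nilspace inverse theorem $\Rightarrow$ bound $|\mc{X}|$ using $m$-torsion $\Rightarrow$ extract a polynomial phase), and step~(i) is essentially Theorem~\ref{thm:nil-for-m-exp} and Proposition~\ref{prop:b-bal-m-bounded-finite}. But step~(ii) as written has a genuine gap, and it is not just the degree bound you flag at the end.

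The problem is earlier: a finite $k$-step nilspace $\mc{X}$ is \emph{not} an abelian group, so there is no Fourier decomposition of $g$ into ``characters assembled from the structure groups along the tower''. The structure groups $\ab_i(\mc{X})$ act on the fibres of the tower, but their characters are functions on those groups, not on $\mc{X}$; to turn them into functions on $\mc{X}$ you would need cross-sections of the bundle maps, and then the resulting $\chi$ is \emph{not} a nilspace morphism $\mc{X}\to\mc{D}_k(\mb{R}/\mb{Z})$, so there is no reason for $\chi\circ\varphi$ to be a polynomial on $\ab$ at all, let alone of bounded degree. The assertion ``the composite $\chi\circ\varphi$ is then a polynomial'' is precisely the missing idea, not a routine step.

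The paper fills this gap with two ingredients you do not mention. First, Corollary~\ref{cor:fin-ns-quot-modN}: every finite $m$-torsion $k$-step nilspace is a \emph{fibration image} of an abelian group nilspace $\nss=\prod_{i=1}^k\mc{D}_i(\mb{Z}_{m^\gamma}^{a_i})$. On $\nss$ one \emph{can} do Fourier analysis, but the morphism $\phi:\mc{D}_1(\ab)\to\mc{X}$ does not lift to $\nss$ directly; one must first lift $\ab$ to a covering group $B\cong\mb{Z}_{m^\gamma}^{r}$ (see diagram~\eqref{diag:proj-phases}). This yields a genuine degree-$k$ phase polynomial on $B$, hence only a \emph{projected} phase polynomial on $\ab$ (Theorem~\ref{thm:invboundedtor}). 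Second, to get back to a true polynomial on $\ab$ one needs Theorem~\ref{thm:most-general-cross-poly}: the surjection $\tau:B\to\ab$ admits a polynomial cross-section of degree $O_{k,m}(1)$, and composing with it converts the degree-$k$ polynomial on $B$ into a degree-$C(k,m)$ polynomial on $\ab$. The degree inflation you anticipated is real, and it enters exactly here, through the cross-section; but you cannot even reach that point without the abelian-group-nilspace lift.
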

\noindent An ancestor of (and motivator for) Theorem \ref{thm:JSTbt} is the well-known result in the case where $m$ is prime, namely the inverse theorem for vector spaces $\mb{F}_p^n$ from \cite[Theorems 1.9 and 1.10]{T&Z-High} and \cite[Theorem 1.10]{T&Z-Low}. This is generally viewed as a qualitatively satisfactory inverse theorem in that case, and Theorem \ref{thm:JSTbt} contributes to an extension of this result into the realm of bounded-torsion groups. However, in the case of $\mb{F}_p^n$, the degree bound for the polynomial $P$ is guaranteed to be the optimal one, namely $k$ (rather than some constant $C$ possibly larger than $k$ as in Theorem \ref{thm:JSTbt}). This motivated Question 1.9 in \cite{JST-tot-dis}, which we recall here for convenience.
\begin{question}[See Question 1.9 in \cite{JST-tot-dis}]\label{Q:JST} 
Can one take the constant $C(k,m)$ in Theorem \ref{thm:JSTbt} always to be $k$?
\end{question}

\noindent In this paper we make progress on this question using nilspace theory (see \cite{CamSzeg,Cand:Notes1,Cand:Notes2,GMV1,GMV2,GMV3}). 

Our first step is to apply the general inverse theorem for compact abelian groups \cite[Theorem 5.2]{CSinverse} and observe that, when this is restricted to the present case of $m$-torsion abelian groups, the resulting compact nilspace (underlying the nilspace polynomial) in \cite[Theorem 5.2]{CSinverse} must be what we will call an \emph{$m$-torsion nilspace}, that is, a nilspace whose structure groups are all of torsion $m$. We prove this in Section \ref{sec:nil-bnd-tor} by a simple adaptation of an argument that had already yielded the finite-field case of this result in \cite{CGSS-p-hom}; see Theorem \ref{thm:nil-for-m-exp}. This result can be compared with \cite[Theorem 1.4]{JST-tot-dis}, which is an analogue in ergodic theory.

The first step above yields an approach to Question \ref{Q:JST} consisting in analyzing $m$-torsion nilspaces of finite rank. These nilspaces are finite (see the proof of \cite[Proposition 2.3]{CGSS-p-hom}). Focusing on such finite nilspaces, we then prove one of the main results of this paper, namely a \emph{primary decomposition} (or \emph{Sylow decomposition}) for finite nilspaces, generalizing the classical primary decomposition of finite abelian groups. To formulate this, we introduce the following class of nilspaces, which extends the class of $p$-homogeneous nilspaces from \cite{CGSS-p-hom}.
\begin{defn}\label{def:pnil}
Let $p$ be a prime. We say that a $k$-step nilspace $\ns$ is a \emph{$p$-nilspace} if every structure group of $\ns$ is a $p$-group. (For the notion of structure groups see \cite[Theorem 3.2.19]{Cand:Notes1}.)
\end{defn}
For any integer $N$, we shall write $\mc{P}(N)$ for the set of primes dividing $N$.
\begin{theorem}[Primary decomposition of finite nilspaces]\label{thm:p-sylow-intro}
Let $\ns$ be a finite $k$-step nilspace. Then $\ns$ is isomorphic to a product nilspace $\prod_{p\in \mc{P}(|\ns|)} \ns_p$ where $\ns_p$ is a $k$-step $p$-nilspace for each $p\in \mc{P}(|\ns|)$. Moreover, this decomposition is unique up to isomorphism.
\end{theorem}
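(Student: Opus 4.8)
The plan is to induct on the number of steps $k$, exploiting the description of a finite nilspace as an iterated tower of abelian-group extensions and splitting each extension into its $p$-primary parts by a cohomological argument. The base cases $k\le 1$ are immediate: a $0$-step nilspace is a point and a $1$-step nilspace is a finite abelian group $A$ with its standard cube structure, so the classical primary decomposition $A\cong\bigoplus_{p\in\mc{P}(|A|)}A_{(p)}$, together with its uniqueness, is exactly the assertion.

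For the inductive step, let $\ns$ be a finite $k$-step nilspace with $k\ge 2$, write $A=A_k(\ns)$ for its top structure group, and let $\ns'$ be its $(k-1)$-step factor $\ns/\!\sim_{k-1}$, so that $\ns$ is a degree-$k$ extension of $\ns'$ by $A$, classified up to cohomology by a cocycle $\sigma$ of degree $k$ on $\ns'$ with values in $A$ (in the sense of \cite{Cand:Notes1,CamSzeg}); replacing $\sigma$ by a cohomologous cocycle changes $\ns$ only up to isomorphism over $\ns'$. By the inductive hypothesis $\ns'\cong\prod_{p\in\mc{P}(|\ns'|)}\ns'_p$ with each $\ns'_p$ a $(k-1)$-step $p$-nilspace, and using the decomposition $A\cong\bigoplus_p A_{(p)}$ together with the additivity of the cocycle formalism in the coefficient group we may write $\sigma=\sum_p\sigma_{(p)}$ with $\sigma_{(p)}$ valued in the $p$-group $A_{(p)}$, so that $\ns$ is the fibre product over $\ns'$ of the extensions $\ns'[\sigma_{(p)}]$.

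The crux is a Künneth-type statement for nilspace cohomology. Writing $\ns'=\ns'_p\times\ns''_p$ with $\ns''_p:=\prod_{q\ne p}\ns'_q$, I claim that $\sigma_{(p)}$ is cohomologous to the pullback along the coordinate projection $\ns'\to\ns'_p$ of a cocycle on $\ns'_p$. Here $|\ns''_p|$ is coprime to $p$ while $A_{(p)}$ is a $p$-group: injectivity of pullback on degree-$k$ cohomology with $A_{(p)}$-coefficients follows by restricting to a slice $\ns'_p\times\{x\}$, and surjectivity should follow by averaging a given cocycle over the $\ns''_p$-directions, using that the relevant cohomology of the finite nilspace $\ns''_p$ with $A_{(p)}$-coefficients vanishes, being at once $|\ns''_p|$-torsion and $p$-power torsion. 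I expect this vanishing — equivalently, that the degree-$k$ cohomology of a finite nilspace with coefficients in a finite abelian group is annihilated by the order of the nilspace — to be the main technical obstacle, to be handled either by an induction on steps parallel to the main one, or via the finiteness and functoriality of nilspace cohomology together with a transfer/averaging argument over the fibres of the factor tower. Granting it, replace $\sigma$ by $\tilde\sigma=\sum_p\tilde\sigma_{(p)}$ with $\tilde\sigma_{(p)}$ pulled back from $\ns'_p$; then $\ns\cong\ns'[\tilde\sigma]\cong\prod_p\ns'_p[\tilde\sigma_{(p)}]$, and each factor $\ns_p:=\ns'_p[\tilde\sigma_{(p)}]$ is a $k$-step nilspace whose structure groups are those of $\ns'_p$ together with $A_{(p)}$, all $p$-groups; hence $\ns_p$ is a $p$-nilspace. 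As $|\ns_p|$ is the product of the orders of these $p$-groups it is a power of $p$, and comparison with $|\ns|=\prod_p|\ns_p|$ identifies the index set with $\mc{P}(|\ns|)$.

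Uniqueness will follow by the same induction on steps. The structure group of a product nilspace at each level is the product of the structure groups of the factors, and the structure groups of $\ns$ are intrinsic, so in any decomposition of $\ns$ into $p$-nilspaces the structure groups of the $p$-factor are forced to be the $p$-primary components of those of $\ns$, by uniqueness of the classical primary decomposition. Passing to $(k-1)$-step factors, the $p$-factors of $\ns'$ are unique by the inductive hypothesis, and the classifying cohomology class of $\ns$ over $\ns'$ decomposes uniquely into its $p$-primary parts under the Künneth isomorphism above, which pins down each $\ns_p$ up to isomorphism.
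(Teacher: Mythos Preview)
Your existence strategy is essentially the paper's: induct on $k$ and reduce to a cocycle-splitting statement on the product base. The paper organizes the induction slightly differently --- peeling off one prime at a time from the top structure group via a sub-induction on the number of prime factors of $|\ab_k|$ --- but the core content is the proposition you isolate: if $\ns$ is a degree-$k$ extension of $\nss_1\times\nss_2$ by $\ab$ with $\gcd(|\ab|,|\nss_1|)=1$, then the defining cocycle is cohomologous to one pulled back from $\nss_2$. Your proposed mechanism, averaging over the $\nss_1$-directions, is exactly what is used.

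On the execution of that key step: framing it as ``nilspace cohomology of $\ns''_p$ with $A_{(p)}$-coefficients is $|\ns''_p|$-torsion, hence vanishes'' plus a K\"unneth isomorphism asks for more than is available --- no K\"unneth theorem for nilspace cohomology is in the literature, and the general torsion statement is not established. The paper does not go through such abstractions; it works directly. It defines an operator $\mc{E}$ averaging $\rho$ over $\cu^{k+1}(\nss_1)$ (the coprimality of $|\nss_1|$ and $|\ab|$ is what makes $\ab$-valued averages over such sets meaningful), checks that $\mc{E}(\rho)$ is again a cocycle factoring through $\pi_2$, and then --- the real work --- shows $\rho-\mc{E}(\rho)$ is a coboundary by introducing a second, \emph{rooted} averaging operator $\mc{E}'$ (averaging only over cubes with a prescribed vertex at $0^{k+1}$) and comparing $\mc{E}$ and $\mc{E}'$ via a tricube argument. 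So your plan points in the right direction, but the coboundary step needs this concrete two-operator construction rather than an appeal to abstract vanishing.

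For uniqueness the paper takes a shorter route than your cohomology-class argument: it proves that any morphism between finite nilspaces of coprime orders is constant (an easy induction on the step of the target, using that morphisms into $\mc{D}_\ell(\ab)$ from a coprime-order nilspace are constant), whence any isomorphism $\prod_p\ns_p\to\prod_p\nss_p$ between products of $p$-nilspaces must carry each $\ns_p$ isomorphically onto $\nss_p$. Your approach would again depend on the unestablished K\"unneth isomorphism, and would further need to track how automorphisms of the base $\ns'$ and of $A$ act on cohomology classes; the paper's argument sidesteps all of this.
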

\noindent This is a nilspace-theoretic analogue of the recent ergodic-theoretic Sylow decomposition theorem \cite[Theorem 2.3]{JST-tot-dis}. The proofs of these two results are different, however, and we did not find a simple deduction of the former  from the latter (see Remark \ref{rem:JSTimplication}).

We prove Theorem \ref{thm:p-sylow-intro} in Section \ref{sec:primdecomp}. The main consequences of Theorem \ref{thm:p-sylow-intro} that we obtain can be described, generally speaking, as various ways to reduce higher-order Fourier analysis on bounded-torsion abelian groups to the case of abelian $p$-groups. We now substantiate this idea with the formal statements of these consequences, which we shall prove in Subsection \ref{Subsec:PrimDecompConseq}. 

The first consequence is the following version of the inverse theorem in the $m$-torsion setting. Recall from \cite[Definition 1.3]{CSinverse} the notion of a \emph{nilspace polynomial of degree $k$} on a finite abelian group $\ab$, namely a function $F\co\phi:\ab\to\mb{C}$ where $\phi$ is a morphism from $\mc{D}_1(\ab)$ to some $k$-step compact nilspace $\ns$ of finite rank (we denote the set of such morphisms by $\hom(\mc{D}_1(\ab),\ns)$; see \cite{Cand:Notes1}), and where $F$ is a 1-bounded function $\ns\to\mb{C}$. We say that $F\co\phi$ has \emph{complexity at most} $M$ if  $\ns$ has complexity at most $M$ as per \cite[Definition 1.2]{CSinverse}, and that $F\co\phi$ is a \emph{$p$-nilspace} (resp. \emph{$m$-torsion nilspace}) \emph{polynomial} if $\ns$ is a $p$-nilspace (resp. $m$-torsion nilspace).
\begin{theorem}\label{thm:invreduc-intro}
For every $k,m\in \mb{N}$ and $\delta > 0$, there exist  $\varepsilon>0$ and $M>0$ such that the following holds. Let $\ab$ be a finite $m$-torsion abelian group with primary decomposition $\ab=\bigoplus_{p\in \mc{P}} \ab_p$ for $\mc{P}:=\mc{P}(|\ab|)$, and let $f: \ab\to \mb{C}$ be a 1-bounded function with $\| f \|_{U^{k+1}} > \delta$. Then for each $p\in \mc{P}$ there is a $p$-nilspace polynomial $F_p\co \phi_p:\ab_p\to\mb{C}$ of complexity at most $M$ such that $|\mb{E}_{z\in \ab} f(z) \prod_{p\in \mc{P}} F_p\co \phi_p(z_p)| \geq \varepsilon$ \textup{(}where each $z\in\ab$ is written $(z_p)_{p\in \mc{P}}$ for $z_p\in \ab_p$\textup{)}.
\end{theorem}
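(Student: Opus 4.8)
The plan is to feed the $m$-torsion version of the inverse theorem into the primary decomposition of finite nilspaces. First I would apply Theorem~\ref{thm:nil-for-m-exp} to $f$: since $\ab$ is finite and $m$-torsion, this produces a constant $M=M(\delta,k,m)$, a \emph{finite} $m$-torsion nilspace $\ns$ of complexity at most $M$, a morphism $\phi\in\hom(\mc{D}_1(\ab),\ns)$, and a $1$-bounded $F:\ns\to\mb{C}$ with $|\mb{E}_{z\in\ab}f(z)\,F\co\phi(z)|\ge\varepsilon_0$ for some $\varepsilon_0=\varepsilon_0(\delta,k,m)>0$ (absorbing any complex conjugate into $F$). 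Since each structure group of $\ns$ is $m$-torsion of rank $O_M(1)$ and $\ns$ has $O(M)$ steps, the cardinality $|\ns|$ is bounded in terms of $M$ and $m$ only; this is what will make the final bound uniform.

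Next I would decompose both sides. By Theorem~\ref{thm:p-sylow-intro}, $\ns\cong\prod_{p\in\mc{Q}}\ns_p$ with $\mc{Q}:=\mc{P}(|\ns|)$ and each $\ns_p$ a finite $k$-step $p$-nilspace, of complexity at most $M$ (its structure groups are the $p$-primary parts of those of $\ns$). Writing $\phi=(\phi^{(p)})_{p\in\mc{Q}}$ with $\phi^{(p)}:=\pi_{\ns_p}\co\phi\in\hom(\mc{D}_1(\ab),\ns_p)$, the key point is that each $\phi^{(p)}$ factors through the projection $\ab\to\ab_p$. For this I would use the rigidity fact that a nilspace morphism from $\mc{D}_1(H)$ into a finite $p$-nilspace is constant whenever $p\nmid|H|$: writing $\ab=\ab_p\oplus H_p$ with $H_p:=\bigoplus_{q\ne p}\ab_q$ (so $p\nmid|H_p|$), for each $a\in\ab_p$ the map $h\mapsto\phi^{(p)}(a+h)$ is a morphism $\mc{D}_1(H_p)\to\ns_p$ (it is $\phi^{(p)}$ precomposed with the affine morphism $h\mapsto a+h$), hence constant; thus $\phi^{(p)}(z)=\phi_p(z_p)$ for every $z=(z_q)_q\in\ab$, where $\phi_p:=\phi^{(p)}\co\iota_p\in\hom(\mc{D}_1(\ab_p),\ns_p)$ and $\iota_p:\ab_p\hookrightarrow\ab$ is the inclusion. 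I would prove the rigidity fact by induction on the step of the target nilspace: projecting to the $(k-1)$-step factor and applying the inductive hypothesis shows the morphism takes values in a single fiber over the top structure group $Z_k$; after fixing a base point in that fiber it becomes a polynomial map of degree at most $k$ from $H$ into the $p$-group $Z_k$ (the fiber with its induced structure being a translate of $\mc{D}_k(Z_k)$), and subtracting its value at $0$ makes its $k$-fold iterated derivative at $0$ a symmetric $k$-additive form $H^k\to Z_k$, i.e.\ an element of $\mr{Hom}(H^{\otimes k},Z_k)$; this group is trivial because $H^{\otimes k}$ has exponent dividing $\exp(H)$ and hence order coprime to $p$, so the map has degree at most $k-1$ and one descends to a constant.

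With the factorization in hand, the last step is an expansion and a pigeonhole. Expanding $F$ over $\ns=\prod_{p\in\mc{Q}}\ns_p$ through indicator functions, $F=\sum_{x\in\ns}F(x)\prod_{p\in\mc{Q}}\mathbf{1}_{\{x_p\}}$ with each $\mathbf{1}_{\{x_p\}}:\ns_p\to\{0,1\}$ being $1$-bounded, so the factorization gives $F\co\phi(z)=\sum_{x\in\ns}F(x)\prod_{p\in\mc{Q}}\big(\mathbf{1}_{\{x_p\}}\co\phi_p\big)(z_p)$, whence
\[
\varepsilon_0\ \le\ \Big|\mb{E}_{z\in\ab}f(z)\,F\co\phi(z)\Big|\ \le\ \sum_{x\in\ns}|F(x)|\;\Big|\mb{E}_{z\in\ab}f(z)\prod_{p\in\mc{Q}}\big(\mathbf{1}_{\{x_p\}}\co\phi_p\big)(z_p)\Big|.
\]
Since $\sum_{x\in\ns}|F(x)|\le|\ns|=O_{M,m}(1)$, some $x^{*}\in\ns$ satisfies $\big|\mb{E}_{z\in\ab}f(z)\prod_{p\in\mc{Q}}(\mathbf{1}_{\{x^{*}_p\}}\co\phi_p)(z_p)\big|\ge\varepsilon_0/|\ns|=:\varepsilon$, which depends only on $\delta,k,m$. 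Setting $F_p:=\mathbf{1}_{\{x^{*}_p\}}$ with $\phi_p$ as above for $p\in\mc{Q}$, and $F_p\co\phi_p\equiv1$ over the one-point $p$-nilspace for $p\in\mc{P}\setminus\mc{Q}$, yields the desired $p$-nilspace polynomials of complexity at most $M$; one also observes that when $p\in\mc{Q}\setminus\mc{P}$ one has $\ab_p=0$, so the corresponding factor is the constant $\mathbf{1}_{\{x^{*}_p\}}(\phi_p(0))$, forced to equal $1$ by the choice of $x^{*}$, so that $\prod_{p\in\mc{Q}}(\mathbf{1}_{\{x^{*}_p\}}\co\phi_p)(z_p)=\prod_{p\in\mc{P}}F_p\co\phi_p(z_p)$.

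The main obstacle is the rigidity fact, and more precisely packaging its inductive proof cleanly: the reduction of a morphism on $\mc{D}_1(H)$ lying in a single top fiber to a degree-$\le k$ polynomial map into $Z_k$, together with the vanishing of $\mr{Hom}(H^{\otimes k},Z_k)$ under coprimality, are the substantive points; the remaining steps are routine bookkeeping, and the uniformity of $\varepsilon$ and $M$ is automatic once $|\ns|$ is controlled by $M$ and $m$.
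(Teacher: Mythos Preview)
Your proof is correct and follows essentially the same route as the paper: apply Theorem~\ref{thm:nil-for-m-exp}, decompose the resulting nilspace via Theorem~\ref{thm:p-sylow-intro}, split the morphism along the primary components, expand $F$ as a sum of rank-1 indicator products, and pigeonhole. The one noteworthy difference is in the rigidity step: the paper invokes the general Lemma~\ref{lem:coprime-implies-cte-2} (whose proof goes through Corollary~\ref{cor:fin-ns-quot-modN} and hence the machinery of \cite{CGSS-doublecoset}), whereas your direct argument---induction on the step, then killing the top multilinear form via $\mathrm{Hom}(H^{\otimes k},Z_k)=0$ under coprimality---is more elementary and entirely sufficient for the case at hand, where the domain is $\mc{D}_1(H)$; your bookkeeping for the discrepancy between $\mc{P}(|\ab|)$ and $\mc{P}(|\ns|)$ is also a touch more explicit than the paper's.
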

\noindent We shall in fact prove a refined version of this result which adds equidistribution control (in the form of the \emph{balance} property) to the nilspace polynomials; see Theorem \ref{thm:invreduc}. Using these results, we reduce Question \ref{Q:JST} to the following question, which focuses on $p$-groups.
\begin{question}\label{mainQ:p-case}
Let $p$ be a prime, let $\ab$ be a finite abelian $p$-group, and let $\phi$ be a morphism from $\mc{D}_1 (\ab)$ to some $k$-step finite  $p$-nilspace $\ns$. If $\phi$ is sufficiently balanced, must there  exist a $k$-step finite abelian \emph{group} nilspace\footnote{i.e.\ $\nss$ is a finite abelian group equipped with the Host--Kra cubes associated with a filtration of degree $k$ on $\nss$.} $\nss$, of cardinality $|\nss|=O_{|\ns|,k,p}(1)$, a morphism $g:\mc{D}_1(\ab)\to\nss$, and a function $h:\nss\to\ns$ \textup{(}not required to be a morphism\textup{)}, such that $\phi=h\co g$?
\end{question}
\noindent A positive answer to Question \ref{mainQ:p-case} would give an extension for $p$-groups of known results for vector spaces $\mb{F}_p^n$, such as \cite[Theorem 1.7]{CGSS-p-hom} (the balance assumption in Question \ref{mainQ:p-case} may be unnecessary). The reduction of Question \ref{Q:JST} to Question \ref{mainQ:p-case} is detailed in Corollary \ref{cor:Q1.9reduc}. Note that, as a particular consequence of this reduction, we can answer  Question \ref{Q:JST} positively in the \emph{squarefree case}, i.e.\ when $m$ is the product of distinct primes; see Corollary \ref{cor:inv-disct-primes}.

Another consequence of Theorem \ref{thm:p-sylow-intro} is a surprising connection between the uniformity norms and generalizations of the \emph{cut norm} on direct sums of abelian groups of coprime orders. Let us recall the definition of these cut norms from \cite[Definition 4.5]{Cas} (see also \cite{ConLee}).
\begin{defn}[$(n,d)$-cut norm]\label{def:cn}
Let $A_1,\ldots,A_n$ be finite sets and let $d\in [1, n-1]$ be an integer. For any $f:\prod_{i=1}^n A_i\to \mb{C}$, the \emph{$(n,d)$-cut norm} of $f$ is defined as follows:
\begin{eqnarray*}
    & \|f \|_{\Box_d^n}:=\max_{u_B:\prod_{i\in B} A_i \to \mb{D}, \,\forall B\in \binom{[n]}{d}} \left|\mb{E}_{\underline{a}\in \prod_{i=1}^n A_i}  f(\underline{a})\prod_{B\in \binom{[n]}{d}} \overline{u_B(\underline{a}_B)}\right|, &
\end{eqnarray*}
where $\mb{D}$ is the unit disc in $\mb{C}$ and $\underline{a}_B$ is the projection of $\underline{a}$ to the coordinates indexed by $B$.
\end{defn}
\noindent These norms are polynomially equivalent to averages of $f$ over certain hypergraphs (see \cite[Lemma 5.8]{ConLee} or \cite[Theorem 4.11]{Cas}). In particular the classical cut norm (i.e.\ the $(2,1)$-cut norm) is polynomially equivalent to the 4-cycle norm (or Gowers 2-box norm) $\|f \|_{\Box(A\times B)}^{4}=\mb{E}_{a_1,a_2\in A,b_1,b_2\in B} f(a_1,b_1)\overline{f(a_2,b_1)}\overline{f(a_1,b_2)}f(a_2,b_2)$. The connection in question is the following.

\begin{corollary}\label{cor:box-norm-estimate}
Let $k\in \mb{N}$ and let $m_1,\ldots,m_n\in \mb{N}$ be pairwise coprime. Then for any $\delta>0$ there exists $\varepsilon=\varepsilon(\delta,m_1,\ldots,m_n,k)>0$ such that the following holds. For each $i\in [n]$ let $\ab_i$ be a finite abelian group of torsion $m_i$, and let $f:\prod_{i=1}^n \ab_i\to \mb{C}$ be a 1-bounded function satisfying $\|f\|_{U^{k+1}}>\delta$. Then $\|f \|_{\Box^n_1}>\varepsilon$. 
\end{corollary}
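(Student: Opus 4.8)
The plan is to deduce this directly from Theorem \ref{thm:invreduc-intro}, the point being that pairwise coprimality of the $m_i$ forces the primary decomposition of the product group $\ab:=\prod_{i=1}^n \ab_i$ to be compatible with the given coordinate decomposition. We may assume $n\geq 2$, since otherwise the $(n,1)$-cut norm is not defined. Set $m:=\prod_{i=1}^n m_i$, which equals $\lcm(m_1,\ldots,m_n)$ by coprimality, so that $\ab$ is a finite $m$-torsion abelian group. For each $i\in[n]$ put $\mc{P}_i:=\mc{P}(|\ab_i|)\subseteq\mc{P}(m_i)$. Since $\gcd(m_i,m_j)=1$ for $i\neq j$, the sets $\mc{P}_i$ are pairwise disjoint, hence $\mc{P}(|\ab|)=\bigsqcup_{i=1}^n\mc{P}_i$, and the primary decomposition $\ab=\bigoplus_{p\in\mc{P}(|\ab|)}\ab_p$ satisfies $\ab_i=\bigoplus_{p\in\mc{P}_i}\ab_p$ for each $i$. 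In particular, for every $p\in\mc{P}(|\ab|)$ there is a unique index $i=i(p)$ with $\ab_p\leq\ab_{i(p)}$, and under the identification $\ab\cong\prod_{i=1}^n\ab_i$ the $p$-primary component $z_p$ of an element $z=\underline{a}=(a_1,\ldots,a_n)$ is the $p$-primary component of $a_{i(p)}$; in particular $z_p$ depends only on the coordinate $a_{i(p)}$.

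Next I would apply Theorem \ref{thm:invreduc-intro} to $f$ with parameters $k$, $m$ and $\delta$. As $\|f\|_{U^{k+1}}>\delta$, this yields $\varepsilon=\varepsilon(\delta,k,m)>0$ and $M=M(\delta,k,m)>0$, together with, for each $p\in\mc{P}(|\ab|)$, a $p$-nilspace polynomial $F_p\co\phi_p:\ab_p\to\mb{C}$ of complexity at most $M$, such that
\[
\Bigl|\,\mb{E}_{z\in\ab}\,f(z)\prod_{p\in\mc{P}(|\ab|)}F_p\co\phi_p(z_p)\Bigr|\;\geq\;\varepsilon .
\]
Since $m=\prod_i m_i$, the number $\varepsilon$ depends only on $\delta,k$ and $m_1,\ldots,m_n$, as required.

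It then remains to reorganize the correlating function into a product of functions of the individual coordinates. For each $i\in[n]$ define $g_i:\ab_i\to\mb{C}$ by $g_i(a_i):=\prod_{p\in\mc{P}_i}F_p\co\phi_p\bigl((a_i)_p\bigr)$, where $(a_i)_p$ denotes the $p$-primary component of $a_i$. Each $F_p$ is $1$-bounded, hence so is $g_i$, and therefore $u_i:=\overline{g_i}$ is a function $\ab_i\to\mb{D}$. By the first paragraph, for $z=\underline{a}$ we have $\prod_{p\in\mc{P}(|\ab|)}F_p\co\phi_p(z_p)=\prod_{i=1}^n g_i(a_i)=\prod_{i=1}^n\overline{u_i(a_i)}$, so the displayed inequality reads $\bigl|\mb{E}_{\underline{a}\in\ab}\,f(\underline{a})\prod_{i=1}^n\overline{u_i(a_i)}\bigr|\geq\varepsilon$. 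Taking $u_{\{i\}}:=u_i$ for each $i\in[n]$ in Definition \ref{def:cn} (recall $\underline{a}_{\{i\}}=a_i$), this shows $\|f\|_{\Box^n_1}\geq\varepsilon$, completing the proof.

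I do not expect a serious obstacle here once Theorem \ref{thm:invreduc-intro} is available: the argument is essentially bookkeeping. The one point that must be handled with care is the structural observation in the first paragraph, namely that coprimality of the $m_i$ makes the primary decomposition of $\ab$ refine the product decomposition, so that each primary factor $\ab_p$ — and hence each nilspace polynomial $F_p\co\phi_p$ — is localized to a single coordinate $\ab_{i(p)}$. This localization is precisely what lets the prime-indexed product in Theorem \ref{thm:invreduc-intro} be rewritten as the coordinate-indexed product that the $(n,1)$-cut norm can absorb.
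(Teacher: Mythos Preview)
Your proof is correct and follows essentially the same route as the paper: apply Theorem \ref{thm:invreduc-intro} with $m=\prod_i m_i$, use pairwise coprimality to see that each prime $p\in\mc{P}(|\ab|)$ belongs to a unique coordinate $\ab_{i(p)}$, and then group the prime-indexed product $\prod_p F_p\co\phi_p(z_p)$ into a coordinate-indexed product $\prod_i u_i(a_i)$ to conclude via Definition \ref{def:cn}. Your write-up is somewhat more careful with the bookkeeping (e.g.\ the observation that $n\geq 2$ is needed for $\|\cdot\|_{\Box^n_1}$ to be defined, and the explicit identification of $z_p$ with $(a_{i(p)})_p$), but the underlying argument is the same.
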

\noindent In particular, if $\ab_1,\ab_2$ are finite abelian groups of coprime orders, then for every $k$,   smallness of $f:\ab_1\times\ab_2\to\mb{D}$ in the 4-cycle norm $\|\cdot \|_{\Box(\ab_1\times\ab_2)}$ implies smallness of $\|f\|_{U^{k+1}(\ab_1\times\ab_2)}$. Note that this result fails without the coprimality condition. Indeed, letting $\ab_1=\ab_2=\mb{F}_2^\ell$, the function $f: (x,y)\mapsto e(\frac{1}{2}\sum_{i=1}^\ell x_i y_i)$ on $\ab_1\times \ab_2$ satisfies $\|f\|_{U^3}=1$ but $\|f\|_{\Box(\ab_1\times \ab_2)}= 2^{-\ell/4}$.

\medskip

\noindent After proving the above results, we focus on the first non-trivial case of Question \ref{Q:JST}, namely for $k=2$ (the case $k=1$ is easily proved using standard Fourier analysis). Our main result here is the following affirmative answer in this case, proved in Section \ref{sec:proof-of-main}.
\begin{theorem}\label{thm:mainU3}
For every $m\in\mb{N}$ and $\delta > 0$, there exists $\varepsilon = \varepsilon(m,\delta)$ such that for every finite $m$-torsion abelian group $\ab$ and any 1-bounded function $f: \ab\to \mb{C}$ with $\| f \|_{U^3} > \delta$, there exists a polynomial $P:\ab\to\mb{R}/\mb{Z}$ of degree at most 2 such that
$|\mb{E}_{x\in \ab} f (x)e(-P(x))| >\varepsilon$.
\end{theorem}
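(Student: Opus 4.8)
The plan is to combine the primary decomposition machinery (Theorem \ref{thm:invreduc-intro}) with a direct analysis of the $2$-step $p$-nilspaces that arise. By Theorem \ref{thm:invreduc-intro} applied with $k=2$, after writing the primary decomposition $\ab=\bigoplus_{p\in\mc{P}}\ab_p$, the hypothesis $\|f\|_{U^3}>\delta$ yields, for each $p\in\mc{P}$, a $2$-step $p$-nilspace polynomial $F_p\co\phi_p:\ab_p\to\mb{C}$ of complexity at most $M=M(\delta,m)$ such that $|\mb{E}_{z\in\ab}f(z)\prod_{p\in\mc{P}}F_p\co\phi_p(z_p)|\geq\varepsilon_0$. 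The task then reduces to the following: for a finite abelian $p$-group $\ab_p$ and a $2$-step finite $p$-nilspace $\ns$ of bounded complexity, and a morphism $\phi_p\in\hom(\mc{D}_1(\ab_p),\ns)$, show that the pullback of any $1$-bounded function on $\ns$ along $\phi_p$ correlates, at a level bounded below in terms of $|\ns|$, with $e(-P_p)$ for a degree-$2$ polynomial $P_p:\ab_p\to\mb{R}/\mb{Z}$. Having this for each $p$, one multiplies: the product $\prod_p e(-P_p(z_p))$ is $e(-P(z))$ for the degree-$2$ polynomial $P(z)=\sum_p P_p(z_p)$ on $\ab$ (a sum of pullbacks of degree-$\leq 2$ polynomials along the coordinate projections is degree $\leq 2$), and by a pigeonhole/averaging argument on the product structure one converts the single correlation $\geq\varepsilon_0$ into simultaneous correlations of each $f$-slice, giving $|\mb{E}_{z}f(z)e(-P(z))|\geq\varepsilon$.

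For the core step on a single $2$-step $p$-nilspace $\ns$, I would exploit the structure of $2$-step nilspaces: $\ns$ fibers over its $1$-step part $\ns_1$ (an abelian group, here a finite abelian $p$-group $Z_1$) with fibers torsors over the second structure group $Z_2$ (a finite abelian $p$-group). The key is that $\phi_p:\mc{D}_1(\ab_p)\to\ns$, composed with the projection $\ns\to\ns_1=\mc{D}_1(Z_1)$, is just a group homomorphism $\ab_p\to Z_1$; and the obstruction to lifting a section is governed by a $2$-cocycle, so $\phi_p$ itself is described, after choosing a cross-section $\ns_1\to\ns$, by a map $\ab_p\to Z_1$ together with a "quadratic" correction valued in $Z_2$. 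Concretely, any $1$-bounded $F$ on $\ns$ can be Fourier-expanded in the $Z_2$-fibers: $F(x)=\sum_{\xi}\hat F_\xi(\pi(x))\,\xi(\cdots)$ where $\xi$ ranges over characters of $Z_2$; pulling back along $\phi_p$, each term becomes $($character of $\ab_p)\times e(q_\xi)$ where $q_\xi:\ab_p\to\mb{R}/\mb{Z}$ is a genuine polynomial of degree $\leq 2$ coming from the $p$-nilspace cocycle. Since the number of such terms is $O_{|\ns|}(1)$, pigeonhole picks out one term carrying a proportionate share of the correlation, and that term is of the desired form $e(-P_p)$ up to modifying by a linear phase (degree $1\leq 2$). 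The fact that $\ns$ is a $p$-nilspace is used to guarantee that the cocycle can be represented by an honest $\mb{R}/\mb{Z}$-valued polynomial of degree $2$ (rather than only a degree-$C$ polynomial as in the general case); this is precisely the place where one needs that the structure group is a $p$-group, so that the relevant coordinates are $p$-power-torsion and one can invoke the $\mb{F}_p^n$-type description of degree-$2$ phase polynomials, e.g.\ via \cite[Theorem 1.7]{CGSS-p-hom} or its consequences, extended to finite abelian $p$-groups.

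The main obstacle I anticipate is precisely this last point: verifying that on a finite abelian $p$-group the relevant $2$-step $p$-nilspace cocycles are always realizable by degree-$2$ polynomials into $\mb{R}/\mb{Z}$ with the bound on the "bracket" or "non-classical" part controlled. Over $\mb{F}_p^n$ with $p>2$ this is classical (every degree-$2$ phase polynomial is classical), but over a general finite abelian $p$-group (and especially for $p=2$, where non-classical quadratic phases genuinely occur, cf.\ the example $e(\tfrac14 x^2)$) one must carefully track that, even though non-classical phases appear, they are still captured by \emph{polynomials of degree $\leq 2$} in the sense defined in the excerpt (iterated discrete derivatives vanishing after three steps), which non-classical quadratics indeed are. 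So the subtlety is bookkeeping rather than a genuine barrier: one shows the cocycle defining the $2$-step $p$-nilspace, pulled back to $\ab_p$, lands in the space of maps whose third derivative vanishes, hence is a degree-$2$ polynomial, and then the Fourier-pigeonhole argument closes everything. A secondary technical point is to arrange the quantitative dependence so that $\varepsilon$ depends only on $m$ and $\delta$: this is handled because the complexity bound $M$ from Theorem \ref{thm:invreduc-intro} depends only on $(k,m,\delta)=(2,m,\delta)$, so $|\ns|$ and hence all pigeonhole losses are bounded in terms of $m$ and $\delta$ alone.
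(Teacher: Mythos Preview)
Your overall framework matches the paper's: apply Theorem \ref{thm:invreduc-intro} with $k=2$ to reduce to $p$-groups, then argue that each $F_p\co\phi_p$ can be resolved into degree-$2$ polynomial phases. The gap is precisely at the step you flag as ``bookkeeping rather than a genuine barrier'': the claim that, after choosing a cross-section $s:\ns_1\to\ns$, the resulting fibre map $q(z)=\phi_p(z)-s(\alpha(z))$ (and hence each $\xi\co q$) is a polynomial of degree $\leq 2$ on $\ab_p$. This is not true in general. The third discrete derivative of $q$ is governed by the nilspace $2$-cocycle $\rho$ attached to $s$: one has $\partial_{h_1}\partial_{h_2}\partial_{h_3}q(z)=\rho(\alpha\co c)$ for the $3$-cube $c$ with base point $z$ and increments $h_i$, and there is no reason for this to vanish for an arbitrary cross-section. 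What one does get is that $\partial^3 q$ factors through the bounded group $Z_1$, which forces $q$ to be a polynomial of some \emph{bounded} degree; that route recovers Theorem \ref{thm:JSTbt} (degree $C(2,m)$), not the optimal degree $2$ you are after. Your fallback of invoking \cite[Theorem 1.7]{CGSS-p-hom} ``extended to finite abelian $p$-groups'' is exactly the missing content: that theorem treats $p$-homogeneous nilspaces, corresponding to \emph{elementary} abelian $p$-groups, and its extension to general abelian $p$-groups is not automatic.

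The paper supplies a new ingredient here rather than a direct vertical Fourier argument. It observes that on each coset of $H_p:=\ker(\pi_1\co\phi_p-\pi_1\co\phi_p(0))$ the map $\phi_p$ genuinely lands in a single fibre and hence is a morphism $\mc{D}_1(H_p)\to\mc{D}_2(\ab_2(\ns_p))$, i.e.\ a bona fide degree-$2$ polynomial. The difficulty is then to extend these coset-wise degree-$2$ maps to a global one, and the paper's Remark after Theorem \ref{thm:mainextthm} gives an explicit example (on $\mb{Z}_p\times\mb{Z}_{p^2}$) showing that a \emph{direct} extension can fail. The substitute is Proposition \ref{prop:comp1}: inside any bounded-index subgroup $H_p$ of a $p^e$-torsion group there is a bounded-index subgroup $H_p'$ that admits a complement in $\ab_p$. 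With a complement in hand one can project $\ab_p\to H_p'$ along cosets and thereby produce, for each coset representative, a global morphism $\mc{D}_1(\ab_p)\to\mc{D}_2(C)$ that \emph{refines} the corresponding coset-wise map (Theorem \ref{thm:mainextthm}). Bundling these together with $\pi_1\co\phi_p$ yields a morphism from $\mc{D}_1(\ab_p)$ into a bounded-size \emph{abelian group} nilspace refining $\phi_p$, after which the Fourier/pigeonhole step you described is legitimate. In short, your sketch would go through once you replace the asserted ``$q$ is degree $2$'' by the complemented-subgroup argument of Proposition \ref{prop:comp1} and Theorem \ref{thm:mainextthm}.
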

\noindent The main new ingredient in our proof of this theorem is an algebraic result which, roughly speaking, enables us to replace a given bounded-index subgroup $H$ of $\ab$ by a bounded-index subgroup $H'\leq H$ with the added property that $H'$ has an additive complement in $\ab$; see Proposition \ref{prop:comp1}. We obtain Theorem \ref{thm:mainU3} basically by combining this ingredient with Theorem \ref{thm:invreduc-intro}. See also Remark \ref{rem:AltPfs} for another way to use the above ingredient to obtain Theorem \ref{thm:mainU3}, observed a posteriori by Jamneshan, Shalom and Tao (in personal communication).

For $k>2$, we make progress on Question \ref{Q:JST} by proving an inverse theorem for the $U^{k+1}$-norm on $m$-torsion groups, with correlating functions being polynomial objects of \emph{optimal} degree $k$, which are not quite polynomial phase functions, but somewhat more general functions called \emph{projected phase polynomials}. These  were already seen to play a useful role in this inverse theory in the unpublished work \cite{SzegFin}. We recall their definition here (see \cite[Definition 1.2]{SzegFin}). 
\begin{defn}\label{def:propolyphase}
Let $\ab$ be a finite abelian group and let $k\in \mb{N}$. A \emph{projected phase polynomial of degree $k$} on $\ab$ is a 1-bounded function $\phi_{*\tau}:\ab\to\mb{C}$ of the following form. There is a finite abelian group $\ab'$, a surjective homomorphism $\tau:\ab'\to \ab$, and a polynomial phase function $\phi:\ab'\to\mb{C}$ of degree at most $k$, such that $\phi_{*\tau}(x) = \mb{E}_{y\in \tau^{-1}(x)}\phi(y)$ for every $x\in\ab$.  
\end{defn}
\noindent Thus $\phi_{*\tau}\co\tau =\mb{E}( \phi|\tau)$, with the expectation taken relative to the partition of $\ab'$ induced by $\tau$. (The notation $\phi_{*\tau}$ is inspired by the one for the image of a measure under a measurable map $\tau$.)
\begin{defn}
Let $\phi_{*\tau}$ be a projected phase polynomial of degree $k$ on some finite abelian group $\ab$, for some surjective homomorphism $\tau:\ab'\to \ab$. If the torsions of $\ab$ and $\ab'$ are $m$ and $m'$ respectively, then we say that $\phi_{*\tau}$ has \emph{torsion}  $(m,m')$. We say that $\phi_{*\tau}$ is \emph{rank-preserving} if $\textrm{rk}(\ab')=\textrm{rk}(\ab)$ (where the \emph{rank} $\textrm{rk}(\ab)$ is the minimum cardinality of a generating set for $\ab$).
\end{defn}
\begin{theorem}\label{thm:invboundedtor}
Let $k,m$ be positive integers and let $\delta>0$. Then there exists $\gamma=\gamma(k)\in \mb{N}$ and $\varepsilon=\varepsilon(\delta,k,m)>0$ such that the following holds. For any $m$-torsion abelian group $\ab$ and any 1-bounded function $f:\ab\to \mb{C}$ with $\|f\|_{U^{k+1}}\ge \delta$, there exists a rank-preserving projected phase polynomial $\phi_{*\tau}$ of degree $k$ and torsion $(m,m^\gamma)$ on $\ab$ such that $|\mb{E}_{x\in\ab} f(x) \overline{\phi_{*\tau}(x)}|\ge \varepsilon$. 
\end{theorem}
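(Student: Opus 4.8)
The plan is to combine the nilspace form of the inverse theorem, Theorem~\ref{thm:invreduc-intro}, with a structural fact: a morphism from $\mc{D}_1$ of a finite abelian $p$-group into a finite $p$-nilspace becomes, after pulling back along a controlled $p$-power cover, the composite of an \emph{ordinary} polynomial map into a group nilspace with some function on that group nilspace. Polynomial phases of degree $\le k$ on the cover then arise by ordinary Fourier analysis on the underlying finite group of that group nilspace, and pushing them forward along the cover yields projected phase polynomials of degree $k$ on $\ab$.

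First I would apply Theorem~\ref{thm:invreduc-intro} to the primary decomposition $\ab=\bigoplus_{p\in\mc{P}}\ab_p$, with $\mc{P}=\mc{P}(|\ab|)\subseteq\mc{P}(m)$: from $\|f\|_{U^{k+1}}\ge\delta$ one obtains, for each $p\in\mc{P}$, a $p$-nilspace polynomial $F_p\co\phi_p:\ab_p\to\mb{C}$ of complexity at most $M=M(\delta,k,m)$, with $\phi_p\in\hom(\mc{D}_1(\ab_p),\ns_p)$ for a finite $k$-step $p$-nilspace $\ns_p$, such that $\bigl|\mb{E}_{z\in\ab}f(z)\prod_{p\in\mc{P}}F_p\co\phi_p(z_p)\bigr|\ge\varepsilon_0=\varepsilon_0(\delta,k,m)$. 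Replacing $\ns_p$ by the image subnilspace $\phi_p(\mc{D}_1(\ab_p))$ we may assume $\phi_p$ surjective; then $\ns_p$ is a finite $k$-step $m$-torsion nilspace of bounded complexity, hence of cardinality bounded in terms of $\delta,k,m$, and (since $m$, hence $|\mc{P}|$, is fixed) so is $\prod_{p\in\mc{P}}|\ns_p|$.

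The heart of the argument — and the step I expect to be the main obstacle — is the following: there is $\gamma_0=\gamma_0(k)\in\mb{N}$ such that every surjective morphism $\phi:\mc{D}_1(B)\to\ns$ from $\mc{D}_1$ of a finite abelian $p$-group $B$ onto a finite $k$-step $p$-nilspace $\ns$ of bounded cardinality admits a finite abelian $p$-group $B'$ and a surjective homomorphism $\tau:B'\to B$ obtained by raising the order of each cyclic factor of $B$ by a factor $p^{e}$ with $e\le\gamma_0$ (so $\tau$ is rank-preserving, and $B'$ has torsion dividing $p^{\gamma_0}$ times the torsion of $B$), together with a finite $k$-step group $p$-nilspace $\mc{G}=(G,(G_i))$ (i.e.\ a finite abelian $p$-group with a degree-$k$ filtration) of bounded cardinality, a nilspace fibration $q:\mc{G}\to\ns$, and a morphism $\psi:\mc{D}_1(B')\to\mc{G}$, satisfying $q\co\psi=\phi\co\tau$. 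I would prove this by induction on the number of steps of $\ns$, building $\mc{G}$ as an iterated extension mirroring the structure tower of $\ns$: at the $j$-th stage, extending the partial lift from $\ns_{j-1}$ to $\ns_j$ reduces to solving a cocycle equation whose solvability is governed by an obstruction class that need not vanish over the group constructed so far, but which is killed after (i) enlarging the $j$-th structure group by a $p$-group of exponent bounded in terms of $j$, and (ii) pulling the base back along a further $p$-power cover raising each cyclic exponent by at most a function of $j$. The crucial uniformity — the delicate point of the whole proof — is that these obstruction groups have exponent bounded purely in terms of the step $j$, not in terms of $|\ns|$, since they are assembled from degree-$\le j$ cohomological data; summing the bounded exponent increases over the at most $k$ stages gives $\gamma_0=\gamma_0(k)$, while thickening cyclic factors leaves the rank unchanged and keeps $|G|$ bounded. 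Here I expect to adapt and extend the nilspace machinery of \cite{Cand:Notes1,CamSzeg,GMV1,GMV2} and the methods of \cite{SzegFin}.

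Granting this, I would finish as follows. Applying the structural statement to each $\phi_p$ gives $\ab_p'$, $\tau_p$, $\mc{G}_p=(G_p,(G_{p,i}))$, $q_p$, $\psi_p$ with $F_p\co\phi_p\co\tau_p=(F_p\co q_p)\co\psi_p$, where $F_p\co q_p:G_p\to\mb{C}$ is $1$-bounded and $|G_p|$ is bounded in terms of $\delta,k,m$. Since $G_{p,k+1}=0$, the nilspace morphism $\psi_p$ is an ordinary polynomial map $\ab_p'\to G_p$ of degree at most $k$, so $\chi\co\psi_p$ is a polynomial phase of degree $\le k$ on $\ab_p'$ for each character $\chi\in\widehat{G_p}$; expanding $F_p\co q_p$ in characters of $G_p$ thus writes $(F_p\co q_p)\co\psi_p$ as a bounded linear combination of polynomial phases of degree $\le k$ on $\ab_p'$. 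Set $\ab':=\bigoplus_{p\in\mc{P}}\ab_p'$ and $\tau:=\bigoplus_{p\in\mc{P}}\tau_p:\ab'\to\ab$, a surjective homomorphism with all fibres of equal size. Pulling $f$ back along $\tau$,
\[
\Bigl|\mb{E}_{y\in\ab'}f(\tau(y))\prod_{p\in\mc{P}}\bigl((F_p\co q_p)\co\psi_p\bigr)(y_p)\Bigr|=\Bigl|\mb{E}_{z\in\ab}f(z)\prod_{p\in\mc{P}}F_p\co\phi_p(z_p)\Bigr|\ge\varepsilon_0 ,
\]
and expanding the left-hand side over the boundedly many tuples of characters (using that $|\mc{P}|\le|\mc{P}(m)|$ is fixed) exhibits it as a bounded linear combination of quantities $\mb{E}_{y\in\ab'}f(\tau(y))\,e(-Q(y))$ with $Q:\ab'\to\mb{R}/\mb{Z}$ a polynomial of degree $\le k$. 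By the pigeonhole principle some such quantity has modulus at least some $\varepsilon=\varepsilon(\delta,k,m)>0$; for that $Q$, letting $\phi_{*\tau}(x):=\mb{E}_{y\in\tau^{-1}(x)}e(Q(y))$ be the associated projected phase polynomial of degree $k$ on $\ab$, the surjectivity of $\tau$ gives $|\mb{E}_{x\in\ab}f(x)\overline{\phi_{*\tau}(x)}|=|\mb{E}_{y\in\ab'}f(\tau(y))e(-Q(y))|\ge\varepsilon$. Finally $\textrm{rk}(\ab')=\max_{p\in\mc{P}}\textrm{rk}(\ab_p')=\max_{p\in\mc{P}}\textrm{rk}(\ab_p)=\textrm{rk}(\ab)$, so $\phi_{*\tau}$ is rank-preserving, and the torsion of $\ab'=\bigoplus_p\ab_p'$ divides $m^{\gamma_0(k)+1}$; taking $\gamma:=\gamma_0(k)+1$ proves the theorem.
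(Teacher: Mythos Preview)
Your outline is plausible in shape, but the part you flag as ``the main obstacle'' is indeed a gap: the structural lifting lemma (given $\phi:\mc{D}_1(B)\to\ns$ with $B$ a $p$-group and $\ns$ a finite $p$-nilspace, produce a controlled cover $\tau:B'\to B$, a bounded group nilspace $\mc{G}$, a fibration $q:\mc{G}\to\ns$, and $\psi$ with $q\co\psi=\phi\co\tau$) is only sketched. The crux --- that the cohomological obstruction classes at each step have exponent bounded solely in terms of the step, and that they can be killed by raising cyclic exponents by a $k$-bounded amount --- is asserted without argument. This is precisely the nontrivial content, and your references to \cite{CamSzeg,Cand:Notes1,GMV1,GMV2,SzegFin} do not supply it in that form. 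Your additional requirement that $B'$ be obtained specifically by raising the orders of the cyclic factors of $B$ is also not needed for the theorem and adds extra work.

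The paper's proof avoids this obstacle entirely by a cleaner route. It does \emph{not} use the primary decomposition (Theorem~\ref{thm:invreduc-intro}); it starts instead from Theorem~\ref{thm:nil-for-m-exp}, obtaining a morphism $\phi:\mc{D}_1(\ab)\to\ns$ into a finite $m$-torsion nilspace. The key structural input is Corollary~\ref{cor:fin-ns-quot-modN}: every such $\ns$ admits a fibration $\varphi:\nss\to\ns$ from a group nilspace $\nss=\prod_{i=1}^k\mc{D}_i(\mb{Z}_{m^\gamma}^{a_i})$ with $\gamma=\gamma(k)$ (this follows from the free-nilspace fibration theorem of \cite{CGSS-doublecoset} together with the periodicity results Lemmas~\ref{lem:perio-1}--\ref{lem:perio-2} and Corollary~\ref{cor:periodicfactor}). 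Now take $\tilde\tau:\mb{Z}^r\twoheadrightarrow\ab$ with $r=\textrm{rk}(\ab)$; since $\mc{D}_1(\mb{Z}^r)$ is free in the relevant sense (\cite[Corollary~A.6]{CGSS-p-hom}), the composite $\phi\co\tilde\tau$ lifts through $\varphi$ with no obstruction whatsoever to some $g:\mc{D}_1(\mb{Z}^r)\to\nss$. Periodicity (Corollary~\ref{cor:periodicfactor} again) then forces $g$ to descend to $\psi:\mc{D}_1(\mb{Z}_{m^\gamma}^r)\to\nss$, and $\tilde\tau$ to $\tau:\mb{Z}_{m^\gamma}^r\to\ab$. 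Fourier analysis on the abelian group underlying $\nss$ finishes as in your Part~3.

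In short, the idea you are missing is: rather than lifting from the $p$-group $B$ (where genuine obstructions live and must be analyzed), lift from the \emph{free} group $\mb{Z}^r$ (where lifting through any fibration is automatic) and then use the periodicity of morphisms into $m$-torsion nilspaces to descend to $\mb{Z}_{m^\gamma}^r$. This gives the rank-preserving cover and the $(m,m^\gamma)$ torsion directly, with $\gamma$ depending only on $k$, and renders the per-prime splitting and the inductive cohomological argument unnecessary.
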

\noindent We prove this in Section \ref{sec:prophase}, where we also treat two other aspects that further illustrate the relevance of projected phase polynomials for the $m$-torsion setting. Firstly, we show that projected phase polynomials of degree $k$ are \emph{genuine} obstructions to Gowers uniformity of degree $k$, thus establishing a tightness in Theorem \ref{thm:invboundedtor} which is not present in $U^{k+1}$-inverse theorems that use polynomials of degree possibly larger than $k$; see Proposition \ref{prop:propolyobstruct}. Secondly, we prove that Theorem \ref{thm:invboundedtor} implies Theorem \ref{thm:JSTbt}, by showing that a projected phase polynomial of degree $k$ can be expressed as an average of genuine phase polynomials of some degree $C(k,m)$.

The paper ends with questions suggesting routes for further progress on \cite[Question 1.9]{JST-tot-dis}.

\vspace*{0.2cm}

\noindent \textbf{Acknowledgements}. All authors used funding from project PID2020-113350GB-I00 of Spain’s MICINN.
The second-named author received funding from projects KPP 133921 and Momentum
(Lend\"ulet) 30003 of the Hungarian Government. The research was also supported partially
by the NKFIH ``\'Elvonal” KKP 133921 grant and by the Hungarian Ministry of
Innovation and Technology NRDI Office in the framework of the Artificial Intelligence
National Laboratory Program. We thank Asgar Jamneshan, Or Shalom and Terence Tao for useful comments on the first version of this paper.

\section{An inverse theorem with bounded-torsion nilspaces}\label{sec:nil-bnd-tor}

\noindent As mentioned in the introduction, we say that a nilspace $\ns$ is \emph{$m$-torsion} if every structure group of $\ns$ is an $m$-torsion group. 

In this section we record the fact that when the general inverse theorem \cite[Theorem 5.2]{CSinverse} is specialized to the case of $m$-torsion abelian groups, then the resulting nilspace is an $m$-torsion compact (hence finite) nilspace. To this end we gather the following two main ingredients. 

The first ingredient is \cite[Theorem 5.2]{CSinverse}, which we state below for convenience. Recall from \cite[Definition 5.1]{CSinverse} the notion of \emph{balance} for a morphism $\phi:\ns\to \nss$ between two compact $k$-step nilspaces $\ns,\nss$. Recall also the notion of \emph{complexity} of a compact finite-rank (\textsc{cfr}) nilspace $\nss$, denoted Comp$(\nss)$, underlying the complexity of a nilspace polynomial (see \cite[Definitions 1.2 and 1.3]{CSinverse}). 
\begin{theorem}\label{thm:gen-inverse}
Let $k\in \mb{N}$, and let $b:\mb{R}_{>0}\to \mb{R}_{>0}$ be an arbitrary function. For every $\delta\in (0,1]$ there is $M>0$ such that for every \textsc{cfr} coset nilspace $\ns$, and every 1-bounded Borel function $f:\ns\to \mb{C}$ such that $\|f\|_{U^{k+1}}\geq \delta$, for some $r\leq M$ there is a $b(r)$-balanced 1-bounded nilspace polynomial $F\co\phi$ of degree $k$ and complexity at most $r$ such that $\langle f, F\co\phi\rangle \geq \delta^{2^{k+1}}/2$.
\end{theorem}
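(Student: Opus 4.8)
\noindent The plan is to establish the contrapositive by a compactness argument resting on the nilspace-factor theory underlying \cite{CSinverse}. Fix $k$, the function $b$ and $\delta$, and suppose for contradiction that no finite $M$ works; then there is a sequence of \textsc{cfr} coset nilspaces $\ns_j$ and $1$-bounded Borel functions $f_j:\ns_j\to\mb{C}$ with $\|f_j\|_{U^{k+1}}\ge\delta$ admitting no $b(r)$-balanced nilspace polynomial of degree $k$ and complexity $r\le j$ that correlates with $f_j$ at level $\delta^{2^{k+1}}/2$. Passing to an ultralimit along a non-principal ultrafilter $\mc{U}$ on $\mb{N}$, I would form the Loeb probability space $\mf{X}=\prod_{\mc{U}}\ns_j$ and the internal function $\mf{f}=[f_j]$. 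The Host--Kra cube measures of the $\ns_j$ assemble into a cubic coupling on $\mf{X}$ in the sense of the cubic-exchangeability framework of \cite{CamSzeg,CSinverse}, and the Gowers $U^{k+1}$-seminorm is preserved under the ultralimit, so $\|\mf{f}\|_{U^{k+1}}\ge\delta$.

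The core step is to invoke the nilspace structure theorem for cubic couplings: the degree-$k$ factor of $\mf{X}$ (the factor capturing the $U^{k+1}$-seminorm) is represented by a compact $k$-step nilspace $\mc{F}$ which is an inverse limit of \textsc{cfr} nilspaces $(\mc{F}_r)_r$, with factor map $\pi:\mf{X}\to\mc{F}$ a nilspace morphism. Since the Gowers dual function $\mc{D}\mf{f}$ is measurable with respect to this factor and satisfies $\langle\mf{f},\mc{D}\mf{f}\rangle=\|\mf{f}\|_{U^{k+1}}^{2^{k+1}}\ge\delta^{2^{k+1}}$, I would write $\mc{D}\mf{f}=G\co\pi$ with $G:\mc{F}\to\mb{D}$, approximate $G$ in $L^2(\mc{F})$ by the still $1$-bounded conditional expectations $G_r:=\mb{E}(G\mid\mc{F}_r)$ (which factor through the \textsc{cfr} quotient $\mc{F}_r$ via a map $\pi_r$), and choose $r$ finite with $\langle\mf{f},G_r\co\pi_r\rangle\ge\delta^{2^{k+1}}/2$. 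This exhibits $\mf{f}$ as correlating with a nilspace polynomial of degree $k$ and some finite complexity $r_0$ on $\mf{X}$; the balance condition would then be arranged by the refinement procedure of \cite{CSinverse} (passing to the support of the pushforward measure and iteratively removing oversized fibres), producing a $b(r)$-balanced morphism $\phi:\mf{X}\to\nss$ onto a \textsc{cfr} nilspace $\nss$ of complexity $r\le M$ for some $M=M(\delta,b,k)$, together with a $1$-bounded $F:\nss\to\mb{C}$ satisfying $\langle\mf{f},F\co\phi\rangle\ge\delta^{2^{k+1}}/2$.

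It remains to descend from the ultraproduct back to the $\ns_j$. As $\nss$, $F$ and the bound $r$ are now fixed and finite, a {\L}o\'s-type transfer argument --- using that a morphism into a fixed \textsc{cfr} nilspace is determined by finitely much cocycle data and that $b(r)$-balance is a quantitative equidistribution statement --- yields, for $\mc{U}$-almost every $j$, a $b(r)$-balanced morphism $\phi_j:\ns_j\to\nss$ with $[\phi_j]=\phi$ Loeb-almost everywhere; then $[F\co\phi_j]=F\co\phi$ and $\langle f_j,F\co\phi_j\rangle\to\langle\mf{f},F\co\phi\rangle\ge\delta^{2^{k+1}}/2$ along $\mc{U}$, so for all large $j$ the pair $(\ns_j,f_j)$ does admit a good nilspace polynomial of complexity $r\le j\le M$, contradicting its choice (any slack in the correlation and balance inequalities is absorbed by running the ultraproduct argument with marginally stronger targets). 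I expect the main obstacle to be the nilspace structure theorem for cubic couplings on the Loeb space --- showing that the degree-$k$ factor is genuinely a pro-\textsc{cfr} $k$-step nilspace whose factor map is a morphism, the deep structural input analogous to the Host--Kra structure theorem for characteristic factors in ergodic theory; the descent step is the second delicate point, since the complexity bound and balance parameter must be transferred from $\mf{X}$ down to the $\ns_j$ without loss.
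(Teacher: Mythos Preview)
This theorem is not proved in the present paper at all: it is simply quoted verbatim as \cite[Theorem 5.2]{CSinverse} and used as a black-box ingredient (see the sentence introducing it in Section~\ref{sec:nil-bnd-tor}). So there is no ``paper's own proof'' to compare your proposal against here.

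That said, your sketch is a fair high-level outline of the strategy actually used in \cite{CSinverse}: an ultraproduct/compactness argument, the structure theorem for cubic couplings yielding a compact $k$-step nilspace factor that is an inverse limit of \textsc{cfr} nilspaces, correlation via the dual function and approximation by a \textsc{cfr} quotient, a refinement step to obtain balance, and a \L o\'s-type descent. You correctly flag the two genuinely deep points --- the structure theorem for the degree-$k$ factor on the Loeb space, and the descent of the morphism and balance condition back to the finite objects --- both of which require substantial work in \cite{CSinverse} and \cite{CScouplings} that your sketch does not (and could not reasonably) reproduce. In short: your proposal is aimed at the right target and names the right ingredients, but for the purposes of this paper the theorem is an imported result, not something to be reproved.
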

\noindent The second ingredient is a generalization to the $m$-torsion setting of what was already a key ingredient in \cite{CGSS-p-hom}, namely \cite[Proposition 2.3]{CGSS-p-hom}. The latter result states that for a prime $p$, if a morphism $\phi:\mc{D}_1(\mb{F}_p^n)\to \ns$ is sufficiently balanced (depending on $\ns$, a particular metric $d$ on $\ns$, and $p$) then every structure group of $\ns$ is a $p$-torsion group (i.e.\ an elementary abelian $p$-group). The generalization that we obtain for the $m$-torsion case is the following.
\begin{proposition}\label{prop:b-bal-m-bounded-finite}
Let $\nss$ be a $k$-step \textsc{cfr} nilspace, let $d$ be a metric generating the topology on $\nss$, and let $m\in \mb{N}$. There exists $b=b(\nss,d,m)>0$ such that the following holds: if for some $m$-torsion abelian group $\ab$ there is a $b$-balanced morphism $\varphi:\mc{D}_1(\ab)\to \nss$, then every structure group of $\nss$ is a finite abelian $m$-torsion group, and in particular $\nss$ is finite.
\end{proposition}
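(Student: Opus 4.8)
The plan is to follow the proof of \cite[Proposition 2.3]{CGSS-p-hom}, which establishes the prime case $m=p$ (with $\ab=\mb{F}_p^n$): the only property of $\mb{F}_p^n$ used there is that it is $p$-torsion, so the argument should carry over with $p$ replaced by $m$ throughout. We argue by induction on the number of steps $k$ of $\nss$; the case $k=0$ is vacuous, so assume $k\ge 1$.

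Let $\pi\colon\nss\to\nss_{k-1}$ be the canonical projection onto the $(k-1)$-step factor, and let $G$ denote the top structure group of $\nss$, which is a compact abelian Lie group since $\nss$ is \textsc{cfr}. As $\pi$ is a fibration and balance is inherited under composition with factor maps, the composite $\pi\co\varphi\colon\mc{D}_1(\ab)\to\nss_{k-1}$ is $b'$-balanced with $b'=b'(b)\to 0$ as $b\to 0$; so, taking $b$ small enough in terms of $\nss_{k-1}$ together with a metric $d_{k-1}$ on it induced by $d$, the inductive hypothesis applies and $\nss_{k-1}$ is finite with all of its structure groups finite $m$-torsion abelian groups. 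Since $|\nss|=|\nss_{k-1}|\cdot|G|$, it remains to prove that $G$ is finite and $m$-torsion. Since $G$ is a compact abelian Lie group, $\wh{G}$ is finitely generated; as a finitely generated abelian group of exponent dividing $m$ is finite, and a compact abelian Lie group killed by $m$ is finite, it suffices to show that $\chi^m=1$ for every $\chi\in\wh{G}$.

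Suppose for contradiction that some $\chi\in\wh{G}$ satisfies $\chi^m\ne 1$. Pushing the abelian extension $\nss\to\nss_{k-1}$ forward along $\chi$ yields a $k$-step \textsc{cfr} nilspace $\nss''$ with the same $(k-1)$-step factor $\nss_{k-1}$ and with top structure group the compact subgroup $\chi(G)\le\mb{R}/\mb{Z}$, classified by a degree-$k$ cocycle $\sigma$ on $\nss_{k-1}$ with values in $\chi(G)$; moreover, composing $\varphi$ with the morphism $\nss\to\nss''$ gives a $b$-balanced morphism $\varphi''\colon\mc{D}_1(\ab)\to\nss''$. Choosing a section of $\nss''\to\nss_{k-1}$ and writing $\varphi''=(\pi\co\varphi,\beta)$ accordingly, the map $\beta\colon\ab\to\mb{R}/\mb{Z}$ obeys the morphism relation
\[
\parc_{h_1}\cdots\parc_{h_{k+1}}\beta(x)=\sigma\bigl((\pi\co\varphi)\co c_{x,h_1,\dots,h_{k+1}}\bigr)\qquad\text{for all }x,h_1,\dots,h_{k+1}\in\ab ,
\]
with $c_{x,h_1,\dots,h_{k+1}}\colon v\mapsto x+\sum_i v_i h_i$ the corresponding affine cube of $\mc{D}_1(\ab)$. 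The task is now to deduce that $\chi(G)$ is $m$-torsion, contradicting $\chi^m\ne 1$. This is where we follow \cite[Proposition 2.3]{CGSS-p-hom} almost verbatim, with $p$ replaced by $m$: one exploits the $m$-torsion of $\ab$ (which yields $\mb{Z}/m$-linear relations among the iterated difference operators appearing on the left-hand side), the fact that $\nss_{k-1}$ is already known to be $m$-torsion, and the equidistribution furnished by the balance of $\varphi''$ at level $k$ against the nontrivial character $\chi$; together these force $\sigma$ to take values in $\tfrac1m\mb{Z}/\mb{Z}$, so that $m\,\chi(G)=0$. Hence $\chi^m=1$ for all $\chi\in\wh{G}$, which completes the induction.

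I expect the main obstacle to be precisely this last step. It is delicate because it is sharp: the crude observation that $\beta$ restricts on suitable cosets to a polynomial of bounded degree on an $m$-torsion group only bounds the exponent of $G$ by some $m^{c(k)}$, and the existence of genuinely non-classical polynomials shows this cruder bound cannot be improved by algebra alone — the balance hypothesis must really be used. The prime case of this argument is carried out in \cite[Proposition 2.3]{CGSS-p-hom}; since no property of prime $p$ beyond $px=0$ enters there, transferring it to general $m$ should require only notational changes, and this is the heart of the adaptation. A secondary technical point is keeping track of how the balance parameter degrades under the factor map $\pi$, which is what makes the induction close.
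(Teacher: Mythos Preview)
Your proposal is correct and follows exactly the route the paper takes: the paper's proof consists of the single remark that one replaces $\mb{Z}_p^D$, $\mb{Z}_p^k$, and $p$ in the proof of \cite[Proposition 2.3]{CGSS-p-hom} by $\ab$, $\mb{Z}_m^k$, and $m$ respectively. Your expanded outline of that argument (induction on $k$, factor through $\pi$, push forward along a character, exploit $m$-torsion plus balance) is a faithful unpacking of what that substitution entails, and your caveat that the balance hypothesis is genuinely needed in the last step is well taken but does not indicate any gap.
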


\begin{proof}
The argument is essentially the same as the proof of \cite[Proposition 2.3]{CGSS-p-hom}. In fact, it suffices to replace $\mb{Z}_p^D$, $\mb{Z}_p^k$ and $p$ in that proof by $\ab$, $\mb{Z}_m^k$ and $m$ respectively. (Here and elsewhere in this paper we denote by $\mb{Z}_N$ the cyclic group of integers with addition modulo $N$.)
\end{proof}

Combining these two ingredients we obtain the main result of this section.

\begin{theorem}\label{thm:nil-for-m-exp}
For every $k,m\in \mb{N}$ and $\delta>0$ there exists $C>0$ such that the following holds. Let $\ab$ be a finite abelian $m$-torsion group, and let $f:\ab\to \mb{C}$ be a $1$-bounded function with $\|f\|_{U^{k+1}}\ge \delta$. Then there is a finite $m$-torsion nilspace $\ns$ of cardinality $|\ns|\le C$, a morphism $\phi:\mc{D}_1(\ab)\to \ns$, and a $1$-bounded function $F:\ns\to \mb{C}$, such that $\langle f,F\co\phi\rangle \ge \tfrac{1}{2}\delta^{2^{k+1}}$. Moreover, for any fixed function $D:\mb{R}_{>0}\to \mb{R}_{>0}$ we can further assume that there exists $M=M(D,\delta)>0$ such that $\ns$ has complexity at most $r\le M$ and $\phi$ is $D(r)$-balanced.
\end{theorem}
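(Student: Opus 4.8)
The plan is to apply Theorem \ref{thm:gen-inverse} to the degree-$1$ nilspace $\mc{D}_1(\ab)$ and then upgrade the resulting nilspace to an $m$-torsion one using Proposition \ref{prop:b-bal-m-bounded-finite}. Since $\ab$ is finite, $\mc{D}_1(\ab)$ is a finite, hence CFR, coset nilspace, so Theorem \ref{thm:gen-inverse} applies to it and to $f$: for any function $b:\mb{R}_{>0}\to\mb{R}_{>0}$ it produces some $r\le M=M(b,\delta)$, a CFR nilspace $\ns$ of complexity at most $r$, a $b(r)$-balanced morphism $\phi:\mc{D}_1(\ab)\to\ns$, and a $1$-bounded function $F:\ns\to\mb{C}$ with $\langle f,F\co\phi\rangle\ge\tfrac12\delta^{2^{k+1}}$. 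It remains to choose $b$ so that $\ns$ is forced to be $m$-torsion, and then to bound $|\ns|$.

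To invoke Proposition \ref{prop:b-bal-m-bounded-finite} one has to handle an order-of-quantifiers issue: its balance threshold $b(\nss,d,m)$ depends on the target nilspace, whereas in Theorem \ref{thm:gen-inverse} the function $b$ must be fixed before $\ns$ is known. I would first establish a \emph{uniform} version of Proposition \ref{prop:b-bal-m-bounded-finite}: for each $r\in\mb{N}$ there is $b^*(r)=b^*(r,m)>0$ such that every CFR nilspace $\nss$ of complexity at most $r$, with a suitable metric, has all structure groups $m$-torsion whenever it receives a $b^*(r)$-balanced morphism from $\mc{D}_1$ of some $m$-torsion group. Such a threshold should be extractable from the proof of Proposition \ref{prop:b-bal-m-bounded-finite} (i.e.\ of \cite[Proposition 2.3]{CGSS-p-hom}), since the threshold there is governed only by the step, the ranks of the structure groups, and moduli of continuity of the Host--Kra cube maps of $\nss$ in degrees at most $k+1$ --- data controlled by the complexity once a metric is fixed. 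Granting such a $b^*$, one feeds $b:=\min(b^*,D)$ (pointwise minimum) into Theorem \ref{thm:gen-inverse}, where $D$ is the function prescribed in the ``moreover'' clause (one may take $D:=b^*$ if only the first assertion is sought); the resulting $\phi$ is then both $b^*(r)$-balanced and $D(r)$-balanced, so the uniform form of Proposition \ref{prop:b-bal-m-bounded-finite}, applied with $\nss:=\ns$, shows that every structure group of $\ns$ is a finite $m$-torsion abelian group, and in particular $\ns$ is finite.

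For the cardinality bound, note that $\ns$ has complexity at most $r\le M$, so it is at most $k$-step and each of its structure groups $Z_i$ has rank bounded in terms of $r$; being $m$-torsion, each such $Z_i$ then has order at most $m^{O_r(1)}$, and hence $|\ns|=\prod_i|Z_i|\le m^{O_r(1)}$, a quantity bounded in terms of $M$ and $m$. Taking $\ns$ together with $\phi$ and $F$ gives all assertions, the inequality $r\le M$ and the $D(r)$-balance of $\phi$ being precisely the ``moreover'' clause.

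The step I expect to be the main obstacle is the uniformity used in the second paragraph: there are infinitely many CFR nilspaces of complexity at most $r$ (their finite structure groups may have bounded rank but unbounded order), so Proposition \ref{prop:b-bal-m-bounded-finite} cannot be quoted off the shelf. If its proof does not manifestly yield a threshold depending only on $r$ and $m$, I would instead argue by contradiction and compactness: a sequence $\nss_j$ of CFR nilspaces of complexity at most $r$ admitting $(1/j)$-balanced morphisms from $m$-torsion groups, but each having some non-$m$-torsion structure group, would, on passing to an inverse limit (or ultralimit), yield a CFR nilspace of complexity at most $r$ receiving a fully balanced morphism from an $m$-torsion group yet lacking the property that all structure groups are $m$-torsion, contradicting the limiting case of the argument behind Proposition \ref{prop:b-bal-m-bounded-finite}. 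Everything else is routine bookkeeping combining Theorem \ref{thm:gen-inverse} with Proposition \ref{prop:b-bal-m-bounded-finite}.
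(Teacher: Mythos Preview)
Your approach is essentially the same as the paper's: apply Theorem~\ref{thm:gen-inverse} and then Proposition~\ref{prop:b-bal-m-bounded-finite}. However, the obstacle you flag in the second and fourth paragraphs is not actually present. The complexity notion in \cite[Definition~1.2]{CSinverse} is an \emph{enumeration}: one fixes once and for all a sequence $(\nss\sbr{i})_{i\ge 1}$ of \textsc{cfr} nilspaces (each equipped with a fixed metric $d_{\nss\sbr{i}}$), and a nilspace has complexity at most $r$ precisely when it equals $\nss\sbr{i}$ for some $i\le r$. In particular there are only finitely many nilspaces of complexity at most $r$, not infinitely many as you suppose. The paper therefore simply sets $b(r)$ below $\min\{b(\nss\sbr{i},d_{\nss\sbr{i}},m):i\le r\}$, a minimum of finitely many positive constants each supplied directly by Proposition~\ref{prop:b-bal-m-bounded-finite}; no uniform version, extraction from the proof, or compactness argument is needed. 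The cardinality bound is likewise immediate: $C:=\max\{|\nss\sbr{i}|:i\le M,\ \nss\sbr{i}\textrm{ finite}\}$. Your alternative cardinality argument (bounded rank plus $m$-torsion implies bounded order) also works, but is not required once one knows the set of candidate nilspaces is finite.
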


\begin{proof}
We apply Theorem \ref{thm:gen-inverse} with $b$ chosen as follows. For each $r$, we let $0<b(r)<\min\{b'(\nss\sbr{i},d_{\nss\sbr{i}},m):i=\textrm{Comp}(\nss\sbr{i})\le r\}$ where $b'(\nss\sbr{i},d_{\nss\sbr{i}},m)$ is given by Proposition \ref{prop:b-bal-m-bounded-finite} applied to the nilspace $\nss\sbr{i}$ in our fixed complexity notion. Therefore, the nilspace $\ns$  given by Theorem \ref{thm:gen-inverse} equals $\nss\sbr{i}$ for some $i\le M$ and has all its structure groups of torsion $m$ by Proposition \ref{prop:b-bal-m-bounded-finite}. Moreover, we can bound the size of $\ns$ as follows. Let $C:=\max\{|\nss\sbr{i}|:i= \textrm{Comp}(\nss\sbr{i})\le M \text{ and }\nss\sbr{i} \text{ is finite.}\}$. Clearly $|\ns|\le C$.

For the last part of the theorem, we can further apply Theorem \ref{thm:gen-inverse} with $\min(b(r),D(r))$ to deduce that $\ns$ has complexity at most $r$ and $\phi$ is $D(r)$-balanced.
\end{proof}

\section{The primary decomposition of finite nilspaces}\label{sec:primdecomp}

\noindent The fundamental theorem of finite abelian groups yields the decomposition of any such group $G$ as the direct sum of the $p$-groups $G_p:=\{x\in G:\textrm{the order of $x$ is a power of $p$}\}$ where $p$ runs through $\mc{P}(|G|)$ (also known as the \emph{primary decomposition} of $G$) \cite[2.1.6 Theorem]{KSfintie groups}. This phenomenon extends into the nilpotent setting, with the decomposition of any finite nilpotent group as the direct product of its Sylow $p$-subgroups \cite[5.1.4 Theorem]{KSfintie groups}. Recently, it was shown in \cite[Theorem 2.3]{JST-tot-dis} that a similar phenomenon occurs in ergodic theory when we study the $k$-th order Host-Kra factor of an ergodic  $\Gamma$-system where the $\Gamma$ is $m$-torsion. In this section we prove Theorem \ref{thm:p-sylow-intro}, establishing such a decomposition for every finite nilspace. Recall from Definition \ref{def:pnil} the notion of a $p$-nilspace. We shall first prove the following theorem yielding the decomposition, leaving the proof of uniqueness for the end of this section.
\begin{theorem}\label{thm:p-sep}
Let $\ns$ be a finite $k$-step nilspace. Then for every prime $p\in\mc{P}(|\ns|)$ there is a $k$-step $p$-nilspace $\ns_p$ such that we have the following isomorphism of nilspaces:
\begin{equation}\label{eq:p-sep}
\ns\cong \prod_{p\in \mc{P}(|\ns|)} \ns_p.
\end{equation}
\end{theorem}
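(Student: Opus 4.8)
The plan is to prove this by induction on the number of steps $k$, using the structure theory of nilspaces as a tower of abelian extensions. For $k=0$ the nilspace is a point and there is nothing to prove; for $k=1$ a finite $1$-step nilspace is (a torsor over) a finite abelian group, so the classical primary decomposition $\ab \cong \bigoplus_{p} \ab_p$ gives the result directly, with $\ns_p = \mc{D}_1(\ab_p)$. For the inductive step, suppose the result holds for $(k-1)$-step finite nilspaces, and let $\ns$ be a finite $k$-step nilspace. Recall that $\ns$ sits in an extension $\ns \to \ns_{k-1}$ where $\ns_{k-1} = \pi_{k-1}(\ns)$ is the $(k-1)$-step factor, and the fibres are torsors over the $k$-th structure group $\ab_k = \ab_k(\ns)$, a finite abelian group; concretely $\ns$ is a $\mc{D}_k(\ab_k)$-bundle over $\ns_{k-1}$ classified by a Host--Kra cocycle (see \cite[Theorem 3.2.19]{Cand:Notes1} and the theory of nilspace extensions in \cite{Cand:Notes2}). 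By the inductive hypothesis, $\ns_{k-1} \cong \prod_{p} (\ns_{k-1})_p$ with each factor a $(k-1)$-step $p$-nilspace. Also apply the classical primary decomposition to $\ab_k$, writing $\ab_k \cong \bigoplus_{p} (\ab_k)_p$.

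The heart of the argument will be to show that the classifying cocycle of the extension $\ns \to \ns_{k-1}$ splits compatibly along the two primary decompositions, i.e.\ that the $(\ab_k)_p$-component of the cocycle can be taken to depend only on the $(\ns_{k-1})_p$-factor. The key leverage is a coprimality/divisibility phenomenon: if $q \ne p$ are primes, then multiplication by $q$ is an automorphism of the $p$-group $(\ab_k)_p$, while the relevant cube groups and cocycle arguments over a $q$-nilspace factor are annihilated by a power of $q$; hence any cocycle valued in $(\ab_k)_p$ but "pulled from" the $q$-part of $\ns_{k-1}$ is a coboundary and can be trivialized. I would make this precise using the cohomology of nilspaces (the groups $H^{k+1}$ classifying extensions, as developed in \cite{GMV2} or \cite{Cand:Notes2}): the cocycle space decomposes as a direct sum over $p \mid |\ab_k|$ of $(\ab_k)_p$-valued parts, and for each such $p$ a Künneth-type / coprimality argument shows the cohomology class is represented by a cocycle supported on the $(\ns_{k-1})_p$-factor. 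Collecting terms, $\ns$ becomes isomorphic to $\prod_p \ns_p$ where $\ns_p$ is the $\mc{D}_k((\ab_k)_p)$-extension of $(\ns_{k-1})_p$ by the $p$-part of the cocycle; each $\ns_p$ is then a $k$-step $p$-nilspace since all its structure groups — namely the structure groups of $(\ns_{k-1})_p$ in degrees $< k$, plus $(\ab_k)_p$ in degree $k$ — are $p$-groups.

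I expect the main obstacle to be the cocycle-splitting step: verifying rigorously that a Host--Kra $(k+1)$-cocycle on a product nilspace $\prod_p (\ns_{k-1})_p$ valued in $\bigoplus_p (\ab_k)_p$ can be put, up to coboundary, in "block-diagonal" form respecting the two product structures. This requires either a clean Künneth formula for nilspace cohomology with the coprime-order torsion hypotheses doing the vanishing of cross terms, or a hands-on argument averaging the cocycle over the relevant $q$-divisible directions (using that each $(\ns_{k-1})_q$ has cardinality a power of $q$, so averaging over it is an operation invertible on $(\ab_k)_p$ for $p \ne q$) to kill its dependence on the $q$-coordinates while preserving its cohomology class. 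A subsidiary technical point is bookkeeping the cube structures: one must check that the product of the Host--Kra cube sets of the $\ns_p$ really is the Host--Kra cube set of $\ns$ under the constructed isomorphism, which should follow formally once the cocycle has been split, since Host--Kra cubes on a bundle are determined by cubes on the base together with the cocycle. The uniqueness statement, deferred to the end of the section per the text, will follow by extracting the structure groups: the degree-$j$ structure group of any such product decomposition must be $\bigoplus_p \ab_j(\ns_p)$, and each $\ab_j(\ns_p)$ is forced to be the $p$-primary part of $\ab_j(\ns)$, after which an inductive rigidity argument pins down each $\ns_p$ up to isomorphism.
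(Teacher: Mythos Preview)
Your proposal is correct and takes essentially the same approach as the paper: induction on $k$, with the inductive step reducing to showing that the extension cocycle of $\ns$ over $\ns_{k-1}\cong\prod_p(\ns_{k-1})_p$ can be put in block-diagonal form via exactly the hands-on averaging argument you describe (averaging a $(\ab_k)_p$-valued cocycle over cubes in the coprime-order factor $\prod_{q\neq p}(\ns_{k-1})_q$, which is legitimate since multiplication by that factor's cardinality is invertible on $(\ab_k)_p$). The only organizational difference is that the paper peels off one prime of $\ab_k$ at a time via a sub-induction on the number of prime factors of $|\ab_k|$ (Proposition~\ref{prop:psepkeyprop}), rather than handling all primes simultaneously as you propose; the core averaging step (Proposition~\ref{prop:cocycledecomp} and the operators $\mc{E},\mc{E}'$) is precisely your suggested mechanism.
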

\noindent Since the order of a $p$-nilspace is a power of $p$, we have in particular that if $|\ns|=\prod_{p\in \mc{P}(|\ns|)} p^{\alpha_p}$ is the prime factorization of $|\ns|$ then each $p$-nilspace $\ns_p$ in \eqref{eq:p-sep} has cardinality $|\ns_p|=p^{\alpha_p}$.

The proof of Theorem \ref{thm:p-sep} argues by induction on the step $k$. The central statement for the induction is the following. Recall the notion of degree-$k$ extensions from \cite[Definition 3.3.13]{Cand:Notes1}.

\begin{proposition}\label{prop:psepkeyprop}
Let $\nss_1,\nss_2$ be finite nilspaces and let $\ab$ be a finite abelian group such that $|\ab|$ and $|\nss_1|$ are coprime. Let $\ns$ be a finite nilspace which is a degree-$k$ extension of $\nss_1\times\nss_2$ by $\ab$. Then $\ns$ is isomorphic to $\nss_1\times \nss_2'$ where $\nss_2'$ is a degree-$k$ extension of $\nss_2$ by $\ab$.
\end{proposition}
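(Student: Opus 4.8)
The plan is to exploit the coprimality $\gcd(|\ab|,|\nss_1|)=1$ to split off the $\nss_1$-factor inside $\ns$, reducing the degree-$k$ extension of a product to a degree-$k$ extension of a single factor. Recall that a degree-$k$ extension $\ns$ of a nilspace $\nss:=\nss_1\times\nss_2$ by $\ab$ comes with a projection $\pi:\ns\to\nss$ whose fibres are principal homogeneous spaces over $\ab$, and whose cube sets are governed by the degree-$k$ filtration on $\ab$ via the cocycle description in \cite[Definition 3.3.13]{Cand:Notes1}. Composing $\pi$ with the coordinate projection $\nss\to\nss_1$ gives a fibration $q:\ns\to\nss_1$; I want to show $q$ splits as a product, i.e.\ that $\ns\cong\nss_1\times(q^{-1}(\text{point}))$ as nilspaces, where the fibre over a point will be the desired $\nss_2'$.

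First I would set up the cocycle picture. Fix an arbitrary set-theoretic section $s:\nss\to\ns$ of $\pi$; then $\ns$ is identified with $\nss\times\ab$ as a set, and the cube structure $C^n(\ns)$ is described by a collection of maps (a "degree-$k$ cocycle") $\sigma:C^n(\nss)\to\ab$ measuring the failure of $s$ to be a morphism, for each $n$. The class of this cocycle in the appropriate cohomology group $H^{k+1}(\nss,\ab)$ (in the sense of nilspace extension theory, cf.\ the cohomological description of extensions underlying \cite[Definition 3.3.13]{Cand:Notes1}) is the obstruction to the extension being trivial on a given base. The key point is a Künneth-type / coprimality vanishing: since $\nss_1$ is a finite nilspace all of whose structure groups have order dividing $|\nss_1|$, every relevant cohomology group of $\nss_1$ with coefficients in $\ab$ vanishes because $|\ab|$ is coprime to $|\nss_1|$ — more precisely, restricting the cocycle $\sigma$ to cubes that are constant in the $\nss_2$-coordinate yields a degree-$k$ cocycle on $\nss_1$ with values in $\ab$, and by coprimality this cocycle is a coboundary (the averaging/transfer argument: $|\nss_1|$ is invertible in $\ab$, so the standard averaging cohomology-killing trick applies to the finite nilspace $\nss_1$, whose structure groups are $|\nss_1|$-torsion). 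Adjusting the section $s$ by this coboundary, I may assume $\sigma$ vanishes on all cubes constant in the second coordinate. One then argues, coordinate-by-coordinate through a presentation of $C^n(\nss_1\times\nss_2)$, that $\sigma$ in fact depends only on the $\nss_2$-coordinate of its argument; this is where one uses the bilinearity/multilinearity structure of the cocycle equations together with the vanishing just obtained, plus induction on $k$ via the structure groups. With $\sigma$ pulled back from a cocycle $\sigma_2$ on $\nss_2$, the identification $\ns\cong\nss_1\times(\nss_2\times_{\sigma_2}\ab)=\nss_1\times\nss_2'$ follows, with $\nss_2'$ the degree-$k$ extension of $\nss_2$ by $\ab$ defined by $\sigma_2$.

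The main obstacle I expect is the second half of the middle step: upgrading "$\sigma$ is a coboundary on $\nss_1$-constant cubes" to "$\sigma$ factors through the projection to $\nss_2$." Killing the restriction to one slice is a clean transfer argument, but a nilspace cocycle is a compatible family over all cube dimensions, and one must show the full family descends. I would handle this by filtering $\ns$ through the canonical factors $\ns=\ns_k\to\ns_{k-1}\to\cdots$, so that at each step the relevant extension data lives in a single structure group (an abelian group), reducing the statement to a genuinely abelian-group-level claim: a short exact sequence of finite abelian groups $0\to\ab\to E\to A_1\oplus A_2\to 0$ with $\gcd(|\ab|,|A_1|)=1$ splits off the $A_1$-part, which is exactly the primary/Sylow decomposition for finite abelian groups. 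Feeding this back up the filtration — checking at each stage that the splitting is compatible with the cube structure, i.e.\ is a morphism, which is automatic once the cocycle has the product form — completes the induction and yields the claimed isomorphism $\ns\cong\nss_1\times\nss_2'$.
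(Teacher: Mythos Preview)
Your overall strategy---encode the extension by a cocycle $\sigma:\cu^{k+1}(\nss_1\times\nss_2)\to\ab$, use that $|\nss_1|$ is invertible in $\ab$ to average away the $\nss_1$-dependence, and conclude that $\sigma$ is cohomologous to a cocycle pulled back from $\nss_2$---is exactly the paper's strategy. But the step you yourself flag as the obstacle is where your proposal has a genuine gap, and your two suggested ways around it do not work as written.

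First approach: after adjusting the section so that $\sigma$ vanishes on cubes constant in the $\nss_2$-coordinate, you want to deduce that $\sigma(\q_1\times\q_2)$ is independent of $\q_1$. The phrase ``bilinearity/multilinearity of the cocycle equations'' is not a property that nilspace cocycles have in any direct sense; the defining conditions are $\aut(\db{k+1})$-equivariance and additivity under concatenation, and neither lets you compare $\sigma(\q_1\times\q_2)$ with $\sigma(\q_1'\times\q_2)$ for unrelated $\q_1,\q_1'$. The paper does not try to normalise $\sigma$ and then argue independence; instead it defines the candidate $\nss_2$-cocycle directly as an average $\kappa(\q_1\times\q_2):=\mb{E}_{\q_1'\in\cu^{k+1}(\nss_1)}\sigma(\q_1'\times\q_2)$, checks this is a cocycle (Lemma~\ref{lem:kappa}), and then proves $\sigma-\kappa$ is a coboundary by constructing the cobounding function explicitly as a \emph{rooted} average $g(y):=\mb{E}_{\q_1'\in\cu^{k+1}_{y_1}(\nss_1)}\sigma(\q_1'\times\q_2)-\kappa(\q)$ for any $\q$ with $\q(0^{k+1})=y$. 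Showing $g$ is well defined and verifies the coboundary identity requires a tricube argument (Lemma~\ref{lem:fact1} and the end of the proof of Proposition~\ref{prop:cocycledecomp}); this is the substance you are missing.

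Second approach (the fallback via the canonical filtration): this is circular. For $i<k$ the factors $\ns_i$ already coincide with $(\nss_1\times\nss_2)_i$, so nothing happens there. At the top, knowing that the abelian group $\ab_k(\ns)$ splits as $\ab_k(\nss_1)\times B$ (which is true by coprimality) does \emph{not} tell you that the cocycle $\cu^{k+1}(\nss_{k-1})\to\ab_k(\ns)$ governing $\ns\to\ns_{k-1}$ respects this splitting; that is equivalent to the original problem. Your parenthetical ``which is automatic once the cocycle has the product form'' is precisely what has to be proved.
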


\begin{proof}[Proof of Theorem \ref{thm:p-sep} using Proposition \ref{prop:psepkeyprop}]
We argue by induction on $k$, using also a sub-induction on the number of distinct prime factors of the $k$-th structure group of $\ns$. For $k=1$ the nilspace $\ns$ is (an affine version of) a finite abelian group, so the result follows from the primary decomposition of finite abelian groups recalled above. We can therefore assume that $k\geq 2$ and that the theorem holds for $(k-1)$-step nilspaces. Let $|\ns|=p_1^{\alpha_1}\cdots p_m^{\alpha_m}$ be the prime factorization of $|\ns|$, for positive integers $\alpha_i$. Let $\ab_k$ be the last structure group of $\ns$, and note that since $|\ab_k|$ divides $|\ns|$, we have $|\ab_k|=p_1^{\beta_1}p_2^{\beta_2}\dots p_m^{\beta_m}$ for integers $\beta_i\geq 0$. If all $\beta_i$ are 0, then $|\ab_k|=1$ and $\ns$ is in fact of step $k-1$, so the induction hypothesis gives the result. We can therefore assume (permuting factors if needed) that $\beta_m>0$ and that the statement holds in the case where $|\ab_k|$ has at most $m-1$ distinct prime factors. Let us write $\ab_k=\ab_{k,1}\times \dots\times \ab_{k,m}$ with $|\ab_{k,i}|=p_i^{\beta_i}$. Let $\ns'$ be the nilspace $\ns/\ab_{k,m}$ (this is a well-defined nilspace by \cite[Proposition A.19]{CGSS-p-hom}). 
By our induction and sub-induction hypotheses, we have a primary decomposition $\ns'\cong \prod_{i=1}^r \ns'_{q_i}$ where the $q_i$ are the prime divisors of $|\ns'|$, and $\ns'_{q_i}$ is a $q_i$-nilspace for each $i$. Since $|\ns'|=|\ns|/|\ab_{k,m}|$, by permuting the factors of $\ns'$ if necessary, we can assume without loss of generality that $r=m$, that $q_i=p_i$ for each $i\in [m]$, and that $|\ns'|=p_1^{\alpha_1}\cdots p_{m-1}^{\alpha_{m-1}}p_m^{\alpha_m'}$, where $\alpha_m'=\alpha_m-\beta_m\geq 0$. Hence $\ns$ is a degree-$k$ extension of $\ns'$ by $\ab_{k,m}$. By Proposition \ref{prop:psepkeyprop} applied with $\nss_1=\prod_{i=1}^{m-1} \ns'_{p_i}$, $\nss_2=\ns_{p_m}'$, and $\ab=\ab_{k,m}$, we deduce that $\ns\cong (\prod_{i=1}^{m-1} \ns'_{p_i})\times \ns_{p_m}''$, where $\ns_{p_m}''$ is a degree-$k$ extension of $\ns_{p_m}'$ by $\ab_{k,m}$ (and therefore a $p_m$-nilspace). This completes the induction and Theorem \ref{thm:p-sep} follows.
\end{proof}
\noindent We can now focus on proving Proposition \ref{prop:psepkeyprop}. To this end we will work with nilspace cocycles (see \cite[\S 3.3.3]{Cand:Notes1}). By basic nilspace theory we know that the extension is given by a cocycle $\rho:\cu^{k+1}(\nss_1\times\nss_2)\to \ab$. The main result which will imply Proposition \ref{prop:psepkeyprop} is the following, yielding a cocycle decomposition of the form $\rho=\kappa+\tau$, where $\tau$ is a coboundary, and $\kappa:\cu^{k+1}(\nss_1\times\nss_2)\to \ab$ factors through the 2-nd component projection on $\nss_1\times \nss_2$.
\begin{proposition}\label{prop:cocycledecomp}
Let $\nss_1,\nss_2$ be finite nilspaces and let $\ab$ be a finite abelian group such that $|\ab|$ and $|\nss_1|$ are coprime. Let $\ns$ be a finite nilspace which is a degree-$k$ extension of $\nss_1\times\nss_2$ by $\ab$, and let $\rho:\cu^{k+1}(\nss_1\times\nss_2)\to \ab$ be the associated cocycle. Then there is a cocycle $\kappa_2:\cu^{k+1}(\nss_2)\to \ab$ such that, letting $\pi_2:\nss_1\times \nss_2\to\nss_2$ be the projection $(y_1,y_2)\mapsto y_2$, and letting $\kappa=\kappa_2\co\pi_2^{\{0,1\}^{k+1}}:\q\mapsto  \kappa_2(\pi_2\co\q)$, we have that $\rho-\kappa$ is a coboundary. 
\end{proposition}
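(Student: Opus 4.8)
The plan is to show that $\rho$ is, up to a coboundary, pulled back from $\nss_2$ along $\pi_2$: concretely, to produce a function $\sigma:\nss_1\times\nss_2\to\ab$ and a degree-$(k+1)$ nilspace cocycle $\kappa_2$ on $\nss_2$ with $\rho=\kappa_2\co\pi_2^{\{0,1\}^{k+1}}+\mr{d}\sigma$, where $\mr{d}\sigma$ denotes the coboundary $\q\mapsto\sum_{v\in\{0,1\}^{k+1}}(-1)^{|v|}\sigma(\q(v))$; in cohomological language, the class of $\rho$ should lie in the image of the inflation map along $\pi_2$. The mechanism throughout is \emph{averaging over $\nss_1$}, which is available precisely because of the coprimality hypothesis: since $\gcd(|\nss_1|,|\ab|)=1$, multiplication by any divisor of $|\nss_1|$ is an automorphism of $\ab$, so one may form means $\mb{E}_{y_1\in\nss_1}g(y_1):=|\nss_1|^{-1}\sum_{y_1\in\nss_1}g(y_1)$ of $\ab$-valued functions. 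This is the nilspace analogue of the classical transfer argument, by which the cohomology of a finite group vanishes with coefficients of coprime order, and of the collapse of the Lyndon--Hochschild--Serre spectral sequence of a direct product one of whose factors has order coprime to the coefficient module.

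I would argue by induction on $|\nss_1|$. The case $|\nss_1|=1$ is trivial (take $\kappa_2=\rho$). For the inductive step, realise $\nss_1$ as a degree-$j_0$ extension of a finite nilspace $\nss_1'$ with $|\nss_1'|<|\nss_1|$, by its topmost non-trivial structure group $\ab_0$; note $\gcd(|\ab_0|,|\ab|)=1$ since $|\ab_0|$ divides $|\nss_1|$. Taking products with $\nss_2$ turns this into a degree-$j_0$ extension $p:\nss_1\times\nss_2\to\nss_1'\times\nss_2$ with structure group $\ab_0$. It then suffices to show that $\rho$ is cohomologous to the pullback $p^*\rho'$ of some degree-$(k+1)$ cocycle $\rho'$ on $\nss_1'\times\nss_2$: the induction hypothesis applied to $\rho'$ yields $\rho'=\kappa_2\co(\pi_2')^{\{0,1\}^{k+1}}+\mr{d}\sigma'$ on $\nss_1'\times\nss_2$, and since $\pi_2'\co p=\pi_2$, pulling this back along $p$ (pullback of a pullback-from-$\nss_2$ cocycle is a pullback-from-$\nss_2$ cocycle, pullback of a coboundary is a coboundary) and adding the coboundary relating $\rho$ to $p^*\rho'$ gives the desired decomposition for $\rho$, with the same $\kappa_2$.

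The heart of the matter is therefore the following claim: when $\gcd(|\ab_0|,|\ab|)=1$, every degree-$(k+1)$ cocycle on the total space of a degree-$j_0$ extension by $\ab_0$ is cohomologous to a pullback from the base. I would prove it in three moves. First, restricting $\rho$ to the fibres of $p$, each of which is isomorphic to $\mc{D}_{j_0}(\ab_0)$, yields degree-$(k+1)$ cocycles on $\mc{D}_{j_0}(\ab_0)$; since $\ab_0$ is a finite abelian group of order coprime to $|\ab|$, each of these is a coboundary, the contracting homotopy being supplied by averaging over (equivalently, symmetrising under the translations of) $\ab_0$. Second, one trivialises these fibrewise with coherent choices, using the free $\ab_0$-action on the fibres to make the averaging canonical; this produces $\sigma$ for which $\mu:=\rho-\mr{d}\sigma$ is constant along the fibres of $p$. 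Third, one checks that such a $\mu$ descends to a cocycle $\rho'$ on $\nss_1'\times\nss_2$ with $\mu=p^*\rho'$; here too the $\ab_0$-action on cubes, together with invertibility of $|\ab_0|$ in $\ab$, forces the obstruction to descent to vanish after averaging over $\ab_0$.

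The main obstacle is this last claim, and within it the coherent fibrewise trivialisation and the descent step. Passing from pointwise/fibrewise triviality to genuine triviality over the product must be done at the level of cochains, because $\cu^{k+1}(\nss_1\times\nss_2)\to\cu^{k+1}(\nss_1'\times\nss_2)$ is not a trivial fibration: its fibres are torsors under $\ab_0$-valued cube-data, and $\rho$ interacts with the twisting cocycle of the extension $p$. I expect this to require some bookkeeping with the Host--Kra cubes of a degree-$j_0$ extension (twisted by the $\ab_0$-cocycle) to verify that the averaged data assemble into an honest function $\sigma$ and an honest descended cocycle $\rho'$. This, rather than any single clean identity, is where the bulk of the work lies, and where the hypothesis $\gcd(|\nss_1|,|\ab|)=1$ is genuinely used.
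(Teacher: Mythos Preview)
Your strategy is genuinely different from the paper's, and the high-level idea (transfer/averaging enabled by coprimality, analogous to collapse of the LHS spectral sequence) is the right intuition. However, as you yourself flag, the sketch leaves the hardest part unwritten, and there is a structural concern with the induction.

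The paper does not induct on $|\nss_1|$. Instead it works directly with the product structure: it defines $\kappa=\mc{E}(\rho)$ by averaging $\rho(\q_1\times\q_2)$ over \emph{all} cubes $\q_1\in\cu^{k+1}(\nss_1)$ (this is where coprimality is used, via the observation that $|\cu^{k+1}(\nss_1)|$ and $|\ab|$ are coprime, so the average is well-defined in $\ab$). A short check shows $\mc{E}(\rho)$ is again a cocycle and factors through $\pi_2$. The boundary function $g$ is then given explicitly: $g(y)=\mc{E}'(\rho)(\q)-\mc{E}(\rho)(\q)$ for any $\q$ with $\q(0^{k+1})=y$, where $\mc{E}'$ is a \emph{rooted} average (over $\q_1$ with fixed base vertex). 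The verification that $g$ is well-defined and that $\rho-\mc{E}(\rho)=\mr{d}g$ is done by averaging over tricubes on $\nss_1$, using that the restriction maps between various morphism sets preserve these $\ab$-valued averages. Everything is explicit and one-shot.

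Your inductive step, by contrast, asks for descent of a cocycle through the fibration $p:\nss_1\times\nss_2\to\nss_1'\times\nss_2$. The issue is that this fibration is \emph{not} a product: $\nss_1\times\nss_2$ is a twisted degree-$j_0$ extension of $\nss_1'\times\nss_2$ by $\ab_0$, governed by the structure cocycle of $\nss_1$. So your ``main claim'' is actually more general than the proposition you are trying to prove (extensions rather than products), even though the fibre is simpler. The coherent-trivialisation and descent steps you defer are exactly where this twisting must be confronted, and it is not clear that the single-layer abelian fibre buys enough simplification to offset this. Moreover, your step~1 asserts that a degree-$(k{+}1)$ $\ab$-valued nilspace cocycle on $\mc{D}_{j_0}(\ab_0)$ is a coboundary when $\gcd(|\ab_0|,|\ab|)=1$; this is plausible but is not the same as vanishing of ordinary group cohomology, and averaging over $\ab_0$-translations yields an $\ab_0$-\emph{invariant} cocycle rather than the zero cocycle, so a further argument is needed even there.

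In short: the paper's direct averaging over $\cu^{k+1}(\nss_1)$ exploits the product hypothesis fully and avoids the twisted-extension bookkeeping that your induction would force you into.
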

\begin{proof}[Proof of Proposition \ref{prop:psepkeyprop} using Proposition \ref{prop:cocycledecomp}]
Since $\rho-\kappa$ is a coboundary, by the basic theory of cocycles (see \cite[Corollary  3.3.29]{Cand:Notes1})  we have that $\ns$ is isomorphic to the nilspace which is the extension of $\nss_1\times\nss_2$ by $\ab$ associated with the cocycle $\kappa$ (as per \cite[Definition 3.3.24 and Proposition 3.3.26]{Cand:Notes1}). Given the form of $\kappa$, this extension of $\nss_1\times\nss_2$ by $\ab$ associated with $\kappa$ is isomorphic to $\nss_1\times\nss_2'$ where $\nss_2'$ is the extension of $\nss_2$ by $\ab$ with associated cocycle $\kappa_2$. Indeed, recall that by \cite[Definition 3.3.24]{Cand:Notes1} the extension of $\nss_1\times \nss_2$ by the cocycle $\kappa$ is defined as $M_{\nss_1\times \nss_2}:=\bigcup_{(y_1,y_2)\in \nss_1\times \nss_2}\{\kappa_{(y_1,y_2)}+z:z\in \ab\}$ and the cubes given by \cite[Definition 3.3.25]{Cand:Notes1}. Similarly, the extension of $\nss_2$ by $\kappa_2$ is the nilspace $M_{\nss_2}:=\bigcup_{y_2\in \nss_2}\{{\kappa_2}_{y_2}+z:z\in \ab\}$. A simple calculation shows that the map $\nss_1\times M_{\nss_2} \to M_{\nss_1\times \nss_2}$ defined as $(y_1,{\kappa_2}_{y_2}+z)\mapsto \kappa_{(y_1,y_2)}+z$ is a nilspace isomorphism.
\end{proof}
\noindent We thus come to the core of the proof of Theorem \ref{thm:p-sep}, which consists in confirming the cocycle decomposition in Proposition \ref{prop:cocycledecomp}. We shall ``average out" the $\nss_1$ variables of the cocycle $\rho$ to obtain a cocycle $\kappa$ with the claimed property of factoring through $\pi_2$, and then prove that $\rho-\kappa$ is a coboundary. To achieve this we need to develop certain useful averaging operators. To this end we shall use the following facts. 

\begin{lemma}\label{lem:coprime-hom}
Let $\ns$ be a $k$-step finite nilspace and let $\ab$ be a finite abelian group such that $|\ab|, |\ns|$ are coprime. Let $n\in \mb{Z}_{\ge 0}$, let $P\subset \db{n}:=\{0,1\}^n$ such that $P$ has the extension property in $\db{n}$ \textup{(}see \cite[Definition 3.1.3]{Cand:Notes1}\textup{)}, and let $P'\subset P$ have the extension property in $P$. Then for every morphism $f:P'\to \ns$, the cardinalities $|\hom_f(P,\ns)|$ and $|\ab|$ are coprime.\footnote{See \cite[Definition 3.3.10]{Cand:Notes1} for the definition of $\hom_f(P,\ns)$.}
\end{lemma}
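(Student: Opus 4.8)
The plan is to count the morphisms extending $f:P'\to\ns$ step by step, adding one vertex of $P\setminus P'$ at a time in an order compatible with the extension property, and to observe that each individual extension step multiplies the count by a divisor of $|\ns|$, hence by something coprime to $|\ab|$. Concretely, since $P'$ has the extension property in $P$, one can enumerate $P\setminus P' = \{v_1,\dots,v_t\}$ so that for each $i$ the set $P'\cup\{v_1,\dots,v_{i-1}\}$ has the extension property in $P'\cup\{v_1,\dots,v_i\}$; equivalently, $v_i$ together with the already-chosen vertices forms (after an automorphism of the cube) a configuration in which $v_i$ is the top vertex of a subcube $\db{j_i}$ whose lower faces already lie in the domain. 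The key point is then to show that, given a morphism already defined on $P'\cup\{v_1,\dots,v_{i-1}\}$, the number of ways to extend it to $v_i$ is exactly the size of a fibre of a restriction map between cube sets of $\ns$, and that this fibre size divides $|\ns|$.

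The main step is therefore the following local statement: if $Q'\subset Q$ with $Q'$ having the extension property in $Q$ and $|Q\setminus Q'|=1$, say $Q = Q'\cup\{v\}$ where $v$ is the top vertex of a copy of $\db{j}$ inside $Q$ with all of $\db{j}\setminus\{v\}$ contained in $Q'$, then for any morphism $g:Q'\to\ns$ the number of extensions of $g$ to $Q$ is $|\hom_{g|_{\db{j}\setminus\{v\}}}(\db{j},\ns)|$, which by the completion/fibre description of the cube sets of $\ns$ (using that $\ns$ is a $k$-step nilspace, so such completions exist and the set of completions of a fixed corner is a torsor over, or at least has cardinality dividing, $|\ns_{k}|\cdot|\ns_{k-1}|\cdots$, in any case a product of structure-group orders, hence a divisor of $|\ns|$) has cardinality dividing $|\ns|$. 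Here I would invoke the standard fact from \cite[\S 3.2--3.3]{Cand:Notes1} that completing a cube corner in a finite nilspace has a number of solutions equal to a product of orders of (subgroups of) structure groups; in particular this count is coprime to $|\ab|$ since $|\ns|$ is. Multiplying these local counts over $i=1,\dots,t$ gives $|\hom_f(P,\ns)| = \prod_{i=1}^t N_i$ with each $N_i\mid|\ns|$, so $|\hom_f(P,\ns)|$ is a product of integers each coprime to $|\ab|$, hence coprime to $|\ab|$.

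The main obstacle I anticipate is making precise and correctly citing the claim that the number of completions of a fixed corner configuration in a finite nilspace divides $|\ns|$ (rather than merely being bounded by it); this requires carefully peeling off the top structure group $\ns_k$ and arguing that once the image in the $(k-1)$-step factor is fixed, the remaining freedom is a coset (possibly empty, possibly a single coset) of a subgroup of a power of $\ns_k$, and then inducting on the step. An alternative, perhaps cleaner, route avoiding the exact divisibility bookkeeping is: observe that $\hom_f(P,\ns)$ is always either empty or a "fibre bundle" whose fibres, at each peeling stage, are cosets of subgroups of $\ab$-coprime-order abelian groups (the structure groups of $\ns$), so its cardinality is a product of orders of such subgroups and is therefore coprime to $|\ab|$; non-emptiness itself follows because $P'$ has the extension property in $P$ and $f$ is assumed to be a morphism, which guarantees at least one extension exists. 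Either way the arithmetic conclusion is the same.
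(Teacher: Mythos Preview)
Your main plan—the vertex-by-vertex enumeration—has two gaps. First, it is not clear that the extension property of $P'$ in $P$ always yields an enumeration $v_1,\dots,v_t$ of $P\setminus P'$ with each intermediate inclusion again having the extension property; this holds for \emph{simplicial} cubespaces (see \cite[Definition 3.1.4, Lemma 3.1.5]{Cand:Notes1}), but simplicial implies extension property, not conversely, and you would need to justify the existence of such a filtration in general. Second, and more seriously, even for a single-vertex step $Q'\subset Q=Q'\cup\{v\}$ with the extension property, your claim that the number of extensions of $g:Q'\to\ns$ equals the corner-completion count $|\hom_{g|_{\db{j}\setminus\{v\}}}(\db{j},\ns)|$ for \emph{one} subcube $\db{j}$ is unjustified: the vertex $v$ may lie simultaneously in several subcubes of $Q$ whose remaining vertices are already in $Q'$, and all of those cube constraints must be satisfied. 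The extension set is then an intersection of several corner-completion fibres, and showing that this intersection has size coprime to $|\ab|$ is precisely the problem you are trying to solve.

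Your ``alternative, perhaps cleaner, route'' is the correct one and is exactly what the paper does. The paper invokes \cite[Lemma 3.3.11]{Cand:Notes1}: the set $\hom_f(P,\ns)$ is a $k$-fold abelian bundle whose $i$-th structure group is $\hom_{P'\to 0}(P,\mc{D}_i(\ab_i(\ns)))$, so $|\hom_f(P,\ns)|=\prod_{i=1}^k|\hom_{P'\to 0}(P,\mc{D}_i(\ab_i(\ns)))|$ and one only needs each factor coprime to $|\ab|$. This follows from two surjective restriction homomorphisms: first $\hom(\db{n},\mc{D}_i(\ab_i))\twoheadrightarrow\hom(P,\mc{D}_i(\ab_i))$ (surjective by the extension property of $P$ in $\db{n}$), whence $|\hom(P,\mc{D}_i(\ab_i))|$ divides a power of $|\ab_i(\ns)|$; second $\hom(P,\mc{D}_i(\ab_i))\twoheadrightarrow\hom(P',\mc{D}_i(\ab_i))$, whose kernel is exactly $\hom_{P'\to 0}(P,\mc{D}_i(\ab_i))$. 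Thus one peels \emph{structure groups} rather than vertices, and both obstacles above disappear at once; the vertex enumeration is entirely unnecessary since the bundle lemma applies to $\hom_f(P,\ns)$ directly.
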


\begin{proof}
By \cite[Lemma 3.3.11]{Cand:Notes1}, the set of restricted morphisms $\hom_f(P,\ns)$ is a $k$-fold abelian bundle whose structure groups are $\hom_{P'\to 0}(P,\mc{D}_i(\ab_i(\ns)))$ for $i\in[k]$. Hence, it suffices to prove that for every $i\in[k]$ the cardinalities $|\hom_{P'\to 0}(P,\mc{D}_i(\ab_i(\ns)))|$ and $|\ab|$ are coprime. Let $\phi_1:\hom\big(\db{n},\mc{D}_i(\ab_i(\ns))\big)\to \hom\big(P,\mc{D}_i(\ab_i(\ns))\big)$ be the restriction map, which is a surjective homomorphism by the extension property. Since the order of $\hom(\db{n},\mc{D}_i(\ab_i(\ns)))=\cu^n(\mc{D}_i(\ab_i(\ns)))$ equals $|\ab_i(\ns)|^{t}$ for some $t\in \mb{N}$, the order of $\hom(P,\mc{D}_i(\ab_i(\ns)))$ is also coprime with $|\ab|$. Let $\phi_2:\hom(P,\mc{D}_i(\ab_i(\ns)))\to \hom(P',\mc{D}_i(\ab_i(\ns)))$ be also the restriction map, which is also a surjective homomorphism. Now note that $\hom_{P'\to 0}(P,\mc{D}_i(\ab_i(\ns)))=\ker(\phi_2)$, and thus its order is coprime with $|\ab|$.\end{proof}
\begin{remark}\label{rem:avZ}
From now on, for any function $f$ defined on a finite set $X$ and taking values in a finite abelian group $\ab$ such that $|X|$ and $|\ab|$ are coprime, the averaging notation $\mb{E}_{x\in X} f(x)$ will always denote the unique element $z\in \ab$ such that $|X|\, z=\sum_{x\in X} f(x)$.
\end{remark}
\begin{corollary}\label{cor:mes-presev-with-torsion}
Let $\ns$ be a $k$-step finite nilspace and let $\ab$ be a finite abelian group of order coprime with $|\ns|$. Let $n\in \mb{Z}_{\ge 0}$, let $P\subset \db{n}$ be a set with the extension property in $\db{n}$ and let $P_1,P_2$ be a \emph{good pair}\footnote{See \cite[\S 3.1.1]{Cand:Notes1} and \cite[Definition 2.2.13]{Cand:Notes2}.} of subsets of $P$. Let $f:P_1\to \ns$ be a morphism and let $\psi:\hom_f(P,\ns)\to \hom_{f|_{P_1\cap P_2}}(P_2,\ns)$ be the restriction map. Then for every function $h:\hom_{f|_{P_1\cap P_2}}(P_2,\ns)\to \ab$ we have $
\mb{E}_{x\in \hom_{f|_{P_1\cap P_2}}(P_2,\ns)}\, h(x) = \mb{E}_{x\in \hom_f(P,\ns)} h(\psi(x))$.
\end{corollary}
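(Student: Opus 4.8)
The plan is to reduce the statement to a counting fact about the fibers of the restriction map $\psi$, namely that $\psi:\hom_f(P,\ns)\to\hom_{f|_{P_1\cap P_2}}(P_2,\ns)$ is surjective and that all of its fibers have the same cardinality, call it $c$, with $c$ coprime to $|\ab|$. Granting this, the identity is immediate: for any $h$, the multiset $\{h(\psi(x)):x\in\hom_f(P,\ns)\}$ consists of each value $h(y)$ (for $y$ in the codomain) repeated exactly $c$ times, so $\sum_{x} h(\psi(x)) = c\sum_{y} h(y)$; writing $|\hom_f(P,\ns)| = c\,|\hom_{f|_{P_1\cap P_2}}(P_2,\ns)|$ and using that all three of $c$, $|\hom_f(P,\ns)|$ and $|\hom_{f|_{P_1\cap P_2}}(P_2,\ns)|$ are coprime to $|\ab|$ (Lemma \ref{lem:coprime-hom}), one checks directly from the defining property of the averaging notation in Remark \ref{rem:avZ} that both sides equal the unique $z\in\ab$ with $c\,|\hom_{f|_{P_1\cap P_2}}(P_2,\ns)|\,z=\sum_x h(\psi(x))$.

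The two ingredients to establish are therefore: (i) $\psi$ is surjective with equicardinal fibers, and (ii) the fiber size $c$ is coprime to $|\ab|$. For (i), surjectivity of $\psi$ is exactly the good-pair extension property: since $(P_1,P_2)$ is a good pair in $P$, any morphism $P_2\to\ns$ agreeing with $f$ on $P_1\cap P_2$ extends to a morphism $P\to\ns$ agreeing with $f$ on $P_1$ (this is the standard consequence of \cite[\S 3.1.1]{Cand:Notes1} / \cite[Definition 2.2.13]{Cand:Notes2}). For the equicardinality of fibers, I would identify the fiber $\psi^{-1}(g)$ over a given $g\in\hom_{f|_{P_1\cap P_2}}(P_2,\ns)$ with the set of morphisms $P\to\ns$ restricting to $f$ on $P_1$ and to $g$ on $P_2$, i.e. with $\hom_{f\cup g}(P,\ns)$ where $f\cup g$ is the common extension to $P_1\cup P_2$; by \cite[Lemma 3.3.11]{Cand:Notes1} this is a $k$-fold abelian bundle whose structure groups $\hom_{(P_1\cup P_2)\to 0}(P,\mc{D}_i(\ab_i(\ns)))$ do not depend on the choice of $g$ (they depend only on the pair $(P_1\cup P_2, P)$, not on the morphism), so all fibers have the same cardinality $c=\prod_{i=1}^k |\hom_{(P_1\cup P_2)\to 0}(P,\mc{D}_i(\ab_i(\ns)))|$.

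For (ii) I would run essentially the argument already used in the proof of Lemma \ref{lem:coprime-hom}: each structure group $\hom_{(P_1\cup P_2)\to 0}(P,\mc{D}_i(\ab_i(\ns)))$ is the kernel of the restriction homomorphism $\hom(P,\mc{D}_i(\ab_i(\ns)))\to\hom(P_1\cup P_2,\mc{D}_i(\ab_i(\ns)))$, and since $\hom(\db{n},\mc{D}_i(\ab_i(\ns)))$ has order a power of $|\ab_i(\ns)|$ (hence coprime to $|\ab|$, as $|\ab_i(\ns)|$ divides $|\ns|$), every subquotient of it — in particular this kernel — has order coprime to $|\ab|$; here one uses that $P$ and $P_1\cup P_2$ have the extension property (in $\db{n}$ and in $P$ respectively) so that the relevant restriction maps are surjective. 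Thus $c$ is coprime to $|\ab|$, completing the argument.

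The only delicate point is bookkeeping: making sure that "$f\cup g$" is a well-defined morphism on $P_1\cup P_2$ (which is where the good-pair hypothesis, via $f$ and $g$ agreeing on $P_1\cap P_2$ and the structure of good pairs, is genuinely needed) and that $P_1\cup P_2$ indeed has the extension property in $P$ so that Lemma \ref{lem:coprime-hom}'s argument applies verbatim to it. I expect this verification — essentially checking the hypotheses of \cite[Lemma 3.3.11]{Cand:Notes1} and \cite[Lemma 3.3.11]{Cand:Notes1}'s prerequisites for the pair $(P_1\cup P_2,P)$ — to be the main, though routine, obstacle; the rest is the elementary coprime-averaging manipulation described in the first paragraph.
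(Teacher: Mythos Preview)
Your proposal is correct but takes a different route from the paper. You re-derive the equicardinal-fiber property of $\psi$ from scratch via \cite[Lemma 3.3.11]{Cand:Notes1}, then do the coprime-averaging computation by hand. The paper instead lifts $h$ to a $\mb{Z}^r$-valued function through a cross-section $s:\ab\to\mb{Z}^r$ of a surjection $\varphi:\mb{Z}^r\to\ab$, applies the already-established measure-preserving result for good pairs \cite[Lemma 2.2.14]{Cand:Notes2} componentwise over $\mb{Z}$ (where ordinary real averages make sense), obtains the integer identity $|\hom_f(P,\ns)|\sum s\co h = |\hom_{f|_{P_1\cap P_2}}(P_2,\ns)|\sum s\co h\co\psi$, and then pushes it down through $\varphi$. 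The paper's argument is shorter and never reopens the fiber analysis, treating the good-pair machinery as a black box; your argument is more self-contained but, as you correctly flag, requires checking that $P_1\cup P_2$ has the extension property in $P$ and that $f\cup g$ is a morphism there---which is essentially the finite-nilspace content of \cite[Lemma 2.2.14]{Cand:Notes2}, so you are in effect re-proving that lemma rather than quoting it.
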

\begin{proof}
By Lemma \ref{lem:coprime-hom} we know that both $|\hom_{f|_{P_1\cap P_2}}(P_2,\ns)|$ and $|\hom_f(P,\ns)|$ are coprime with $|\ab|$, so the averages in the conclusion are well-defined. Let  $\varphi:\mb{Z}^r\to \ab$ be a surjective homomorphism, and let $s=(s_1,\ldots,s_r):\ab\to \mb{Z}^r$ be a cross-section. Then, since $s_j\co h$ is a $\mb{Z}$-valued function on $\hom_{f|_{P_1\cap P_2}}(P_2,\ns)$, by \cite[Lemma 2.2.14]{Cand:Notes2} we  have in $\mb{R}$ the following equality: $\mb{E}_{x\in \hom_{f|_{P_1\cap P_2}}(P_2,\ns)} s_j\co h(x) = \mb{E}_{x\in \hom_f(P,\ns)} s_j\co h\co \psi(x)$ for every $j\in [r]$. Using this in each of the $r$ components of $\mb{Z}^r$ we deduce that in $\mb{Z}^r$ we have the equality
\[
|\hom_f(P,\ns)|\sum_{x\in \hom_{f|_{P_1\cap P_2}}(P_2,\ns)} s\co h(x) = |\hom_{f|_{P_1\cap P_2}}(P_2,\ns)|\sum_{x\in \hom_f(P,\ns)} s\co h\co \psi(x).
\]
Applying $\varphi$ to both sides of this equality, the result follows.
\end{proof}
\noindent Lemma \ref{lem:coprime-hom} justifies the definition of the following operator on functions $f:\cu^{k+1}(\nss)\to\ab$:
\begin{equation}\label{eq:avop1}
\mc{E}(f)(\q_1\times\q_2):=  \mb{E}_{\q_1'\in \cu^{k+1}(\nss_1)} f(\q_1'\times\q_2).  
\end{equation}
\begin{lemma}\label{lem:kappa}
If $\rho:\cu^{k+1}(\nss_1\times\nss_2)\to\ab$ is a cocycle then $\mc{E}(\rho)$ is a cocycle which factors through $\pi_2^{\db{k+1}}$.
\end{lemma}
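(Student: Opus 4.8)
\textbf{Proof proposal for Lemma \ref{lem:kappa}.}

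The plan is to verify directly that $\mc{E}(\rho)$ satisfies the cocycle identity and the factoring-through-$\pi_2$ property, using the averaging operator \eqref{eq:avop1} together with the measure-preservation fact from Corollary \ref{cor:mes-presev-with-torsion}. First I would recall precisely what it means for $\rho:\cu^{k+1}(\nss_1\times\nss_2)\to\ab$ to be a cocycle in the sense of \cite[\S 3.3.3]{Cand:Notes1}: it assigns to each cube $\q\in\cu^{k+1}(\nss_1\times\nss_2)$ an element of $\ab$ in a way that is compatible with the face maps and the groupoid-like structure on cubes (more precisely, $\rho$ vanishes on degenerate cubes and its "alternating sum" over the faces of any $(k+2)$-cube is zero, or equivalently it is a $1$-cocycle for the relevant complex). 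The key observation is that $\cu^{k+1}(\nss_1\times\nss_2)=\cu^{k+1}(\nss_1)\times\cu^{k+1}(\nss_2)$ as sets, compatibly with all the combinatorial operations, so that averaging only over the $\nss_1$-coordinate is a well-defined operation that commutes with every structural map on the $\nss_2$-side.

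The argument has two parts. For the \emph{factoring} claim: I need to show that $\mc{E}(\rho)(\q_1\times\q_2)$ depends only on $\q_2$. Writing $\mc{E}(\rho)(\q_1\times\q_2)=\mb{E}_{\q_1'\in\cu^{k+1}(\nss_1)}\rho(\q_1'\times\q_2)$, the right-hand side manifestly has no dependence on $\q_1$; the only subtlety is that the average is the one of Remark \ref{rem:avZ} and needs to be well-defined, which holds because $|\cu^{k+1}(\nss_1)|=|\nss_1|^t$ for some $t$ (this is the $n$-cube count, an instance of Lemma \ref{lem:coprime-hom} with $P=\db{n}$, $P'=\emptyset$) is coprime to $|\ab|$ by hypothesis. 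Thus we may set $\kappa_2(\q_2):=\mb{E}_{\q_1'\in\cu^{k+1}(\nss_1)}\rho(\q_1'\times\q_2)$, a function $\cu^{k+1}(\nss_2)\to\ab$, and $\mc{E}(\rho)=\kappa_2\co\pi_2^{\db{k+1}}$ by definition of $\pi_2$, which is exactly factoring through $\pi_2^{\db{k+1}}$.

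For the \emph{cocycle} claim: I would verify the cocycle identity for $\kappa_2$ (equivalently for $\mc{E}(\rho)$) by applying the cocycle identity for $\rho$ pointwise in the $\nss_1$-variable and then averaging. The cocycle identity is an equation of the form $\sum_{j} \pm\,\rho(\text{$j$-th face of $Q$})=0$ ranging over the faces of an arbitrary $(k+2)$-dimensional cube $Q$ of $\nss_1\times\nss_2$; since $Q=Q_1\times Q_2$ and the faces of $Q$ are products of faces of $Q_1$ with the corresponding faces of $Q_2$, fixing $Q_1$ and summing over $j$ gives $0$ for every fixed choice of $Q_1$, and hence the average over $Q_1\in\cu^{k+2}(\nss_1)$ (again well-defined by coprimality) is $0$. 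The only point requiring care is matching the averaging sets: in the cocycle identity each face of $Q$ is a $(k+1)$-cube, and we must check that averaging the value $\rho(Q_1'\times(\text{face of }Q_2))$ over $Q_1'\in\cu^{k+1}(\nss_1)$ is consistent across the different faces appearing in the identity. This is where Corollary \ref{cor:mes-presev-with-torsion} enters: the faces of a $(k+2)$-cube $Q_1$ of $\nss_1$ are obtained by restriction maps of the type $\psi:\hom_f(P,\nss_1)\to\hom_{f|}(P_2,\nss_1)$ for suitable good pairs, and the corollary guarantees that averaging over $\cu^{k+1}(\nss_1)=\hom(\db{k+1},\nss_1)$ of a pulled-back function equals averaging over the smaller face-cube set, so all the per-face averages in the identity can be taken uniformly over a single copy of $\cu^{k+1}(\nss_1)$. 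One should also check $\mc{E}(\rho)$ vanishes on degenerate $(k+1)$-cubes of $\nss_2$, which is immediate since for such $\q_2$ every $\q_1'\times\q_2$ is degenerate and $\rho$ vanishes there, so the average is $0$.

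The main obstacle I anticipate is purely bookkeeping rather than conceptual: correctly identifying the combinatorial decomposition $\cu^{k+2}(\nss_1\times\nss_2)\cong\cu^{k+2}(\nss_1)\times\cu^{k+2}(\nss_2)$ at the level of the face structure used in the cocycle identity, and making sure that the averaging operator commutes with the restriction maps indexing the faces — which is precisely the content we have isolated into Corollary \ref{cor:mes-presev-with-torsion}. Once that commutation is in hand, both assertions follow by "averaging the $\rho$-identity in the $\nss_1$-direction". I would therefore structure the write-up as: (1) well-definedness of the average via the cube-count coprimality; (2) the factoring claim, essentially by inspection; (3) the cocycle identity, by fixing the $\nss_1$-cube, invoking the $\rho$-identity, and averaging, citing Corollary \ref{cor:mes-presev-with-torsion} for the consistency of the averaging sets across faces.
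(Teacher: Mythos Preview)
Your approach is the same as the paper's: factoring is immediate from the definition of $\mc{E}$, and the cocycle property is obtained by averaging the corresponding identity for $\rho$ over the $\nss_1$-component, invoking Corollary~\ref{cor:mes-presev-with-torsion} to reconcile the averaging sets. Two points to correct. First, the cocycle axioms in \cite[Definition~3.3.14]{Cand:Notes1} are not ``alternating sum over the $2(k+2)$ faces of a $(k+2)$-cube vanishes'' but rather (i) sign-equivariance under automorphisms of $\db{k+1}$ and (ii) additivity under concatenation, $\rho(\tilde\q)=\rho(\q)+\rho(\q')$ when $\q\prec\q'$ with concatenation $\tilde\q$; the paper checks (i) by linearity of $\mc{E}$ and checks (ii) by averaging over the set $Q=\{(q_1,q_1'):q_1\prec q_1'\}\subset\cu^{k+1}(\nss_1)^2$ (morphisms from the union of two adjacent $(k+1)$-faces of $\db{k+2}$) and then using Corollary~\ref{cor:mes-presev-with-torsion} three times to identify each term with the appropriate $\mc{E}(\rho)$-value. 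Your scheme would work once rewritten against this definition, but as stated the identity you plan to average is not the defining one. Second, the claim $|\cu^{k+1}(\nss_1)|=|\nss_1|^t$ is false for step $\geq 2$; the coprimality you need is precisely the instance of Lemma~\ref{lem:coprime-hom} (with $P'=\emptyset$) that you already cite, so just drop the incorrect formula.
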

\begin{proof}
The fact that $\mc{E}(\rho)$ factors through $\pi_2^{\db{k+1}}$ follows clearly from \eqref{eq:avop1}, since for any $\q,\q'\in \cu^{k+1}(\nss_1\times\nss_2)$ satisfying $\q_2=\q_2'$, we have that $\mc{E}(\rho)(\q)$ and $\mc{E}(\rho)(\q')$ are averages of $\rho$ over the same set $\cu^{k+1}(\nss_1)\times\{\q_2\}$, so $\mc{E}(\rho)(\q) = \mc{E}(\rho)(\q')$.  

To see that $\mc{E}(\rho)$ is a cocycle, we check that it satisfies the two properties in \cite[Definition 3.3.14]{Cand:Notes1}. The first property follows from the additivity of the averaging operator $\mc{E}$ and the fact that $\rho$ itself satisfies the property. To check the second property (additivity relative to concatenations), let $\q=\q_1\times\q_2$ be adjacent to $\q''=\q_1''\times\q_2''$, with concatenation $\tilde\q=\tilde\q_1\times\tilde\q_2$ (in fact only second-component cubes matter here). Recalling the notation $q\prec q'$ for adjacency of cubes $q,q'$ (see \cite[Definition 3.1.6]{Cand:Notes1}), let
\begin{equation}\label{eq:Qset1}
Q:=\{(q_1,q_1')\in \cu^{k+1}(\nss_1)^2:  q_1\prec q_1'\}.
\end{equation}
Note that $Q$ is the morphism set $\hom(P,\nss_1)$ where $P$ is the union of two adjacent $(k+1)$-faces $F_1=\{v:v\sbr{1}=0\}$, $F_2=\{v:v\sbr{2}=0\}$ in $\db{k+2}$. Let $D:=\{v:v\sbr{1}=1-v\sbr{2}\}=P\setminus (F_1\cap F_2)$. As $\emptyset$ and $D$ form a good pair,\footnote{Indeed, for any abelian group $\ab$ and $f\in \hom(D,\mc{D}_k(\ab))$, we can extend $f$ to $\tilde{f}\in \hom(P,\mc{D}_k(\ab))$  by setting $\tilde{f}(0,0,v\sbr{3},\ldots,v\sbr{k+1}):=f(1,0,v\sbr{3},\ldots,v\sbr{k+1})$.} by Corollary \ref{cor:mes-presev-with-torsion}  we have that $\mc{E}(\rho)(\tilde \q) \; = \mb{E}_{f=(q_1,q_1')\in Q}\; \rho(f|_D\times \tilde\q_2)$ (note that this follows also from the idempotence property for nilspaces, see \cite[Proposition 3.6]{CScouplings}). Using that $f|_D =q_1\prec q_1'$ and \cite[Definition 3.3.4 (ii)]{Cand:Notes1} we get that $\mc{E}(\rho)(\tilde \q) \; = \mb{E}_{f=(q_1,q_1')\in Q}\; \rho(f|_D\times \tilde\q_2) = \mb{E}_{f=(q_1,q_1')\in Q}\; \rho(q_1\times \q_2) +\rho(q_1'\times \q_2'')$. Moreover, for $i\in[2]$ we have that $\emptyset$ and $F_i$ also form a good pair. Using Corollary \ref{cor:mes-presev-with-torsion} again, we have 
$\mc{E}(\rho)(\tilde \q) =\mb{E}_{\q_1'\in \cu^{k+1}(\nss_1)} \; \rho(\q_1'\times \q_2)+\mb{E}_{\q_1'\in \cu^{k+1}(\nss_1)} \; \rho(\q_1'\times \q_2')=\mc{E}(\rho)(\q)+ \mc{E}(\rho)(\q'')$, 
which completes the proof.
\end{proof}

\noindent The second operator that we shall use is the following, which averages cocycles over a smaller set than $\mc{E}$, namely over essentially a \emph{rooted} cube set $\cu^n_y(\nss_1):=\{\q\in\cu^n(\nss_1):\q(0^n)=y\}$. For any $f:\cu^{k+1}(\nss)\to\ab$ we define (using again Lemma \ref{lem:coprime-hom})
\begin{equation}\label{eq:avop2}
\mc{E}'(f)(\q):= \mb{E}_{\q_1'\in \cu^{k+1}_{\q_1(0^{k+1})}(\nss_1)} f(\q_1'\times\q_2)\; \textrm{ for any $\q=\q_1\times\q_2\in\cu^{k+1}(\nss)$.}  
\end{equation}
Note that, unlike for $\mc{E}$, here for a cocycle $\rho$ the map $\mc{E}'(\rho)$ is not necessarily a cocycle; indeed the additivity relative to concatenations can fail. However, the difference $\mc{E}(\rho)-\mc{E}'(\rho)$ has the following property, which will be another central ingredient in the proof of Proposition \ref{prop:cocycledecomp}.
\begin{lemma}\label{lem:fact1}
If $\q,\q'\in\cu^{k+1}(\nss)$ are adjacent along an upper $k$-face $F$ of $\db{k+1}$, 
then, letting $\q''$ be the concatenation of $\q,\q'$ along this face \textup{(}where $\q\prec_F\q'$\textup{)}, we have\footnote{A $k$-upper face $F\subset \db{k+1}$ is a set of the form $\{v\in \db{k+1}:v\sbr{i}=1\}$ for some $i\in[k]$. For  $\q,\q'\in \cu^{k+1}(\nss)$ we write $\q\prec_F \q'$ if $\q(v)= \q'(\sigma_i(v))$ for every $v\in F$, where $\sigma_i(v)$ switches the coordinate $v\sbr{i}$ to $1-v\sbr{i}$. The concatenation $\q''$ of $\q,\q'$ along $F$ is then defined by $\q''(v):=\q(v)$ when $v\sbr{i}=0$ and $\q''(v):=\q'(v)$ for $v\in F$. This generalizes the usual adjacency $\q\prec \q'$, which is the special case of $\q\prec_F \q'$ with $F=\{v:v\sbr{k+1}=1\}$.}
\begin{equation}
[\mc{E}(\rho)-\mc{E}'(\rho)](\q'')=[\mc{E}(\rho)-\mc{E}'(\rho)](\q).    
\end{equation}
\end{lemma}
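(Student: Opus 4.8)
The plan is to compute both sides of the claimed identity directly from the definitions \eqref{eq:avop1} and \eqref{eq:avop2} of $\mc{E}(\rho)$ and $\mc{E}'(\rho)$, using the cocycle property of $\rho$ to cancel the unwanted terms. First I would fix $i\in[k]$ with $F=\{v\in\db{k+1}:v\sbr{i}=1\}$, and write $\q=\q_1\times\q_2$, $\q'=\q_1'\times\q_2'$, $\q''=\q_1''\times\q_2''$ for the first- and second-component decompositions, noting that $\q_1''$ is the concatenation of $\q_1,\q_1'$ along $F$ and similarly for the second component. Since the concatenation happens along an \emph{upper} $k$-face, the vertex $0^{k+1}$ lies on the $v\sbr{i}=0$ side, so $\q_1''(0^{k+1})=\q_1(0^{k+1})=:y$; this means the rooted cube set over which $\mc{E}'$ averages is the \emph{same} for $\q''$ and for $\q$, namely $\cu^{k+1}_y(\nss_1)$.

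The key step is then the following reduction for a single morphism $q_1'\in\cu^{k+1}(\nss_1)$. By the cocycle property of $\rho$ (additivity relative to concatenations, \cite[Definition 3.3.14]{Cand:Notes1}), applied in the $F$-direction to the adjacent cubes $q_1'\times\q_2$ and $q_1'\times\q_2'$ (which are adjacent along $F$ with concatenation $q_1'\times\q_2''$, because $\q_2\prec_F\q_2'$ and the first component is constant), we get
\begin{equation}\label{eq:factcocy}
\rho(q_1'\times\q_2'')=\rho(q_1'\times\q_2)+\rho(q_1'\times\q_2').
\end{equation}
Averaging \eqref{eq:factcocy} over $q_1'\in\cu^{k+1}(\nss_1)$ gives $\mc{E}(\rho)(\q'')=\mc{E}(\rho)(\q)+\mc{E}(\rho)(\q')$ (recovering the cocycle identity for $\mc{E}(\rho)$ already noted in Lemma \ref{lem:kappa}), while averaging instead over $q_1'\in\cu^{k+1}_y(\nss_1)$ gives $\mc{E}'(\rho)(\q'')=\mc{E}'(\rho)(\q)+\widetilde{\rho}$, where $\widetilde{\rho}:=\mb{E}_{q_1'\in \cu^{k+1}_y(\nss_1)}\rho(q_1'\times\q_2')$. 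Subtracting, the $\q'$-contributions must be shown to match: precisely, $[\mc{E}(\rho)-\mc{E}'(\rho)](\q'')=[\mc{E}(\rho)-\mc{E}'(\rho)](\q)+\big(\mc{E}(\rho)(\q')-\widetilde{\rho}\big)$, so it remains to prove $\mc{E}(\rho)(\q')=\widetilde\rho$, i.e.\ that $\mb{E}_{q_1'\in \cu^{k+1}(\nss_1)}\rho(q_1'\times\q_2')=\mb{E}_{q_1'\in\cu^{k+1}_y(\nss_1)}\rho(q_1'\times\q_2')$.

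This last equality is where I expect the real content to lie, and it should follow from Corollary \ref{cor:mes-presev-with-torsion} (the measure-preservation property for rooted versus unrooted cube sets, in the coprime-torsion setting): taking $P=\db{k+1}$, $P_1=\{0^{k+1}\}$, $P_2=\db{k+1}$, and $f$ the constant morphism sending $0^{k+1}$ to $y$, the restriction map $\psi:\hom_f(\db{k+1},\nss_1)\to\hom(\db{k+1},\nss_1)$ — wait, more carefully, one takes $P_1=P_2=\db{k+1}$ inside a larger $P$, or invokes directly that $\{0^{k+1}\}$ and $\db{k+1}$ form a good pair so that averaging a function of $\cu^{k+1}(\nss_1)$ over the rooted subset $\cu^{k+1}_y(\nss_1)$ agrees with averaging over all of $\cu^{k+1}(\nss_1)$, this being exactly the idempotence/measure-preservation phenomenon and valid here because $|\nss_1|$ is coprime to $|\ab|$. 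The main obstacle is thus bookkeeping: correctly identifying the relevant good pair and morphism sets so that Corollary \ref{cor:mes-presev-with-torsion} applies cleanly, and keeping careful track of which vertex is the root and on which side of $F$ it sits. Once that is set up, the cancellation in the display above is immediate and the lemma follows.
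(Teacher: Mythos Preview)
There is a genuine gap in the final step. The identity you reduce to, namely
\[
\mb{E}_{q_1'\in \cu^{k+1}_y(\nss_1)}\rho(q_1'\times\q_2')\;=\;\mb{E}_{q_1'\in \cu^{k+1}(\nss_1)}\rho(q_1'\times\q_2'),
\]
is \emph{false} in general: it asserts precisely that $\mc{E}'(\rho)=\mc{E}(\rho)$ (since the left side is $\mc{E}'(\rho)$ of any cube with first component rooted at $y$ and second component $\q_2'$, while the right side is $\mc{E}(\rho)$ of that same cube). If this held, the function $g$ in the proof of Proposition~\ref{prop:cocycledecomp} would vanish identically, and $\rho-\mc{E}(\rho)$ would be the zero coboundary, which is certainly not the case for a generic cocycle. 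Corollary~\ref{cor:mes-presev-with-torsion} does \emph{not} say that rooted and unrooted averages of an arbitrary function agree; it only equates averages related by a restriction map coming from a good pair, and there is no such map taking $\cu^{k+1}_y(\nss_1)$ onto $\cu^{k+1}(\nss_1)$ with uniform fibers.

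The underlying problem is your choice to keep the first component $q_1'$ \emph{constant} across the two cubes being concatenated. This forces the ``extra'' term to be a \emph{rooted} average, which you then cannot match with the unrooted $\mc{E}(\rho)(\q')$. The paper's proof instead averages over \emph{pairs} $(q_1,q_1')$ of adjacent cubes in $\nss_1$ with only $q_1$ rooted at $y$; the cocycle identity applied to $(q_1\times\q_2)\prec(q_1'\times\q_2')$ then produces three terms whose averages are $\mc{E}'(\rho)(\q'')$, $\mc{E}'(\rho)(\q)$, and $\mc{E}(\rho)(\q')$ respectively, the last one being genuinely unrooted because the projection $(q_1,q_1')\mapsto q_1'$ from this rooted-pair set surjects onto all of $\cu^{k+1}(\nss_1)$ (via a good-pair argument, since $q_1'$ carries no root constraint). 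The extra degree of freedom in the first component is exactly what makes the cancellation work.
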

\begin{proof}
Note that $\mc{E}'(\rho)$ satisfies the first property in \cite[Definition 3.3.14]{Cand:Notes1} for automorphisms that fix $0^{k+1}$, so we can suppose without loss of generality that the upper face in question is $\{v\in\db{k+1}:v\sbr{k+1}=1\}$ (as in the usual definition of adjacency and concatenations \cite[Definition 3.1.16]{Cand:Notes1}). Let $\q=\q_1\times\q_2$ and similarly $\q'=\q_1'\times\q_2'$, $\q''=\q_1''\times\q_2''$.

We then define the following \emph{rooted} variant of \eqref{eq:Qset1}:
\begin{equation}\label{eq:Qset2}
Q'(\q_1):=\{(q_1,q_1')\in\cu^{k+1}(\nss_1)^2: q_1\prec q_1'\textrm{ and }q_1(0^{k+1})=\q_1(0^{k+1})\}.
\end{equation}
Since the cocycle $\mc{E}(\rho)$ satisfies $\mc{E}(\rho)(\q'')=\mc{E}(\rho)(\q)+\mc{E}(\rho)(\q')$, it suffices to prove that
\begin{equation}
    \mc{E}'(\rho)(\q'') = \mc{E}'(\rho)(\q)+\mc{E}(\rho)(\q').
\end{equation}
For every $(q_1,q_1')\in Q'(\q_1)$, their concatenation $q_1''$ satisfies $\rho(q_1''\times \q_2'')=\rho(q_1\times \q_2)+\rho(q_1'\times \q_2')$. Averaging this over $Q'(\q_1)$ (using Lemma \ref{lem:coprime-hom}) we have
\[
\mb{E}_{Q'(\q_1)}\rho(q_1''\times \q_2'')=\mb{E}_{Q'(\q_1)}\rho(q_1\times \q_2)+\mb{E}_{Q'(\q_1)}\rho(q_1'\times \q_2').
\]
Note that the left side here is $\mc{E}'(\rho)(\q'') $ because we can apply\footnote{Indeed, letting $F_1=\{v\in\db{k+2}:v\sbr{2}=0\}$, $F_2:=\{v\in\db{k+2}:v\sbr{1}=0\}$, $P:=F_1\cup F_2$ and $v=(1,0^{k+1})$, note that  $P\setminus(F_1\cap F_2)$ has the extension property in $P$, so $\{v\}$, $P\setminus(F_1\cap F_2)$ form a good pair.}  Corollary \ref{cor:mes-presev-with-torsion} to the map $Q'(\q_1)\to \cu_{\q_1(0^{k+1})}^{k+1}(\nss_1)$,  $(q_1,q_1')\mapsto q_1''$. Similarly, the first summand on the right side above is $\mc{E}'(\rho)(\q)$ because we can also apply\footnote{Using the same definitions as in the previous footnote, we have to prove that $\{v\}$, $F_1$ form a good pair. This follows by the extension property of $F_1$.} Corollary \ref{cor:mes-presev-with-torsion} to the map $Q'(\q_1)\to \cu_{\q_1(0^{k+1})}^{k+1}(\nss_1)$,  $(q_1,q_1')\mapsto q_1$. Finally, the second summand on the right side is $\mc{E}(\rho)(\q')$ again by Corollary \ref{cor:mes-presev-with-torsion} applied\footnote{For this last part we need a slightly different argument. Here we need to prove that $\{v\}$, $F_2$ form a good pair. Note that $F_2\cup \{v\}$ is a simplicial cubespace as per \cite[Definition 3.1.4]{Cand:Notes1}, so the result follows by \cite[Lemma 3.1.5]{Cand:Notes1}.}  to $Q'(\q_1)\to \cu^{k+1}(\nss_1)$,  $(q_1,q_1')\mapsto q_1'$.
\end{proof}
\begin{proof}[Proof of Proposition \ref{prop:cocycledecomp}]
We set $\kappa=\mc{E}(\rho)$, which by Lemma \ref{lem:kappa} is a cocycle that factors through $\pi_2^{\db{k+1}}$. Thus our main task is to prove that $\rho-\mc{E}(\rho)$ is a coboundary, i.e., that there is a function $g:\nss \to \ab$ such that the following holds for every cube $\q\in\cu^{k+1}(\nss)$:
\begin{equation}\label{eq:cobfact}
\rho(\q)-\mc{E}(\rho)(\q)=\sigma_{k+1}(g\co\q).
\end{equation}
We can describe the function $g$ explicitly: for each $y \in \nss$ we let
\[
g(y):= \mc{E}'(\rho)(\q)-\mc{E}(\rho)(\q)\;\; \textrm{for any $\q\in \cu^{k+1}_y(\nss)$.}
\]
Let us prove that $g$ is thus a well-defined map. Equivalently, it suffices to show that the function 
$\wt{g}:\cu^{k+1}(\nss)\to\ab,\; \q\mapsto \mc{E}'(\rho)(\q)-\mc{E}(\rho)(\q)$ depends only on the value  $\q(0^{k+1})$, i.e., \begin{equation}\label{eq:rootdep}
\textrm{if $\q,\q'\in\cu^{k+1}(\nss)$ satisfy $\q(0^{k+1})=\q'(0^{k+1})$ then $\wt{g}(\q)=\wt{g}(\q')$.}    
\end{equation}
To see this, fix any $\q,\q'\in \cu^{k+1}(\nss)$ such that $\q(0^{k+1})=\q'(0^{k+1})$. The results in \cite[\S 2.2.3]{Cand:Notes2} imply that the map sending each tricube morphism $t\in \hom(T_{k+1},\nss)$ to the pair of cubes\footnote{Here $\omega_{k+1}$ is the outer-point map, see \cite[Definition 3.1.15]{Cand:Notes1}.} $(t\co\Psi_{0^{k+1}},t\co\omega_{k+1})$ is Haar-measure preserving, whence surjective (by finiteness of $\nss$), so there exists  $t\in \hom(T_{k+1},\nss)$ such that $t\co\Psi_{0^{k+1}}=\q$ and such that the outer cube $t\co \omega_{k+1}$ equals $\q'$. Starting from $t\co\Psi_{0^{k+1}}=\q$, using a straightforward sequence of concatenations along internal $k$-faces of $t$ we deduce using Lemma \ref{lem:fact1} that $\wt{g}(\q)=\wt{g}(t\co \omega_{k+1})=\wt{g}(\q')$. This proves \eqref{eq:rootdep}.

To complete the proof we now show that the function $g$ satisfies \eqref{eq:cobfact}. For this we use an argument consisting in averaging cocycles over tricubes, inspired from similar arguments used in previous work (e.g.\ in \cite[\S 4]{CSinverse}). For any fixed cube $\q=\q_1\times\q_2\in\cu^{k+1}(\nss)$, note that for any tricube  $t_i$ on $\nss_i$ with outer-cube $\q_i$ ($i=1,2$) we have $\rho(\q)=\sum_{v\in \db{k+1}}(-1)^{|v|}\rho((t_1\times t_2)\co\Psi_v)$. Recall that by \cite[Lemma 3.1.17]{Cand:Notes1} the set of tricubes $T_{k+1}$ can be identified with a simplicial set inside $\db{2(k+1)}$ via an injective morphism $q:T_{k+1}\to \db{2(k+1)}$. Letting $\mc{T}_{\q_1}$ denote the set of all tricubes on $\nss_1$ with outer-cube $\q_1$, note that $|\mc{T}_{\q_1}|$ and $|\ab|$ are coprime by Lemma \ref{lem:coprime-hom} (using that the outer point set of a tricube has the extension property by \cite[Lemma 2.2.21]{Cand:Notes2}) and we therefore have
\begin{eqnarray*}
\rho(\q)& =& \mb{E}_{t_1\in \mc{T}_{\q_1}}\sum_{v\in \db{k+1}}(-1)^{|v|}\rho\big((t_1\times t_2)\co\Psi_v\big) = \sum_{v\in \db{k+1}}(-1)^{|v|} \mb{E}_{t_1\in \mc{T}_{\q_1}}\rho((t_1\co\Psi_v)\times (t_2\co\Psi_v))\\
& =&\sum_{v\in \db{k+1}}(-1)^{|v|} \mb{E}_{\q_1'\in \cu^{k+1}_{\q_1(v)}(\nss_1)}\;\rho(\q_1' \times (t_2\co\Psi_v)),
\end{eqnarray*}
where the last equality follows from the fact that for each fixed $v$ we can apply Corollary \ref{cor:mes-presev-with-torsion} to the map $t\mapsto t\co\Psi_v$ (using again \cite[Lemma 2.2.21]{Cand:Notes2}). On the other hand, since $\mc{E}(\rho)$ is a cocycle, we have $\mc{E}(\rho)(\q) = \sum_{v\in \db{k+1}}(-1)^{|v|}\mc{E}(\rho)\big((t_1\times t_2)\co\Psi_v\big)$, and then averaging again both sides over $\mc{T}_{\q_1}$, we obtain $\mc{E}(\rho)(\q) = \sum_{v\in \db{k+1}}(-1)^{|v|}\mb{E}_{\q_1'\in \cu^{k+1}_{\q_1(v)}(\nss_1)}\mc{E}(\rho)\big(\q_1'\times (t_2\co\Psi_v)\big)$. Subtracting the last two equalities we confirm \eqref{eq:cobfact} as follows:
\begin{eqnarray*}
\rho(\q)-\mc{E}(\rho)(\q) & = & \sum_{v\in \db{k+1}}(-1)^{|v|} \; \mb{E}_{\q_1'\in \cu^{k+1}_{\q_1(v)}(\nss_1)}\;\Big(\rho(\q_1' \times (t_2\co\Psi_v)) - \mc{E}(\rho)(\q_1'\times (t_2\co\Psi_v))\Big)\\
& = & \sum_{v\in \db{k+1}}(-1)^{|v|} \; g(\q_1(v),\q_2(v)) = \sigma_{k+1}(g\co\q).
\end{eqnarray*}
This completes the proof.
\end{proof}
\noindent We have thus completed the proof of Theorem \ref{thm:p-sep}. Before we start deriving its consequences, let us prove the uniqueness of the primary decomposition, claimed in Theorem \ref{thm:p-sylow-intro}. This relies on the following result.

\begin{theorem}\label{thm:unique-p-sylow}
Let $\Lambda$ be a finite set of prime numbers, for each $p\in\Lambda$ let $\ns_p,\nss_p$ be $p$-nilspaces, and suppose that $\varphi: \prod_{p \in \Lambda}\ns_p \to \prod_{p \in \Lambda}\nss_p $ is a nilspace isomorphism. Then there exist nilspace isomorphisms $\phi_p:\ns_p\to \nss_p$ such that $\varphi(x)=(\phi_p(x_p))_{p\in \Lambda}$ for every $x=(x_p)_{p\in \Lambda}\in \prod_{p \in \Lambda}\ns_p$.
\end{theorem}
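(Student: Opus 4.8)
The plan is to derive the statement from the following key lemma, which seems to be of independent interest: \emph{if $\ns,\nss$ are finite nilspaces with $\gcd(|\ns|,|\nss|)=1$, then every nilspace morphism $\ns\to\nss$ is constant}. Granting this, Theorem \ref{thm:unique-p-sylow} follows quickly. Since each $\ns_p$ (resp.\ $\nss_p$) is a $p$-nilspace, $|\ns_p|$ and $|\nss_p|$ are powers of $p$; as $\varphi$ is a bijection we get $\prod_{p}|\ns_p|=\prod_{p}|\nss_p|$, so by unique factorisation $|\ns_p|=|\nss_p|$ for all $p\in\Lambda$. Fix $p\in\Lambda$ and write $\ns=\ns_p\times H_p$ with $H_p:=\prod_{q\in\Lambda\setminus\{p\}}\ns_q$, a finite nilspace of order coprime to $p$, hence to $|\nss_p|$. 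For each fixed $a\in\ns_p$ the map $h\mapsto(a,h)$ is a morphism $H_p\to\ns$ (a product of the identity and a constant map), so composing with $\varphi$ and with the coordinate projection $\nss\to\nss_p$ yields a morphism $H_p\to\nss_p$ between finite nilspaces of coprime order, which is constant by the key lemma. Thus the $\nss_p$-coordinate of $\varphi(a,h)$ depends only on $a$; carrying this out for every $p$ shows that $\varphi(x)=\big(\phi_p(x_p)\big)_{p\in\Lambda}$ for maps $\phi_p\colon\ns_p\to\nss_p$, each of which is a composition of morphisms and hence a morphism. Applying the same argument to $\varphi^{-1}$ gives $\varphi^{-1}(y)=\big(\psi_p(y_p)\big)_{p}$ with $\psi_p$ morphisms, and then $\varphi^{-1}\co\varphi=\id$ and $\varphi\co\varphi^{-1}=\id$ force $\psi_p\co\phi_p=\id_{\ns_p}$ and $\phi_p\co\psi_p=\id_{\nss_p}$, so each $\phi_p$ is a nilspace isomorphism, as required.

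I would prove the key lemma by induction on the step $k$ of the target $\nss$; the case $k=0$ (a point) is trivial, so let $k\ge 1$ and let $f\colon\ns\to\nss$ be a morphism. Composing $f$ with the canonical factor map $\pi_{k-1}\colon\nss\to\nss_{(k-1)}$ gives a morphism $\ns\to\nss_{(k-1)}$; since $|\nss_{(k-1)}|$ divides $|\nss|$ and $\nss_{(k-1)}$ has step $<k$, the induction hypothesis shows this composite is constant, equal to some $\bar c$, so $f$ maps $\ns$ into the fibre $\pi_{k-1}^{-1}(\bar c)$. By the structure theory of nilspaces (see \cite{Cand:Notes1}), $\nss$ is a degree-$k$ extension of $\nss_{(k-1)}$ by $\ab_k(\nss)$ associated with a cocycle on $\cu^{k+1}(\nss_{(k-1)})$, and since (by concatenating the constant cube with itself) that cocycle vanishes on the constant cube at $\bar c$, the fibre $\pi_{k-1}^{-1}(\bar c)$, with the cube structure induced from $\nss$, is an affine copy of $\mc{D}_k(\ab_k(\nss))$. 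Writing $Z:=\ab_k(\nss)$, fixing $x_0\in\ns$ and setting $y_0:=f(x_0)$, the map $g$ sending $x$ to the difference $f(x)-y_0$ computed in this fibre is then a morphism $\ns\to\mc{D}_k(Z)$, i.e.\ a polynomial map of degree at most $k$ from the finite nilspace $\ns$ to the finite abelian group $Z$; moreover $|Z|$ divides $|\nss|$, so $\gcd(|\ns|,|Z|)=1$. Thus the key lemma reduces to the sub-lemma that $g$ is constant.

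For the sub-lemma — \emph{a degree-$\le k$ polynomial map $g\colon\ns\to Z$ from a finite nilspace $\ns$ to a finite abelian group $Z$ with $\gcd(|\ns|,|Z|)=1$ is constant} — I would induct on the step $k'$ of $\ns$ (the case $k'=0$ being trivial). For $k'\ge 1$, restrict $g$ to a fibre $F$ of the factor map $\ns\to\ns_{(k'-1)}$; exactly as above, $F$ with its induced cube structure is an affine copy of $\mc{D}_{k'}(A)$ for $A:=\ab_{k'}(\ns)$, and since the cube sets of $\mc{D}_1(A)$ are contained in those of $\mc{D}_{k'}(A)$, the restriction $g|_F$ is, up to an affine shift, an ordinary polynomial map $A\to Z$ of degree $\le k$ with $\gcd(|A|,|Z|)=1$. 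Such a map is constant: by restricting to cosets this reduces, by induction on $|A|$, to the cyclic case $A=\mb{Z}_d$, where one inducts on the degree — $\partial_h g$ (for $h=1\in\mb{Z}_d$) has degree $<k$, hence equals a constant $c$ by induction, so $g(x+1)=g(x)+c$ on $\mb{Z}_d$ and therefore $dc=g(d)-g(0)=0$ in $Z$; since $\gcd(d,|Z|)=1$ this forces $c=0$, so $g$ is constant. Hence $g$ is constant on every fibre of $\ns\to\ns_{(k'-1)}$, so it descends to a degree-$\le k$ polynomial map $\ns_{(k'-1)}\to Z$, which is constant by the induction hypothesis on the step. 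This completes the sub-lemma, the key lemma, and the theorem; combined with Theorem \ref{thm:p-sep} it yields the uniqueness clause of Theorem \ref{thm:p-sylow-intro}, since any two primary decompositions of a finite nilspace are related by an isomorphism of the corresponding products.

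I expect the one genuinely delicate point to be the passage, used both in the key lemma and in the sub-lemma, from a morphism landing in a single fibre of a factor map to a polynomial map into an abelian group: this rests on identifying the fibre of $\pi_{k-1}$, with its induced cube structure, with an affine $\mc{D}_k$-nilspace over the top structure group — equivalently, on the vanishing of the structuring cocycle on degenerate base cubes. Once this identification is in place, the two layered inductions and the elementary cyclic-group computation are routine.
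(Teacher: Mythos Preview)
Your proposal is correct and, at the top level, matches the paper exactly: the paper isolates the same key lemma (Lemma \ref{lem:coprime-implies-cte-2}) that morphisms between finite nilspaces of coprime orders are constant, reduces it by the same induction on the step of the target to the sub-lemma that $\hom(\ns,\mc{D}_k(Z))$ consists of constants when $\gcd(|\ns|,|Z|)=1$ (Lemma \ref{lem_coprime-implies-cte-1}), and derives the theorem from the key lemma just as you do (this derivation is recorded separately as Proposition \ref{prop:pmorphismsep}). The one genuine difference is in how the sub-lemma is proved. The paper invokes Corollary \ref{cor:fin-ns-quot-modN} --- which rests on the free-nilspace representation theorem of \cite{CGSS-doublecoset} --- to replace $\ns$ by a product $\prod_i\mc{D}_i(\mb{Z}_{m^\gamma}^{a_i})$, and then freezes all but one coordinate and appeals to \cite[Lemma 6.6]{CSinverse} and \cite[Lemma A.2]{CGSS-doublecoset}. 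You instead induct directly on the step of $\ns$, peeling off the top structure group and reducing to the elementary cyclic computation. Your route is more self-contained (no external structure theorems needed), while the paper's is shorter once that machinery is on the table. The ``delicate point'' you flag --- that a fibre of $\pi_{k-1}$ with its induced cubes is affinely $\mc{D}_k(\ab_k)$ --- is standard, and your concatenation argument that the cocycle vanishes on constant cubes is exactly the right justification.
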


We split the proof into the following lemmas.

\begin{lemma}\label{lem_coprime-implies-cte-1}
Let $\ns$ be a $k$-step finite nilspace, let $\ab$ be a finite abelian group such that $|\ns|$, $|\ab|$ are coprime, and let $\ell\in \mb{N}$. Then  $\hom(\ns,\mc{D}_\ell(\ab))$ is the set of constant functions $\ns\to\ab$.
\end{lemma}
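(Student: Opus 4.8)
The plan is to prove the following slightly more flexible statement, for all integers $\ell\geq 0$ (the case $\ell=0$ being needed as a base case): if $\ns$ is a finite $k$-step nilspace with $\gcd(|\ns|,|\ab|)=1$ then every morphism $f:\ns\to\mc{D}_\ell(\ab)$ is constant. I would argue by induction on the pair $(k,\ell)$ ordered lexicographically (with $k$ primary). There are two base cases. If $k=0$ then $\ns$ is a point and the claim is trivial. If $\ell=0$, then for any $k$ a morphism $f$ must send the cube set $\cu^1(\ns)=\ns\times\ns$ into $\cu^1(\mc{D}_0(\ab))$, which is the diagonal of $\ab\times\ab$; hence $f(x)=f(y)$ for all $x,y\in\ns$, so $f$ is constant.

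For the inductive step assume $k\geq 1$ and $\ell\geq 1$. Let $Z:=\ab_k(\ns)$ be the top structure group, let $\pi:\ns\to\ns'$ be the canonical projection onto the $(k-1)$-step factor, so that $Z$ acts freely on $\ns$ by translations $\alpha_z$ with the $Z$-orbits being exactly the fibres of $\pi$, and note $|Z|\mid|\ns|$. The first key point is a \emph{degree-lowering} property: for each $z\in Z$ the map $\Delta_z f:=f\co\alpha_z-f:\ns\to\ab$ lies in $\hom(\ns,\mc{D}_{\ell-1}(\ab))$. To see this, fix $\q\in\cu^n(\ns)$; since $\alpha_z$ is a translation, the $\alpha_z$-prism $\q^+\in\cu^{n+1}(\ns)$ defined by $\q^+(\omega,0)=\q(\omega)$ and $\q^+(\omega,1)=\alpha_z(\q(\omega))$ is a cube, so $F:=f\co\q^+$ is a polynomial map $\db{n+1}\to\ab$ of degree at most $\ell$. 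Then $\Delta_z f\co\q$ is the restriction to $\db{n}\times\{0\}$ of the discrete derivative $\parc_{e_{n+1}}F$, which has degree at most $\ell-1$, so $\Delta_z f\co\q\in\cu^n(\mc{D}_{\ell-1}(\ab))$; as $\q$ was arbitrary, $\Delta_z f\in\hom(\ns,\mc{D}_{\ell-1}(\ab))$. By the inductive hypothesis at $(k,\ell-1)$, each $\Delta_z f$ is constant, say $\Delta_z f\equiv c(z)\in\ab$. Using $\alpha_{z+z'}=\alpha_z\co\alpha_{z'}$ and the fact that $f\co\alpha_z=f+c(z)$, a short computation gives $c(z+z')=c(z)+c(z')$, so $c:Z\to\ab$ is a homomorphism; since $\gcd(|Z|,|\ab|)=1$ we get $c\equiv 0$, i.e.\ $f$ is $Z$-invariant.

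Being constant on the fibres of $\pi$, the map $f$ descends to $\bar f:\ns'\to\ab$ with $f=\bar f\co\pi$. Because $\pi$ is a fibration, $\bar f$ is a morphism $\ns'\to\mc{D}_\ell(\ab)$: any $\q'\in\cu^n(\ns')$ lifts to some $\q\in\cu^n(\ns)$ with $\pi\co\q=\q'$, whence $\bar f\co\q'=f\co\q\in\cu^n(\mc{D}_\ell(\ab))$. Now $\ns'$ is a finite $(k-1)$-step nilspace with $|\ns'|\mid|\ns|$, so the inductive hypothesis at $(k-1,\ell)$ shows $\bar f$, and hence $f$, is constant.

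The only step requiring genuine care is the prism property invoked above, namely that the structure-group translations $\alpha_z$ extend every $n$-cube to an $(n+1)$-cube; this, together with the facts that the canonical projection $\pi$ is a fibration and that $\ns$ is a free $Z$-bundle over $\ns'$, will be taken from the basic theory of structure groups and abelian bundles in \cite{Cand:Notes1}. Everything else is routine bookkeeping with discrete derivatives of polynomial maps on $\db{n}$ together with the vanishing of $\hom(Z,\ab)$ for groups of coprime order.
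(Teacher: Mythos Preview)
Your proof is correct, and it takes a genuinely different route from the paper's argument.

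The paper first invokes Corollary~\ref{cor:fin-ns-quot-modN} (itself resting on a structure theorem from \cite{CGSS-doublecoset}) to replace $\ns$ by a fibration image of a concrete product $\prod_{i=1}^k \mc{D}_i(\mb{Z}_{m^\gamma}^{a_i})$, and then freezes all but one coordinate at a time, using \cite[Lemma~A.2]{CGSS-doublecoset} to rescale the degree and \cite[Lemma~6.6]{CSinverse} to conclude constancy in that coordinate. By contrast, your double induction on $(k,\ell)$ uses only the intrinsic bundle description of a $k$-step nilspace: the action of $\ab_k(\ns)$, the prism property for that action (which, as you note, follows because the difference map $(v,\epsilon)\mapsto\epsilon z$ lies in $\cu^{n+1}(\mc{D}_k(\ab_k))$ whenever $k\ge 1$), and the fibration $\pi:\ns\to\ns_{k-1}$. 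The degree-lowering step for $\Delta_z f$ and the vanishing of $\hom(Z,\ab)$ then finish the job. This is more elementary and entirely self-contained: it avoids both the forward reference to Corollary~\ref{cor:fin-ns-quot-modN} and the external lemmas, at the cost of re-deriving a special case of those cited facts inside the induction. The paper's route, on the other hand, shows how the lemma fits into the broader coordinate description of finite nilspaces.
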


\begin{proof}
Clearly there exists some integer $m$ coprime with $|\ab|$ such that all structure groups of $\ns$ have torsion $m$. Let $\varphi\in \hom(\ns,\mc{D}_\ell(\ab))$. By Corollary \ref{cor:fin-ns-quot-modN} we can assume that $\ns=\prod_{i=1}^k \mc{D}_i(\mb{Z}_{m^\gamma}^{a_i})$ for some $\gamma\in \mb{N}$ and $a_i\ge 0$ for $i\in[k]$. Note that for any fixed $(x_2,\ldots,x_k)\in \prod_{i=2}^k \mc{D}_i(\mb{Z}_{m^\gamma}^{a_i})$, the map $\mc{D}_1(\mb{Z}_{m^\gamma}^{a_1})\to \mc{D}_\ell(\ab)$,  $x\mapsto \varphi(x,x_2,\ldots,x_k)$ is a morphism. By \cite[Lemma 6.6]{CSinverse} this map is constant. Hence $\varphi$ depends only on the last $k-1$ components, i.e., for any fixed $\tilde{x_1}\in \mc{D}_1(\mb{Z}_{m^\gamma})^{a_1}$, for every $(x_1,x_2,\ldots,x_k)\in \prod_{i=1}^k \mc{D}_i(\mb{Z}_{m^\gamma})^{a_i}$ we have $\varphi(x_1,\ldots,x_k)=\varphi(\tilde{x_1},x_2,\ldots,x_k)$. Now we can repeat the same argument, fixing the last $k-2$ coordinates we define a map $\mc{D}_2(\mb{Z}_{m^\gamma}^{a_2})\to \mc{D}_\ell(\ab)$ as $x\mapsto \varphi(\tilde{x_1},x,x_3,\ldots,x_k)$. By \cite[Lemma A.2]{CGSS-doublecoset} this map can be viewed as a morphism $\mc{D}_1(\mb{Z}_{m^\gamma}^{a_2})\to \mc{D}_{\lfloor\ell/2\rfloor}(\ab)$ and hence, again by \cite[Lemma 6.6]{CSinverse}, this map is constant. Thus, for any fixed $\tilde{x_2}\in \mc{D}_2(\mb{Z}_{m^\gamma})^{a_2}$ we have $\varphi(\tilde{x_1},\tilde{x_2},x_3,\ldots,x_k)=\varphi(\tilde{x_1},x_2,\ldots,x_k)=\varphi(x_1,x_2,\ldots,x_k)$ for all $(x_1,x_2,\ldots,x_k)\in \prod_{i=1}^k \mc{D}_i(\mb{Z}_{m^\gamma})^{a_i}$. Therefore $\varphi$ does not depend on the first 2 coordinates. Continuing this way we  eventually conclude that $\varphi$ is constant.
\end{proof}

\begin{lemma}\label{lem:coprime-implies-cte-2}
Let $\ns,\nss$ be finite nilspaces such that $|\ns|$ and $|\nss|$ are coprime. Then $\hom(\ns,\nss)$ is the set of constant functions $\ns\to\nss$.
\end{lemma}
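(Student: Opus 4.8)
The plan is to argue by induction on the step $k$ of the target nilspace $\nss$. When $k=0$ the nilspace $\nss$ is a single point and the claim is trivial; when $k=1$, $\nss$ is (an affine version of) a finite abelian group $\ab_1(\nss)$ with $|\ab_1(\nss)|=|\nss|$ coprime to $|\ns|$, so the claim follows directly from Lemma \ref{lem_coprime-implies-cte-1} applied with $\ell=1$. Assume now that $k\geq 2$ and that the lemma holds whenever the target has step at most $k-1$.

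Let $\phi\in\hom(\ns,\nss)$. Consider the canonical projection $\pi:\nss\to\nss_{k-1}$ onto the $(k-1)$-step factor of $\nss$ (see \cite[Theorem 3.2.19]{Cand:Notes1} and \cite[\S 3.3]{Cand:Notes1}). Since $|\nss_{k-1}|$ divides $|\nss|$, it is coprime to $|\ns|$, so by the induction hypothesis the morphism $\pi\co\phi:\ns\to\nss_{k-1}$ is constant, equal to some $y'\in\nss_{k-1}$. Hence $\phi$ takes values in the fiber $F:=\pi^{-1}(y')$. Next I recall that $\nss$ is a degree-$k$ extension of $\nss_{k-1}$ by its $k$-th structure group $\ab_k(\nss)$ (see \cite[Definition 3.3.13]{Cand:Notes1}), and that the fiber $F$, equipped with the cube sets $\cu^n(F)=\cu^n(\nss)\cap F^{\db{n}}$, is a sub-nilspace of $\nss$ which, after fixing a base point, is isomorphic to $\mc{D}_k(\ab_k(\nss))$. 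Since $\phi:\ns\to\nss$ is a morphism with image in $F$, for every $\q\in\cu^n(\ns)$ we have $\phi\co\q\in\cu^n(\nss)\cap F^{\db{n}}=\cu^n(F)$, so $\phi$ is in fact a morphism $\ns\to F\cong\mc{D}_k(\ab_k(\nss))$. Finally $|\ab_k(\nss)|$ divides $|\nss|$ and is therefore coprime to $|\ns|$, so Lemma \ref{lem_coprime-implies-cte-1} (with $\ell=k$) shows that $\phi$ is constant. This closes the induction.

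The only slightly delicate point is the structural fact used above: that the fiber of the top canonical projection, with the induced cube structure, is a sub-nilspace isomorphic to $\mc{D}_k(\ab_k(\nss))$. This is standard in the theory of $k$-fold abelian bundles and degree-$k$ extensions (\cite[\S 3.3]{Cand:Notes1}); everything else in the argument is bookkeeping with cardinalities and an appeal to Lemma \ref{lem_coprime-implies-cte-1}.
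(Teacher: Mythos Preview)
Your argument is correct and follows essentially the same route as the paper's proof: induction on the step of the target nilspace $\nss$, using the induction hypothesis to show that $\pi_{k-1}\co\phi$ is constant, then identifying the relevant fiber with $\mc{D}_k(\ab_k(\nss))$ and invoking Lemma \ref{lem_coprime-implies-cte-1}. The paper's version is terser (it does not spell out the fiber identification as carefully as you do), but the logical structure is identical.
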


\begin{proof}
Suppose that $\ns$ is $k$-step and $\nss$ is $\ell$-step. We argue by induction on $\ell$. For $\ell=1$ the result follows from Lemma \ref{lem_coprime-implies-cte-1}. For $\ell>1$, given any $\varphi\in\hom(\ns,\nss)$, note that $\pi_{\ell-1}\co \varphi \in \hom(\ns,\nss_{\ell-1})$, so by induction this map is constant. Thus $\varphi$ takes values in a single fiber of $\pi_{\ell-1}$, so we can view $\varphi$ as a morphism $\ns\to \mc{D}_k(\ab_\ell(\nss))$, which is constant by Lemma \ref{lem_coprime-implies-cte-1}.
\end{proof}

\begin{remark}
Note that Lemma \ref{lem:coprime-implies-cte-2} generalizes both \cite[Lemma 6.7]{CSinverse} and \cite[Lemma 8.1]{JST-tot-dis}.
\end{remark}

\noindent Theorem \ref{thm:unique-p-sylow} now follows from a simple application of Lemma \ref{lem:coprime-implies-cte-2}. In fact we use a similarly simple application to give a more general result in the next subsection; see Proposition \ref{prop:pmorphismsep}.

\begin{remark}\label{rem:JSTimplication}
We did not find a simple deduction of Theorem \ref{thm:p-sylow-intro} from \cite[Theorem 2.3]{JST-tot-dis}. A natural attempt could consist in using the action of the translation group $\tran(\ns)$ on the given finite nilspace $\ns$. However, this is in general not an \emph{abelian} group $\Gamma$ as in \cite[Theorem 2.3]{JST-tot-dis}. There are also issues involving the possible  lack of ergodicity (or transitivity, in this finite setting) of this action, which are not easily dealt with, even invoking the ergodic decomposition.
\end{remark}

\vspace{1cm}

\subsection{Some consequences of the primary decomposition of finite nilspaces}\label{Subsec:PrimDecompConseq}\hfill\smallskip\\
We begin with Theorem \ref{thm:invreduc-intro}, of which we shall in fact prove the following refinement.
\begin{theorem}\label{thm:invreduc}
Let $k,m\in \mb{N}$ and $\delta > 0$. Let $\ab$ be a finite $m$-torsion abelian group with primary decomposition $\ab=\bigoplus_{p\in \mc{P}} \ab_p$, for $\mc{P}=\mc{P}(|\ab|)$, and $f: \ab\to \mb{C}$ be 1-bounded. Suppose that there is an $m$-torsion nilspace polynomial $F\co\phi$ of degree $k$ and complexity $M$ such that $|\mb{E}_{z\in \ab} f(z) F\co \phi(z) |\geq \delta$. Then for some $\varepsilon=\varepsilon(\delta,M)>0$, for each $p\in\mc{P}$ there exists a $p$-nilspace polynomial $F_p\co \phi_p:\ab_p\to\mb{C}$ of complexity at most $M'(M)$ such that $|\mb{E}_{z\in \ab} f(z)  \prod_{p\in \mc{P}} F_p\co \phi_p(z_p) |\geq \varepsilon$. Moreover, for any $b>0$ there exists $b'=b'(M,b)>0$ such that if $\phi$ is $b'$-balanced, then for every $p\in\mc{P}$ the morphism $\phi_p:\mc{D}_1(\ab_p)\to \ns_p$ is $b$-balanced.
\end{theorem}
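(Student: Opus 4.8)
The plan is to start from the given $m$-torsion nilspace polynomial $F\co\phi$ on $\ab=\bigoplus_{p\in\mc{P}}\ab_p$, decompose the target nilspace $\ns$ via Theorem \ref{thm:p-sylow-intro}, and then argue that the morphism $\phi$ essentially splits along this decomposition. Concretely, since $F\co\phi$ has complexity $M$, the $k$-step nilspace $\ns$ is finite with $|\ns|=O_M(1)$; its structure groups are $m$-torsion, so every prime dividing $|\ns|$ lies in $\mc{P}$. By Theorem \ref{thm:p-sylow-intro} we obtain an isomorphism $\ns\cong\prod_{q\in\mc{Q}}\ns_q$ with $\mc{Q}=\mc{P}(|\ns|)\subseteq\mc{P}$ and each $\ns_q$ a $q$-nilspace; enlarge the product to range over all of $\mc{P}$ by appending trivial (one-point) nilspaces $\ns_p$ for $p\in\mc{P}\setminus\mc{Q}$, which changes nothing. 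Writing $\phi=(\phi_q)_{q\in\mc{Q}}$ under this identification, where $\phi_q:\mc{D}_1(\ab)\to\ns_q$ is a morphism, the first key step is to show that $\phi_q$ factors through the projection $\ab\to\ab_q$, i.e.\ $\phi_q$ depends only on the $\ab_q$-coordinate. This should follow from the same coprimality principle used throughout Section \ref{sec:primdecomp}: for a prime $q\ne p$, the group $\ab_p$ has order coprime to $|\ns_q|$, and a morphism $\mc{D}_1(\ab_p)\to\ns_q$ (obtained by restricting $\phi_q$ to a coset of $\ab_p$) must be constant — this is precisely the content of (the $\mc{D}_1$-version of) Lemma \ref{lem:coprime-implies-cte-2}, or can be extracted from \cite[Lemma 6.6]{CSinverse} as in the proof of Lemma \ref{lem_coprime-implies-cte-1}. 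Applying this coordinate by coordinate shows $\phi_q$ descends to a morphism $\mc{D}_1(\ab_q)\to\ns_q$, which we rename $\phi_q$.

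Granting this, set $F_q:=F\co\iota_q$ where... more carefully: we cannot simply push $F$ through, since $F$ is a single $1$-bounded function on $\prod_q\ns_q$, not a product. Instead I would use Fourier-type decomposition on the product nilspace. Concretely, the space of $1$-bounded functions on a finite nilspace $\prod_{q\in\mc{Q}}\ns_q$ is spanned, in an $L^1$-efficient way, by products $\prod_q G_q(x_q)$ of $1$-bounded functions $G_q:\ns_q\to\mb{C}$: one way to see this is to note $|\prod_q\ns_q|=O_M(1)$, so $F$ is a bounded linear combination of $O_M(1)$ indicator functions of points, each of which is literally a product of point-indicators on the factors. Hence $F=\sum_{j=1}^{L} c_j \prod_{q\in\mc{Q}} G_{q,j}$ with $L=O_M(1)$, $\sum_j|c_j|=O_M(1)$, and each $G_{q,j}$ $1$-bounded. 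Substituting into $\delta\le|\mb{E}_{z} f(z)\,F\co\phi(z)|=|\sum_j c_j\,\mb{E}_z f(z)\prod_q G_{q,j}\co\phi_q(z_q)|$ and using the triangle inequality and pigeonhole, there is an index $j_0$ with $|\mb{E}_z f(z)\prod_{q\in\mc{Q}} G_{q,j_0}\co\phi_q(z_q)|\ge \varepsilon:=\delta/(L\cdot\max_j|c_j|\cdot\#)$, which is $\ge\varepsilon(\delta,M)>0$. Take $F_q:=G_{q,j_0}$ (and $F_q$ an arbitrary $1$-bounded function on the trivial nilspace for $p\in\mc{P}\setminus\mc{Q}$, contributing a harmless factor of $1$). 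Each $F_q\co\phi_q$ is a $q$-nilspace polynomial on $\ab_q$; its complexity is at most that of $\ns_q$, which is $O_M(1)$ since $\ns_q$ is a retract/sub-object of $\ns$ of bounded cardinality and we may bound complexity by cardinality (or cite that $\ns_q$ appears in the primary decomposition of the complexity-$M$ nilspace $\ns$). This gives $M'(M)$.

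For the balance addendum: recall balance of $\phi:\mc{D}_1(\ab)\to\ns$ is an equidistribution statement comparing the pushforward of uniform measure on $\ab$ under $\phi$ to Haar measure on $\ns$. Under the isomorphism $\ns\cong\prod_q\ns_q$, Haar measure is the product of the Haar measures, and since $\ab_q$ are the primary components, the uniform measure on $\ab$ is the product of the uniform measures on the $\ab_q$. Thus if $\phi$ is $b'$-balanced then the pushforward measure $(\phi)_*\mu_\ab=\bigotimes_q (\phi_q)_*\mu_{\ab_q}$ is within $b'$ of $\bigotimes_q \mathrm{Haar}_{\ns_q}$ on all the relevant test cubes; a standard marginal/telescoping argument (integrating out all coordinates but one, each factor being a probability measure) shows each $(\phi_q)_*\mu_{\ab_q}$ is within $O_M(b')$ of $\mathrm{Haar}_{\ns_q}$, i.e.\ $\phi_q$ is $b$-balanced once $b'=b'(M,b)$ is chosen small enough. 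I expect the main obstacle to be this last point — carefully matching the notion of $b$-balance (which is defined via morphisms/cubes on the nilspace, not just measures on the base set) across the primary decomposition, and verifying that restricting to a single factor preserves the balance bound with only an $M$-dependent loss; the measure-theoretic heuristic is clear, but spelling it out in the cube formalism of \cite[Definition 5.1]{CSinverse} is where the real work lies. A secondary, more routine nuisance is confirming the complexity bound $M'(M)$ for the factor nilspaces $\ns_q$ directly from the primary decomposition rather than merely from their cardinality.
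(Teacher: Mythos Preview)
Your proposal is correct and follows essentially the same approach as the paper: decompose $\ns$ via the primary decomposition, split $\phi$ componentwise using the coprimality constancy lemma (the paper packages this as Proposition \ref{prop:pmorphismsep}, which is precisely the argument you sketch via Lemma \ref{lem:coprime-implies-cte-2}), write $F$ as a bounded sum of rank-1 products via point indicators, and pigeonhole. For the balance addendum you flagged as the main obstacle, the paper's resolution is cleaner than your telescoping heuristic: since $\ns\cong\prod_p\ns_p$, the projection $\pr_p:\ns\to\ns_p$ is a fibration, and hence induces a \emph{continuous} map $\mc{P}(\cu^n(\ns))\to\mc{P}(\cu^n(\ns_p))$ on spaces of probability measures on cube sets (citing the proof of \cite[Theorem 1.3]{CGSS-p-hom}); this directly transfers $b'$-balance of $\phi$ to $b$-balance of $\pr_p\co\phi=\phi_p$ with $b'$ depending only on $\ns$ (hence on $M$) and $b$, sidestepping any explicit telescoping in the cube formalism.
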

\noindent Note that Theorem \ref{thm:invreduc-intro} follows readily by combining Theorem \ref{thm:invreduc} with Theorem \ref{thm:nil-for-m-exp}. To prove Theorem \ref{thm:invreduc} we will use the following result, extending Lemma \ref{lem:coprime-implies-cte-2}.

\begin{proposition}\label{prop:pmorphismsep}
Let $\mc{P}$ be a finite set of primes, let $\ns,\nss$ be nilspaces of the form $\ns=\prod_{p\in \mc{P}}\ns_p$, $\nss=\prod_{p\in \mc{P}}\nss_p$, where $\ns_p$, $\nss_p$ are $k$-step finite $p$-nilspaces for each $p\in \mc{P}$. Let $\phi:\ns\to\nss$ be a morphism. Then for each $p\in \mc{P}$ there exists a morphism $\phi_p:\ns_p\to \nss_p$ such that for every $x=(x_p)_{p\in\mc{P}}$ we have $\varphi(x)=\big(\phi_p(x_p)\big)_{p\in \mc{P}}$.
\end{proposition}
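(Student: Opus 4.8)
The plan is to prove this by a ``separation of variables'' argument based on Lemma~\ref{lem:coprime-implies-cte-2}. Concretely, for each prime $q\in\mc{P}$ I will show that the composition $\pi_q\co\varphi:\ns\to\nss_q$, where $\pi_q$ denotes the projection onto the $q$-th factor, depends only on the $q$-th coordinate of its argument, and hence factors as $\phi_q\co\pi_q^{\ns}$ for a morphism $\phi_q:\ns_q\to\nss_q$; the conclusion then follows immediately since $\varphi(x)=\big(\pi_q\co\varphi(x)\big)_{q\in\mc{P}}$. To set this up I would first recall that in a finite product nilspace $\prod_{p\in\mc{P}}\ns_p$ one has $\cu^n\big(\prod_{p}\ns_p\big)=\prod_{p}\cu^n(\ns_p)$, so each coordinate projection $\pi_p$ is a morphism, and, for any fixed base points $c_r\in\ns_r$ ($r\neq p$), the slice inclusion $\iota_p:\ns_p\to\ns$ sending $y$ to the tuple with $p$-th entry $y$ and $r$-th entry $c_r$ is a morphism (its components with index $r\neq p$ are constant maps, which send every cube to a constant cube). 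I would also note that since $\ns_p$ is a $p$-nilspace, $|\ns_p|=\prod_{i=1}^k|\ab_i(\ns_p)|$ is a power of $p$, and likewise $|\nss_q|$ is a power of $q$, so $|\ns_p|$ and $|\nss_q|$ are coprime whenever $p\neq q$.

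The core of the argument then runs as follows. Fix $q\in\mc{P}$ and some $p\in\mc{P}\setminus\{q\}$, and fix all coordinates other than the $p$-th one. The resulting map $\ns_p\to\nss_q$, namely $\pi_q\co\varphi\co\iota_p$, is a composition of morphisms, hence a morphism between two finite nilspaces of coprime order, so it is constant by Lemma~\ref{lem:coprime-implies-cte-2}. Thus altering a single coordinate with index $\neq q$ does not change the value of $\pi_q\co\varphi$; since $\mc{P}$ is finite, iterating over all $p\in\mc{P}\setminus\{q\}$ shows that $\pi_q\co\varphi(x)$ depends only on $x_q$. I then define $\phi_q:\ns_q\to\nss_q$ by $\phi_q(y):=\pi_q\co\varphi(x)$ for any $x$ with $x_q=y$; this is well defined by the previous sentence, and it is a morphism because $\phi_q=\pi_q\co\varphi\co\iota_q$ for any slice inclusion $\iota_q$, a composition of morphisms. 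Finally $\varphi(x)=\big(\phi_q(x_q)\big)_{q\in\mc{P}}$ for all $x=(x_p)_{p\in\mc{P}}$, which is exactly the claim.

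I do not anticipate a genuine obstacle here, since once Lemma~\ref{lem:coprime-implies-cte-2} is available the result is essentially bookkeeping. The only points needing mild care are: verifying that the slices $\iota_p$ and the projections $\pi_q$ are morphisms and that cubes of a product nilspace are precisely tuples of cubes of the factors (all standard); and the remark that any two points of $\ns$ sharing the same $q$-coordinate can be connected by finitely many single-coordinate moves, which uses only the finiteness of $\mc{P}$.
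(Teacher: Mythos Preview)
Your proof is correct and follows essentially the same approach as the paper: both argue that $\pi_q\co\varphi$ depends only on the $q$-th coordinate by invoking Lemma~\ref{lem:coprime-implies-cte-2} to kill the dependence on the other coordinates, and then read off $\phi_q$. The only cosmetic difference is that the paper freezes the $q$-th coordinate and applies the lemma once to the whole complementary product $\prod_{p\neq q}\ns_p$ (whose order is coprime to $|\nss_q|$), whereas you freeze all but one coordinate and iterate over $p\in\mc{P}\setminus\{q\}$; both routes are equally valid.
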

\begin{proof}
For each prime $p\in\mc{P}$, for each $x_p\in\ab_p$, the map $x_p'\to \pi_p\co \phi(x_p,x_p')$ is a morphism $\ns_p'\to\ns_p$ (where $\ns_p'$ denotes the complement $\prod_{p'\neq p} \ns_{p'}$ and each $x\in \ab$ can be written $(x_p,x_p')$ with $x_p\in\ns_p$ and $x_p'\in\ns_p'$) and must therefore be constant, by Lemma \ref{lem:coprime-implies-cte-2}. Therefore there is a morphism $\phi_p:\ns_p \to\nss_p$ such that $\pi_p\co\phi(x)=\phi_p(x_p)$. We thus obtain that
$\phi(x)=\big(\pi_p(\phi(x))\big)_{p\in\mc{P}}=\big(\phi_p(x_p)\big)_{p\in\mc{P}}$, as claimed.
\end{proof}

\begin{proof}[Proof of Theorem \ref{thm:invreduc}]
Let $\ns$ be the $m$-torsion nilspace such that $\phi:\ab\to\ns$, $F:\ns\to\mb{C}$, and $|\langle f,F\co\phi\rangle|\geq\delta$. Let $X\cong\prod_{p\in\mc{P}} \ns_p$ be the decomposition given by Theorem \ref{thm:p-sep}, where $\mc{P}=\mc{P}(|\ns|)$. Applying Proposition \ref{prop:pmorphismsep}, we deduce that without loss of generality $\mc{P}$ is also the set of primes dividing $|\ab|$, so by the primary decomposition for finite abelian groups we have $\ab\cong\prod_{p\in\mc{P}} \ab_p$, and moreover for each $p\in \mc{P}$ there is a morphism $\phi_p:\ab_p\to \ns_p$ such that for every $x=(x_p)_{p\in\mc{P}}\in\ab$ we have $\phi(x)=(\phi_p(x_p))_{p\in\mc{P}}$. 

Now note that we can write $F$ as a sum of 1-bounded rank-1 functions $\prod_{p\in \mc{P}} F_{p,i}(x_p)$, $i\in [R]$ with $R=R(M)$. We can do this very crudely as follows: for $x=(x_p)_{p\in \mc{P}}\in \ns$ we simply write $F(x)=\sum_{(y_p)_{p\in \mc{P}}\in \ns} F((y_p)_{p\in \mc{P}})\prod_{(y_p)_{p\in \mc{P}}}1_{y_p}(x_p)$ where $1_{y_p}(x_p)=1$ if $x_p=y_p$ and $0$ otherwise. (Note that then $R=|\ns|$ which is bounded in terms of $M$.)

Given the above rank-1 decomposition of $F$, we now apply the pigeonhole principle to deduce that there is $i\in [R]$ such that 
$|\langle f,\prod_{p\in \mc{P}} F_{p,i}\co\phi_p\rangle| \geq \delta/R$, so we can let $\varepsilon=\delta/R$.

To deduce the second part of the theorem, note that since $\ns\cong \prod_{p\in \mc{P}} \ns_p$, we have that $\pr_p:\ns\to \ns_p$ is a fibration. In particular, for any $n\ge 0$ it induces a continuous map $\mc{P}(\cu^n(\ns))\to \mc{P}(\cu^n(\ns_p))$ (here $\mc{P}(\cu^n(\ns))$ and $\mc{P}(\cu^n(\ns_p))$ are the spaces of probability measures on $\cu^n(\ns)$ and $\cu^n(\ns_p)$ respectively) defined as $\mu\mapsto \mu\co \pr_p^{-1}$ (see the proof of \cite[Theorem 1.3]{CGSS-p-hom}). Thus, for each $p\in\mc{P}$ we can assume that $\pr_p\co \phi$ is $b$-balanced for $b'=b'(\ns)>0$ small enough.
\end{proof}

\begin{corollary}\label{cor:Q1.9reduc}
A positive answer to Question \ref{mainQ:p-case} implies a positive answer to Question \ref{Q:JST}.
\end{corollary}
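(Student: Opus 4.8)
The plan is to run the output of Theorem \ref{thm:invreduc-intro} (in the refined form of Theorem \ref{thm:invreduc}, fed by Theorem \ref{thm:nil-for-m-exp}) one prime at a time into a positive answer to Question \ref{mainQ:p-case}, and then to collapse the resulting finite abelian \emph{group} nilspaces by ordinary Fourier analysis on groups of bounded size, which turns the correlating function into a genuine phase polynomial of degree $k$.

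Concretely, fix $k,m\in\mb{N}$ and $\delta>0$, let $\ab$ be a finite $m$-torsion abelian group with primary decomposition $\ab=\bigoplus_{p\in\mc{P}}\ab_p$ (so $\mc{P}=\mc{P}(|\ab|)\subseteq\mc{P}(m)$), and let $f:\ab\to\mb{C}$ be $1$-bounded with $\|f\|_{U^{k+1}}>\delta$. First I would apply Theorem \ref{thm:nil-for-m-exp} and then Theorem \ref{thm:invreduc} to obtain, for each $p\in\mc{P}$, a $p$-nilspace polynomial $F_p\co\phi_p:\ab_p\to\mb{C}$, where $\ns_p$ is a $k$-step \emph{finite} $p$-nilspace of complexity at most some $M'=M'(k,m,\delta)$ and $\phi_p:\mc{D}_1(\ab_p)\to\ns_p$, such that $|\mb{E}_{z\in\ab}f(z)\prod_{p\in\mc{P}}F_p\co\phi_p(z_p)|\ge\varepsilon_0$ for some $\varepsilon_0=\varepsilon_0(k,m,\delta)>0$. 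Crucially, by choosing the auxiliary function $D$ in Theorem \ref{thm:nil-for-m-exp} to decay fast enough as a function of the complexity $r$ (namely so that the balance $D(r)$ of $\phi$ on $\ab$ is pushed, through the balance function $b'$ of Theorem \ref{thm:invreduc}, below the threshold that Question \ref{mainQ:p-case} requires of $\phi_p$ for any $p$-nilspace of complexity at most $M'(r)$), I can simultaneously arrange that every $\phi_p$ is as balanced as needed; this is exactly the compactness argument from the proof of Theorem \ref{thm:nil-for-m-exp}, legitimate because for each $r$ only finitely many nilspaces have complexity at most $r$ up to isomorphism, so all these thresholds are taken over a finite list.

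Next I would apply the assumed positive answer to Question \ref{mainQ:p-case} to each $\phi_p:\mc{D}_1(\ab_p)\to\ns_p$. This gives, for each $p\in\mc{P}$, a $k$-step finite abelian group nilspace $\nss_p$ (a finite abelian group carrying the Host--Kra cube structure of a degree-$k$ filtration) with $|\nss_p|=O_{|\ns_p|,k,p}(1)=O_{k,m,\delta}(1)$, a morphism $g_p:\mc{D}_1(\ab_p)\to\nss_p$, and a map $h_p:\nss_p\to\ns_p$ with $\phi_p=h_p\co g_p$. Writing $G_p:=F_p\co h_p:\nss_p\to\mb{C}$, which is $1$-bounded, we get $|\mb{E}_{z\in\ab}f(z)\prod_{p\in\mc{P}}G_p(g_p(z_p))|\ge\varepsilon_0$. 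Now expand each $G_p$ in the Fourier basis of the finite abelian group $\nss_p$, i.e.\ $G_p=\sum_{\chi\in\wh{\nss_p}}\wh{G_p}(\chi)\,e(\chi(\cdot))$ with $|\wh{G_p}(\chi)|\le 1$ for every character $\chi:\nss_p\to\mb{R}/\mb{Z}$. Multiplying out over $p\in\mc{P}$ produces at most $\prod_{p}|\nss_p|=O_{k,m,\delta}(1)$ terms, each with coefficient of modulus at most $1$, so by the pigeonhole principle there are characters $(\chi_p)_{p\in\mc{P}}$ with $|\mb{E}_{z\in\ab}f(z)\,e(\sum_{p\in\mc{P}}\chi_p(g_p(z_p)))|\ge\varepsilon$, where $\varepsilon:=\varepsilon_0/\prod_p|\nss_p|>0$ depends only on $k,m,\delta$.

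It then remains to set $P(z):=-\sum_{p\in\mc{P}}\chi_p(g_p(z_p))$ and verify that this is a polynomial $\ab\to\mb{R}/\mb{Z}$ of degree at most $k$, which gives $|\mb{E}_{z\in\ab}f(z)e(-P(z))|\ge\varepsilon$ and hence a positive answer to Question \ref{Q:JST}. This last point is a short computation with discrete derivatives: a morphism $g_p$ from $\mc{D}_1(\ab_p)$ into a group nilspace whose filtration has degree $k$ satisfies $\partial_{h_1}\cdots\partial_{h_{k+1}}g_p\equiv 0$; since the character $\chi_p$ is a group homomorphism it commutes with discrete derivatives, so $\chi_p\co g_p:\ab_p\to\mb{R}/\mb{Z}$ is polynomial of degree at most $k$; precomposing with the coordinate projection $\ab\to\ab_p$ (a homomorphism) preserves the degree; and a finite sum of such maps is again polynomial of degree at most $k$. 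The only genuinely non-routine step of the whole argument is the parameter-threading in the first paragraph above — ensuring the $\phi_p$ are balanced enough to invoke Question \ref{mainQ:p-case} while keeping all constants depending on $k,m,\delta$ only — and this is handled exactly as in the proof of Theorem \ref{thm:nil-for-m-exp}; everything afterwards is elementary Fourier analysis on abelian groups of size $O_{k,m,\delta}(1)$.
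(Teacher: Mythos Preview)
Your proof is correct and follows essentially the same route as the paper: apply Theorem \ref{thm:invreduc} (with enough balance via Theorem \ref{thm:nil-for-m-exp}) to split into $p$-nilspace polynomials, invoke Question \ref{mainQ:p-case} to factor each $\phi_p$ through a bounded finite abelian group nilspace $\nss_p$, Fourier-expand $F_p\co h_p$ on $\nss_p$, and pigeonhole. Your write-up is in fact more explicit than the paper's on two points the paper leaves implicit, namely the balance threading and the verification that $\chi_p\co g_p$ (and hence the final $P$) has degree at most $k$.
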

\begin{proof}
This follows by a straightforward adaptation of the proof of \cite[Lemma 6.5]{CGSS-p-hom}. More precisely, starting from the conclusion of Theorem \ref{thm:invreduc}, for each $p\in\mc{P}:=\mc{P}(|\ab|)$ we have a nilspace polynomial $F_p\co \phi_p$ such that $|\mb{E}_{z\in \ab} f(z)  \prod_{p\in \mc{P}} F_p\co \phi_p(z_p) |\geq \varepsilon>0$. Our goal is to replace the product here by a polynomial phase function. Note that we can assume that $\phi_p:\mc{D}_1(\ab_p)\to \ns_p$ is arbitrarily well-balanced for each $p$. Given the positive answer to Question \ref{mainQ:p-case}, for each $p\in \mc{P}$ there exists an abelian  $p$-group nilspace\footnote{Thus $\nss_p$ is an abelian $p$-group equipped with the Host--Kra cubes for a certain degree-$k$ filtration on this group.}  $\nss_p$, a morphism $g_p:\mc{D}_1(\ab_p)\to \nss_p$, and a map $h_p:\nss_p\to \ns_p$, such that $\phi_p=h_p\co g_p$. By Fourier analysis in the finite abelian group underlying $\nss_p$, we can write the function $F_p\co h_p:\nss_p\to \mb{C}$ as $F_p\co h_p(y_p)=\sum_{\chi^{(p)}}\lambda_{\chi^{(p)}} \chi^{(p)}(y_p)$, where the sum is over the Pontryagin dual of the group underlying $\nss_p$, dual group of order $O_{\delta,m,k}(1)$. Thus $F_p\co \phi_p(z_p) = \sum_{\chi^{(p)}}\lambda_{\chi^{(p)}} \chi^{(p)}(g_p(z_p))$.

It follows that $\varepsilon<|\langle f,\prod_{p\in\mc{P}} \sum_{\chi^{(p)}\in \widehat{\nss_p}}\lambda_{\chi^{(p)}} \chi^{(p)}(g_p(z_p))\rangle|$, where, expanding the product in the right side here, we see that this equals $|\langle f, \sum_{(\chi^{(p)}\in \widehat{\nss_p})_{p\in\mc{P}}}\prod_{p\in\mc{P}}\lambda_{\chi^{(p)}} \chi^{(p)}(g_p(z_p))\rangle|$. Note that each product in the latter expression is a scalar times a polynomial phase function. Since for every $p\in\mc{P}$ we have $|\nss_p|=O_{\delta,m,k}(1)$, the result follows by the pigeonhole principle.
\end{proof}

\noindent As mentioned in the introduction, we can now easily establish a positive answer for Question \ref{Q:JST} in the squarefree case.
\begin{corollary}\label{cor:inv-disct-primes}
Let $p_1,p_2,\ldots,p_r$ be distinct primes, let $m=p_1\cdots p_r$, let $\ab$ be a finite abelian group of torsion $m$, and let $\ab\cong \bigoplus_{i\in [r]} \ab_i$ be the primary decomposition of  $\ab$, where $\ab_i$ is a $p_i$-group for each $i\in [r]$. Then for any $k\in \mb{N}$ and $\delta>0$ there exists $\varepsilon=\varepsilon(\delta,k,m)>0$ such that the following holds.  For any 1-bounded function $f:\ab\to \mb{C}$ with $\|f\|_{U^{k+1}}>\delta$ there exists for each $i\in [r]$ a degree-$k$ polynomial $\phi_i\in\hom\big(\mc{D}_1(\ab_i),\mc{D}_k(\mb{T})\big)$ such that
$|\mb{E}_{x\in\ab} f(x)\overline{e(\phi_1(x_1)+\cdots+\phi_r(x_r))}|>\varepsilon$.
\end{corollary}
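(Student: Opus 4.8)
The plan is to adapt the proof of Corollary~\ref{cor:Q1.9reduc}: the point is that in the squarefree case Question~\ref{mainQ:p-case} can be answered in the affirmative outright, because the $p$-nilspaces that occur are $p$-homogeneous and the existing structure theory for $\mb{F}_p^n$ applies to them. Starting from a $1$-bounded $f:\ab\to\mb{C}$ with $\|f\|_{U^{k+1}}>\delta$, I would first invoke Theorem~\ref{thm:nil-for-m-exp} to produce a finite $m$-torsion nilspace polynomial $F\co\phi$ of degree $k$, of complexity at most $M=M(\delta,k,m)$, with $\phi$ as well-balanced as we please and $|\langle f,F\co\phi\rangle|\ge\tfrac12\delta^{2^{k+1}}$. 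Applying Theorem~\ref{thm:invreduc} then yields, for each $i\in[r]$, a $p_i$-nilspace polynomial $F_i\co\phi_i:\ab_i\to\mb{C}$ of complexity $O_M(1)$, with $\phi_i:\mc{D}_1(\ab_i)\to\ns_i$ still arbitrarily well-balanced, such that $|\mb{E}_{z\in\ab}f(z)\prod_{i\in[r]}F_i\co\phi_i(z_i)|\ge\varepsilon_0$ for some $\varepsilon_0=\varepsilon_0(\delta,k,m)>0$.

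Since $m=p_1\cdots p_r$ is squarefree, each $\ab_i$ is an elementary abelian $p_i$-group, i.e.\ $\ab_i\cong\mb{F}_{p_i}^{n_i}$, so $\phi_i$ is a morphism from $\mc{D}_1(\mb{F}_{p_i}^{n_i})$. Taking $\phi_i$ sufficiently balanced (a threshold depending only on $M$, which Theorem~\ref{thm:invreduc} allows us to meet), Proposition~\ref{prop:b-bal-m-bounded-finite} applied with the prime $p_i$ in place of $m$ forces every structure group of $\ns_i$ to be $p_i$-torsion; that is, $\ns_i$ is a $p_i$-homogeneous nilspace in the sense of \cite{CGSS-p-hom}. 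Hence \cite[Theorem~1.7]{CGSS-p-hom} applies to $\phi_i$: provided it is balanced enough (again a function of $M$), there exist a finite abelian $p_i$-group nilspace $\nss_i$, namely an abelian $p_i$-group equipped with the Host--Kra cubes of a degree-$k$ filtration, of cardinality $O_{M,k,p_i}(1)$, a morphism $g_i:\mc{D}_1(\ab_i)\to\nss_i$, and a map $h_i:\nss_i\to\ns_i$ with $\phi_i=h_i\co g_i$. This is exactly Question~\ref{mainQ:p-case} answered affirmatively in the squarefree case.

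The rest is bookkeeping as in Corollary~\ref{cor:Q1.9reduc}. Fourier analysis on the finite abelian $p_i$-group underlying $\nss_i$ writes $F_i\co h_i=\sum_\chi\lambda_\chi\,\chi$ as a combination of $O_{\delta,k,m}(1)$ characters, so $F_i\co\phi_i(z_i)=\sum_\chi\lambda_\chi\,\chi(g_i(z_i))$; and since every character $\chi$ of the degree-$k$ filtered group $\nss_i$ is a morphism $\nss_i\to\mc{D}_k(\mb{T})$ (a group homomorphism is a polynomial of degree $\le 1$, and the terms indexed by faces of size $>k$ vanish on the filtration), the composite $\chi\co g_i$ is a degree-$k$ polynomial $\phi_i^{(\chi)}\in\hom(\mc{D}_1(\ab_i),\mc{D}_k(\mb{T}))$ with $\chi(g_i(z_i))=e(\phi_i^{(\chi)}(z_i))$. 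Expanding $\prod_{i\in[r]}\sum_\chi\lambda_\chi\,e(\phi_i^{(\chi)}(z_i))$ into $O_{\delta,k,m}(1)$ summands of the form $c\cdot e\big(\sum_{i\in[r]}\phi_i^{(\chi_i)}(z_i)\big)$ and applying the pigeonhole principle produces one tuple $(\chi_i)_{i\in[r]}$ with $\big|\mb{E}_{x\in\ab}f(x)\,\overline{e\big(\sum_{i\in[r]}\phi_i^{(\chi_i)}(x_i)\big)}\big|>\varepsilon$ for a suitable $\varepsilon=\varepsilon(\delta,k,m)>0$ (the conjugate is harmless, since one may replace each $\phi_i^{(\chi_i)}$ by its negative); taking $\phi_i:=\phi_i^{(\chi_i)}$ finishes the proof. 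The one delicate point is matching the balance quantifiers: one must choose the balance demanded of $\phi$ in Theorem~\ref{thm:nil-for-m-exp} so that, after Theorem~\ref{thm:invreduc}, each $\phi_i$ is balanced enough both for Proposition~\ref{prop:b-bal-m-bounded-finite} and for \cite[Theorem~1.7]{CGSS-p-hom}, all thresholds being controlled by the complexity bound $M$; everything else is routine Fourier analysis and pigeonholing.
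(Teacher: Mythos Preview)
Your argument follows essentially the same route as the paper's proof, and the overall strategy is correct. There is one imprecision worth flagging: you invoke Proposition~\ref{prop:b-bal-m-bounded-finite} (with $m=p_i$) to conclude that the structure groups of $\ns_i$ are $p_i$-torsion, and then assert ``that is, $\ns_i$ is a $p_i$-homogeneous nilspace''. Having elementary abelian $p_i$-group structure groups is not the definition of $p_i$-homogeneity in \cite{CGSS-p-hom}, and Proposition~\ref{prop:b-bal-m-bounded-finite} (which generalizes \cite[Proposition~2.3]{CGSS-p-hom}) does not by itself yield that stronger structural conclusion. The paper instead invokes \cite[Theorem~1.3]{CGSS-p-hom} directly: a sufficiently balanced morphism $\mc{D}_1(\mb{F}_{p_i}^{n_i})\to\ns_i$ forces $\ns_i$ to be $p_i$-homogeneous, which is what \cite[Theorem~1.7]{CGSS-p-hom} requires. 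Once you replace the citation, your proof matches the paper's essentially line for line.
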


\begin{proof}
This follows from (the proof of) Corollary \ref{cor:Q1.9reduc} combined with the fact, proved in \cite{CGSS-p-hom}, that the answer to Question \ref{mainQ:p-case} is positive when $\ab$ is an \emph{elementary} abelian $p$-group. Let us give more detail. 
By \cite[Theorem 1.3]{CGSS-p-hom}, for any $k$-step \textsc{cfr} nilspace $\nss$ and any prime $p$ there exists $b=b(\nss,p)>0$ such that if there exists a $b$-balanced morphism $\phi:\mc{D}_1(\mb{Z}_p^n)\to \nss$ then $\nss$ is $p$-homogeneous.\footnote{This constant depends also on the particular metric chosen on the space of measures on $\cu^n(\nss)$, but we do not make this explicit here as we assume that each $k$-step \textsc{cfr} nilspace has one such metric fixed.} Combining Theorems \ref{thm:nil-for-m-exp} and \ref{thm:invreduc} we deduce that $f$ correlates with a product $\prod_{i\in [r]}F_i\co\phi_i$ where $\phi_i:\mc{D}_1(\ab_i)\to \ns_i$ are nilspace morphisms such that the size of the $p_i$-nilspace $\ns_i$ is bounded by a constant $M=M(\delta,m,k)>0$, for every $i\in [r]$. Moreover, by the second part of both theorems, we can assume that if the morphism $\phi:\mc{D}_1(\ab)\to \ns\cong\prod_{i\in [r]} \ns_i$ defined as  $z=(z_i)_{i\in [r]}\in \ab\mapsto \phi_i(z_i)$ is sufficiently balanced, then each $\phi_i$ is also $b$-balanced for any $b=b(\ns_i)$. In particular, we can choose this $b$ to be smaller than $\min_{i\in [r]} b(\ns_i,p_i)>0$ where $b(\ns_i,p_i)$ is the constant described in the previous paragraph. Therefore, we can assume that each $\ns_i$ is a $k$-step, $p_i$-homogeneous nilspace (by \cite[Theorem 1.3]{CGSS-p-hom}).

Now fix any $i\in [r]$ and consider $\phi_i:\mc{D}_1(\ab_i)\cong \mc{D}_1(\mb{Z}_{p_i}^n)\to \ns_i$. Then \cite[Theorem 1.7]{CGSS-p-hom}  answers Question \ref{mainQ:p-case} positively in the present case, namely, it provides a finite $k$-step, $p_i$-homogeneous \emph{abelian group} nilspace $\nss_i$ with size bounded in terms of the size of $\ns_i$ (and thus bounded in terms of $\delta,m$ and $k$), a fibration $h_i:\nss_i\to \ns_i$ and a morphism $g_i:\mc{D}_1(\ab_i)\to\nss_i$ such that $h_i\co g_i = \phi_i$. The rest of the argument can then be carried out using standard Fourier analysis on the abelian group $\nss_i$, as in the proof of Corollary \ref{cor:Q1.9reduc}.
\end{proof}

\begin{remark}
We included Corollary \ref{cor:inv-disct-primes} as a relatively simple illustration of the usefulness of the reduction to the $p$-group case afforded by Theorem \ref{thm:invreduc-intro}. After the first preprint version of this paper appeared, another way to obtain Corollary \ref{cor:inv-disct-primes} was observed by Jamneshan, Shalom and Tao (in personal communication). In a nutshell, this alternative proof argues by contradiction, taking the ultraproduct of supposed counterexamples to the desired inverse theorem and equipping this with Loeb measure to obtain an ergodic system, then using the ergodic theoretic Sylow decomposition \cite[Theorem 2.3]{JST-tot-dis} to separate this into ergodic factors that are then shown to be based on $p$-homogeneous nilspaces, and then combining this with our results from \cite{CGSS-p-hom} describing such nilspaces, in order to conclude. Due to the additional technicalities and background, we do not detail further this alternative argument here.
\end{remark}

\noindent We now establish the relation between the uniformity norms and the $(n,d)$-cut norms.

\begin{proof}[Proof of Corollary \ref{cor:box-norm-estimate}]
First, for each $i\in[n]$, decompose  $m_i=\prod_{j=1}^{r_i}p_{i,j}^{a_{i,j}}$ for some integers $r_i\ge 1$, $a_{i,j}\ge 1$, and primes $p_{i,j}$ for $j\in[r_i]$. Note that as the $\{m_i\}_{i\in[r]}$ are pairwise coprime, the primes $\{p_{i,j}\}_{i\in[n],j\in[r_i]}$ are all distinct. Thus we can apply Theorem \ref{thm:invreduc-intro} with $m:=\prod_{i=1}^n m_i = \prod_{i=1}^n\prod_{j=1}^{r_i}p_{i,j}^{a_{i,j}}$ and deduce that $f$ correlates with a function of the form $\prod_{p\mid m} F_p\co \phi_p(z_p)$ where the $F_p$ are 1-bounded.  This clearly equals a function of the form $\prod_{i=1}^n u_i(z_i)$ where for $i\in[n]$,  $u_i:=\prod_{j\in[r_i]}F_{p_{i,j}}\co \phi_{p_{i,j}}$. By definition of the $(n,1)$-cut norm the result follows.
\end{proof}

\begin{remark}\label{rem:upformulation} Our results on primary decompositions of finite nilspaces have a surprising analytic consequence for ultraproduct groups. Assume that $\{A_i\}_{i=1}^\infty$ and $\{B_i\}_{i=1}^\infty$ are infinite sequences of finite abelian groups with uniformly bounded torsions such that $|A_i|$ and $|B_i|$ are coprime. Let $A$ (resp.\ $B$) be the ultra product of $\{A_i\}_{i=1}^\infty$ (resp.\ $\{B_i\}_{i=1}^\infty$) with respect to a fixed non-principal ultrafilter. Our results imply that for every $k$ the so called $k$-th Fourier $\sigma$-algebra on $A\times B$ (formed by $U^{k+1}$ structured sets; see \cite[Definition 3.18]{CScouplings}) is the product of the $k$-th Fourier $\sigma$-algebras of $A$ and $B$. Note that the assumptions of uniformly bounded torsions and coprime orders are both necessary for this to hold. Furthermore, the statement generalizes to more than two components $A,B$, assuming pairwise coprimality (as in Corollary  \ref{cor:box-norm-estimate}). 
\end{remark}

\noindent We close this section with the observation that Theorem \ref{thm:p-sep} can be extended from finite nilspaces to $m$-torsion \emph{compact} (not necessarily finite) nilspaces, as follows.
\begin{corollary}\label{cor:p-sylow-cpct}
Let $m\in \mb{N}$ and let $\ns$ be a compact $k$-step $m$-torsion nilspace. Then $\ns$ is isomorphic (as a \emph{compact} nilspace) to $\prod_{p| m}\ns_p$, where for each $p|m$ the nilspace $\ns_p$ is a compact $k$-step $p$-nilspace.\footnote{Note that for some $p|m$, the nilspace $\ns_p$ may be the trivial one point nilspace.} Moreover, this decomposition is unique up to isomorphisms.
\end{corollary}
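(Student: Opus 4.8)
The plan is to bootstrap the finite case (Theorem \ref{thm:p-sep}) to the compact setting via the standard inverse-limit presentation of a compact finite-rank nilspace. First I would recall that any compact $k$-step nilspace $\ns$ is (topologically) the inverse limit of a sequence of compact finite-rank nilspaces; moreover, an $m$-torsion compact nilspace of finite rank is automatically finite (as noted in the proof of \cite[Proposition 2.3]{CGSS-p-hom} and used already in Section \ref{sec:nil-bnd-tor}). Hence $\ns\cong\varprojlim_{j}\ns^{(j)}$ where each $\ns^{(j)}$ is a \emph{finite} $m$-torsion $k$-step nilspace and the connecting maps $\pi_{j+1,j}:\ns^{(j+1)}\to\ns^{(j)}$ are surjective nilspace morphisms (fibrations). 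Applying Theorem \ref{thm:p-sep} to each $\ns^{(j)}$ gives $\ns^{(j)}\cong\prod_{p\in\mc{P}(|\ns^{(j)}|)}\ns^{(j)}_p$, and since all the $\ns^{(j)}$ have torsion $m$, their cardinalities only involve primes dividing $m$, so we may index uniformly over $p\mid m$ (allowing trivial factors). The key point is then that the connecting morphisms must respect this product structure: by Proposition \ref{prop:pmorphismsep} (applied to the finite $p$-nilspaces $\ns^{(j+1)}_p$ and $\ns^{(j)}_p$), each $\pi_{j+1,j}$ splits as $\prod_{p\mid m}\pi_{j+1,j}^{(p)}$ with $\pi_{j+1,j}^{(p)}:\ns^{(j+1)}_p\to\ns^{(j)}_p$ a morphism of $p$-nilspaces.

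Granting this, I would take inverse limits componentwise: set $\ns_p:=\varprojlim_j \ns^{(j)}_p$, which is a compact $k$-step nilspace, and it is a $p$-nilspace because its structure groups are inverse limits of the (finite) $p$-group structure groups of the $\ns^{(j)}_p$, and a closed subgroup/inverse limit of abelian $p$-groups is again a $p$-group (here one should note that the structure groups of an inverse limit are the inverse limits of the structure groups, via the standard compatibility of the nilspace-theoretic structure-group functor with inverse limits, cf.\ \cite[\S 2]{Cand:Notes2}). Since inverse limits commute with finite products, $\ns\cong\varprojlim_j \prod_{p\mid m}\ns^{(j)}_p\cong\prod_{p\mid m}\varprojlim_j\ns^{(j)}_p=\prod_{p\mid m}\ns_p$, and this isomorphism is a homeomorphism because all spaces involved are compact Hausdorff and the maps are continuous. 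This gives the desired decomposition.

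For uniqueness, I would argue exactly as in the finite case but with Lemma \ref{lem:coprime-implies-cte-2} upgraded to compact nilspaces: if $\ns_p,\nss_p$ ($p\mid m$) are compact $p$-nilspaces and $\varphi:\prod_p\ns_p\to\prod_p\nss_p$ is a nilspace isomorphism, then for $p\ne p'$ any morphism $\ns_{p'}\to\nss_p$ is constant, so $\varphi$ splits as $\prod_p\phi_p$ with $\phi_p:\ns_p\to\nss_p$ an isomorphism. The required ``coprime morphisms are constant'' statement in the compact case follows from the finite case by approximation: a morphism $\ns_{p'}\to\nss_p$ descends, after composing with a finite structured projection of $\nss_p$, to a morphism between finite nilspaces of coprime orders (one of $p$-power order, the other of order prime to $p$), hence is constant at each finite level, and passing to the limit it is constant. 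One then runs the same induction on step as in Lemma \ref{lem:coprime-implies-cte-2}, using \cite[Lemma 6.6]{CSinverse} and its consequences, which are already stated for \textsc{cfr} nilspaces.

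The main obstacle I anticipate is the bookkeeping at the level of connecting morphisms: one must verify that the finite-level decompositions from Theorem \ref{thm:p-sep} can be chosen \emph{compatibly} along the inverse system, i.e.\ that Proposition \ref{prop:pmorphismsep} really does force $\pi_{j+1,j}$ to be a product map for \emph{some} choice of the decompositions of $\ns^{(j+1)}$ and $\ns^{(j)}$ — equivalently, that the ``$p$-part'' functor is well-defined on finite $m$-torsion nilspaces and their morphisms, which is precisely the content of Theorems \ref{thm:p-sep} and \ref{thm:unique-p-sylow} together with Proposition \ref{prop:pmorphismsep}. Once this functoriality is in hand, the passage to the limit is routine, and the only remaining care is to ensure the structure-group computation identifying $\ns_p$ as a $p$-nilspace is carried out correctly via the compatibility of structure groups with inverse limits.
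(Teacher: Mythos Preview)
Your proposal is correct and follows essentially the same approach as the paper's proof outline: present $\ns$ as an inverse limit of finite $m$-torsion nilspaces via \cite[Theorem 2.7.3]{Cand:Notes2}, apply Theorem \ref{thm:p-sep} to each level, use Proposition \ref{prop:pmorphismsep} to split the connecting maps, take componentwise inverse limits, and then extend Proposition \ref{prop:pmorphismsep} to compact nilspaces for uniqueness. Your discussion of the compatibility bookkeeping and the structure-group computation for the limit $\ns_p$ is more detailed than the paper's brief sketch, but the argument is the same.
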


\noindent We outline the proof as follows. Recall firstly that every compact $k$-step nilspace is the inverse limit of compact $k$-step nilspaces of \emph{finite rank} (see  \cite[Theorem 2.7.3]{Cand:Notes2}) and if $\ns$ has its structure groups of torsion $m$, so do all its factors in the inverse system. Thus, the factors in the inverse system are finite nilspaces, which by Theorem \ref{thm:p-sep} split into their $p$-Sylow components. Moreover, by Proposition \ref{prop:pmorphismsep} the morphisms in the inverse system also split according to these $p$-Sylow components. We conclude that in fact $\ns$ is a product of inverse limits of $p$-nilspaces and the decomposition follows. We can then  extend again Proposition \ref{prop:pmorphismsep} to the case of compact nilspaces and use it to prove the uniqueness of the decomposition. We omit the details.

\section{A $U^3$ inverse theorem in the $m$-torsion setting}\label{sec:proof-of-main}
\noindent In this section we prove Theorem \ref{thm:mainU3}. Given an $m$-torsion finite abelian group $\ab$ and a 1-bounded function $f$ on $\ab$ with $\|f\|_{U^3}\ge \delta>0$, our starting point is the conclusion of Theorem \ref{thm:invreduc-intro}, namely that $f$ correlates with a function of the form $\prod_{p\in \mc{P}}F_p\co \phi_p(z_p)$, where $\mc{P}=\mc{P}(|\ab|)$ and $\ab=\bigoplus_{p\in \mc{P}}\ab_p$ is the primary decomposition of $\ab$ (so each $z\in\ab$ is uniquely written as  $(z_p)_{p\in \mc{P}}$, with $z_p\in \ab_p$). Our strategy to prove Theorem \ref{thm:mainU3} is then  basically to establish a positive answer to Question \ref{mainQ:p-case} for the $U^3$-norm. More precisely, fixing any prime $p\in \mc{P}$, we shall prove that for the above morphism $\phi_p:\ab_p\to\ns_p$, there is another morphism from $\ab_p$ to a 2-step \emph{abelian} group nilspace, which \emph{refines} $\phi_p$ as per the following terminology from \cite{CGSS}.
\begin{defn}\label{def:refined-morphism}
Given two maps $\psi_i:A\to B_i$, $i=1,2$ defined on all of $A$ (but with $B_1$, $B_2$ possibly different spaces), we say that $\psi_2$ \emph{refines} $\psi_1$, and write $\psi_1\lesssim \psi_2$, if the partition generated by $\psi_2$ refines the partition generated by $\psi_1$, i.e.\ if the partitions $\mc{P}_i:=\{\psi_i^{-1}(y):y\in\psi_i(A)\}$, $i=1,2$ satisfy that every set in $\mc{P}_1$ is a union of some sets in $\mc{P}_2$.    
\end{defn}
\noindent Note that clearly $\psi_2$ refines $\psi_1$ if and only if there is a map $h:\psi_2(A)\to \psi_1(A)$ such that $\psi_1=h\co \psi_2$ (namely the map $h$ taking each $x=\psi_2(a)\in \psi_2(A)$ to $\psi_1(a)$).

Thus, our plan to prove Theorem \ref{thm:mainU3} is to show that there exists a finite abelian group $G$, of order bounded in terms of $|\ns_p|$ only, a degree-2 filtration $G_\bullet$ on $G$, and a morphism $\psi:\ab_p\to G$ (where $G$ is given the group nilspace structure associated with $G_\bullet$), such that $\phi\lesssim \psi$. 

The main tool that we will use is the following theorem, which generalizes (in a weaker form) a ``quadratic Hahn-Banach" type result known in the $\mb{F}_p^n$ setting (see e.g.\ \cite[Exercise 11.2.7]{Tao-Vu}).
\begin{theorem}\label{thm:mainextthm} For any positive integers $n,r,e$ and any prime $p$, there exists\footnote{An explicit bound for $c$ in terms of $n,r,e,p$ can be deduced from our arguments, but this is not needed here.} $c=c(n,r,e,p)>0$ such that the following holds. Let $k\in \mb{N}$, let $A$ be a $p^e$-torsion finite abelian group and let $H\leq A$ be a subgroup of index $r$. Let $B$ be a finite abelian group of order $n$, and let $\phi:\mathcal{D}_1(H)\to\mathcal{D}_k(B)$ be a nilspace morphism. Then there is an abelian group $C$ of order at most $c$ and a morphism $\psi:\mathcal{D}_1(A)\to\mathcal{D}_k(C)$ such that $\phi\lesssim\psi|_H$. 
\end{theorem}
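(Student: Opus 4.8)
\noindent\textit{Proof plan.} The plan is to reduce the statement to a structural decomposition of the pair $(A,H)$ and then write down $\psi$ by hand. Note first that if $r$ is not a power of $p$, then $A$, being a $p$-group, has no subgroup of index $r$, so the statement is vacuous; thus assume $r=p^t$ with $t=\log_p r$. By the standard theory of pairs of finite abelian $p$-groups (existence of stacked, or adapted, bases), we may fix an identification $A\cong\bigoplus_{i=1}^s\mb{Z}/p^{e_i}\mb{Z}$, with each $e_i\le e$ since $A$ is $p^e$-torsion, under which $H=\bigoplus_{i=1}^s p^{f_i}\mb{Z}/p^{e_i}\mb{Z}$ for integers $0\le f_i\le e_i$. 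Then $r=[A:H]=p^{\sum_i f_i}$, so $\sum_i f_i=t$, and hence $S:=\{i:f_i>0\}$ has size $|S|\le t$. Setting $A':=\bigoplus_{i\in S}\mb{Z}/p^{e_i}\mb{Z}$ and $A'':=\bigoplus_{i\notin S}\mb{Z}/p^{e_i}\mb{Z}$, we obtain $A=A'\oplus A''$ together with the compatible splitting $H=H'\oplus A''$ where $H':=\bigoplus_{i\in S}p^{f_i}\mb{Z}/p^{e_i}\mb{Z}\le A'$. The key gain is that $|H'|\le|A'|=\prod_{i\in S}p^{e_i}\le p^{|S|e}\le p^{te}=r^e$, a bound in terms of $r,e$ only.

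\noindent Next I would construct $\psi$. Put $C:=A'\times B^{H'}$, so that $|C|=|A'|\cdot n^{|H'|}\le r^e\,n^{r^e}=:c$, which depends only on $n,r,e,p$ (and not on $k$). For each $h'\in H'$, freezing the first coordinate in the identity $H=H'\oplus A''$ shows that $\phi_{h'}\colon A''\to B$, $a''\mapsto\phi(h',a'')$, is a polynomial map of degree $\le k$. Define $\psi\colon A\to C$ by $\psi(a',a''):=\big(a',\,(\phi_{h'}(a''))_{h'\in H'}\big)$. To see that $\psi$ is a morphism $\mc{D}_1(A)\to\mc{D}_k(C)$ one uses that $C$ is a finite product and checks the coordinates separately: the first coordinate is the homomorphism $A\to A'$, hence a morphism $\mc{D}_1(A)\to\mc{D}_1(A')\hookrightarrow\mc{D}_k(A')$; and for each $h'$ the map $a\mapsto\phi_{h'}(\pi''(a))$ is the composite of the homomorphism $\pi''\colon\mc{D}_1(A)\to\mc{D}_1(A'')$ with the morphism $\phi_{h'}\colon\mc{D}_1(A'')\to\mc{D}_k(B)$. (Note that $B$ need not be a $p$-group for any of this.)

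\noindent It remains to check $\phi\lesssim\psi|_H$ in the sense of Definition \ref{def:refined-morphism}. For $h=(h',a'')\in H=H'\oplus A''$ we have $\psi(h)=\big(h',\,(\phi_{h_0'}(a''))_{h_0'\in H'}\big)$; from this datum one recovers $h'$ from the first coordinate and then $\phi(h)=\phi(h',a'')=\phi_{h'}(a'')$ as the entry indexed by $h_0'=h'$. Hence $\psi(h)$ determines $\phi(h)$, i.e.\ $\psi|_H$ refines $\phi$; explicitly $\phi=\eta\co(\psi|_H)$ where $\eta$ sends $\big(u',(c_{h_0'})_{h_0'}\big)\mapsto c_{u'}$. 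Taking $c:=r^e\,n^{r^e}$ (and any value of $c$ in the vacuous case) completes the proof.

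\noindent I expect the crux to be the first step: extracting from the pair $(A,H)$ a direct summand $A'$ of $A$ of order bounded purely in terms of $r$ and $e$ that captures all the ``twisting'' of $H$ inside $A$, with complement $A''\le H$. This is exactly where the hypotheses (that $A$ is a $p$-group of bounded torsion and $H$ has bounded index) enter, via stacked bases for finite abelian $p$-groups; after it, the argument is formal bookkeeping. The passage to a mere refinement, together with the enlargement of the target from $B$ to $C=A'\times B^{H'}$, is precisely what substitutes for the complement of $H$ in $A$ that is available in the $\mb{F}_p^n$ setting but absent in general.
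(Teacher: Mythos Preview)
Your argument has a genuine gap at the first structural step. The ``stacked (adapted) basis'' decomposition you invoke --- an identification $A\cong\bigoplus_i\mb{Z}/p^{e_i}\mb{Z}$ under which $H=\bigoplus_i p^{f_i}\mb{Z}/p^{e_i}\mb{Z}$ --- is \emph{not} available for arbitrary pairs of finite abelian $p$-groups. The adapted-basis theorem is a statement about submodules of \emph{free} modules over a PID; a $p^e$-torsion abelian group is not free over $\mb{Z}$, and unless it happens to be $(\mb{Z}/p^e\mb{Z})^n$ it is not free over $\mb{Z}/p^e\mb{Z}$ either. A concrete counterexample (essentially the one in the paper's remark following the proof of Theorem~\ref{thm:mainU3}) is $A=\mb{Z}/3\mb{Z}\times\mb{Z}/27\mb{Z}$ with $H=\langle(1,3)\rangle$, cyclic of order $9$ and index $9$. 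Any cyclic decomposition $A=C_1\oplus C_2$ has $|C_1|=3$, $|C_2|=27$; the only option compatible with $H\cong\mb{Z}/9\mb{Z}$ would be $H=3C_2$, forcing $(1,3)\in 3A$, but $3\cdot(u,v)=(0,3v)$ always has first coordinate $0$. So no such adapted basis exists.

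What you actually need from this step --- a direct summand $A''\le H$ of $A$ whose complement $A'$ has order bounded in terms of $r$ and $e$ --- is exactly the content of the paper's Proposition~\ref{prop:comp1}, and proving it is the real work in this section. The paper obtains it via a separate inductive construction (Lemmas~\ref{lem:larger-has-complement} and~\ref{lem:enlarge-for-complement}), not by any off-the-shelf basis theorem; indeed, that remark in the paper is there precisely to flag that the naive route fails. Once such a complemented subgroup is in hand, your construction of $\psi$ (recording the $A'$-component together with the tuple $(\phi(h',a''))_{h'\in H\cap A'}$) is correct and essentially the same as the paper's, the extra $A'$-coordinate you carry being a harmless way to make the refinement map $\eta$ explicit.
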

\begin{remark}
Generalizing quadratic Hahn-Banach results (such as \cite[Exercise 11.2.7]{Tao-Vu}) from finite vector spaces to $p^e$-torsion abelian groups \emph{requires} some weakening such as the one in Theorem \ref{thm:mainextthm}, where the new map $\psi$ extends the old map $\phi$ only in the weaker sense that $\psi$ \emph{refines} $\phi$ as per Definition \ref{def:refined-morphism}. Indeed, it can be shown that a direct extension in this more general bounded-torsion setting is not always possible. For instance, for any odd prime $p$, let $G=\mb{Z}_p\times\mb{Z}_{p^2}$, let $H$ be the subgroup $\mb{Z}_p\times p\mb{Z}_{p^2}\leq G$, and let  $\phi:H\to\mb{T}$, $(x,py)\mapsto \tfrac{2}{p}xy$ (where $\mb{T}=\mb{R}/\mb{Z}$). It can be checked that $\phi$ is a well-defined quadratic map on $H$ but that it cannot be extended to a globally quadratic phase function on $G$. For if it could be extended then by \cite[Lemma 3.1]{GT08} the extension $\phi':G\to\mb{T}$ should be of the form $\phi'(x,y)=\langle M(x,y),(x,y)\rangle+\langle (\xi_1,\xi_2),(x,y)\rangle+\theta$ for some self-adjoint homomorphism $M:G\to G$ and elements $\xi\in G$ and $\theta\in \mb{T}$, and where $\langle \cdot,\cdot\rangle:G\times G\to \mb{T}$ is the symmetric non-degenerate bilinear form  $\langle (x,y),(x'y')\rangle:=\frac{1}{p}xx'+\frac{1}{p^2}yy'$ (recall \cite[Definition 4.1]{Tao-Vu}). From this we could then deduce, letting $M(1,0)=(a,pb)$ and $M(0,1)=(c,d)$ for integers $a,b,c,d$, that $\phi'(x,y)=\frac{1}{p}ax^2+\frac{1}{p^2}pbxy+\frac{1}{p}cxy+\frac{1}{p^2}dy^2+\frac{1}{p}\xi_1 x+\frac{1}{p^2}\xi_2 y+\theta$. Then, since we must have  $\phi'(x,py)=\tfrac{2}{p}xy$, evaluating at $x=0$ and $y=0$ separately we would deduce  $\phi'(x,py)=\frac{1}{p}ax^2+\frac{1}{p}\xi_1 x$. Evaluating this again in $y=0$, we would conclude that $\phi'$ is identically 0-valued on $H$, which is impossible since $\phi$ clearly is not identically 0. (Note that \cite[Proposition 3.2]{GT08} holds on $\mb{F}_p^n$ but not in its stated level of generality.)
\end{remark}

\noindent We split the proof of Theorem \ref{thm:mainextthm} into several steps. 

Recall that given a finite abelian group $A$ and a subgroup $H\le A$, we say that $H$ has a \emph{complement in $A$} if there exists a subgroup $K\le A$ such that $K+H=A$ and $K\cap H=\{0\}$.

\begin{proposition}\label{prop:comp1} Let $p$ be a prime number, and let $n,r$ be positive integers. 
Let $A$ be a $p^n$-torsion abelian group and let $H\leq A$ be a subgroup of index $r$. Then there is a subgroup $H'\leq H$ of index at most $r^{n^2+n}$ such that $H'$ has a complement in $A$. 
\end{proposition}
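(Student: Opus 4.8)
My plan is to first reduce the statement to a purity problem. Since $A$ is an abelian $p$-group of bounded exponent, a subgroup $K\le A$ has a complement in $A$ if and only if $K$ is a direct summand of $A$, which by a standard fact about bounded abelian groups holds if and only if $K$ is \emph{pure} in $A$, i.e.\ $K\cap p^iA=p^iK$ for every $i\ge 1$. So it suffices to produce a pure subgroup $H'\le H$ with $[A:H']$ (equivalently $[H:H']$) bounded by a power of $r$; the target is to keep this at most $r^{n^2+n}$.

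I would then induct on $n$. The case $n=1$ is immediate: $A$ is elementary abelian, every subgroup is pure, and $H'=H$ works. For the inductive step I would pass to $\bar A:=A/p^{n-1}A$, which is $p^{n-1}$-torsion; the image $\bar H$ of $H$ has index at most $r$ in $\bar A$, so by the inductive hypothesis there is a pure subgroup $\bar H'\le\bar H$ of index a bounded power of $r$ in $\bar A$. Pulling back along the quotient map $\pi\colon A\to\bar A$, I would set $H_1:=\pi^{-1}(\bar H')\cap H$, a subgroup of $H$ of bounded index whose image under $\pi$ is exactly $\bar H'$ (here one uses $\bar H'\subseteq\bar H=\pi(H)$). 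The structural fact to extract is that, combining the purity of $\bar H'$ with the modular law, one gets
\[
H_1\cap p^mA \;=\; p^mH_1 + (H_1\cap p^{n-1}A)\qquad\text{for }1\le m\le n-1,
\]
so that $H_1$ is pure in $A$ if and only if $H_1\cap p^{n-1}A=p^{n-1}H_1$.

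The remaining work is to correct this single discrepancy in the bottom layer. Writing $V:=p^{n-1}A$ (an $\mathbb{F}_p$-vector space), $W_0:=p^{n-1}H_1$ and $W:=H_1\cap V$, one has $W_0\le W\le V$ with $[V:W_0]\le[A:H_1]$ a bounded power of $r$. I would look for $H'\le H_1$ with $\pi(H')=\pi(H_1)$ — equivalently $H'+W=H_1$, which preserves the displayed identity for $H'$ and forces $p^{n-1}H'=W_0$ — and with $H'\cap W=W_0$; any such $H'$ is then pure in $A$. This is exactly the problem of choosing $H'/W_0$ to be a complement of the elementary abelian subgroup $W/W_0$ inside $H_1/W_0$, which is possible whenever $pH_1\cap p^{n-1}A\subseteq p^{n-1}H_1$. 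That inclusion holds automatically when $n=2$; for larger $n$ I would arrange it by a further passage to a bounded-index subgroup of $H_1$ that kills the (boundedly many) obstructing witnesses to $p^{n-1}$-divisibility. Bookkeeping the index lost at each stage — the quotient, the pull-back, and the complement step — then yields the bound $[A:H']\le r^{n^2+n}$.

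The hard part is this last correction step: one must make $p^{n-1}$-divisibility in $A$ agree with $p^{n-1}$-divisibility inside the subgroup while simultaneously not disturbing the image in $\bar A$ (so the identity inherited from the induction survives) and not losing more than a controlled power of $r$. It is precisely the interplay between the various heights $p^i$ across the $n$ layers that is responsible for the exponent being quadratic in $n$ rather than linear.
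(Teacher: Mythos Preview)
Your approach via purity and induction on the exponent is genuinely different from the paper's, and the first two-thirds of your outline are sound: the reduction to purity is correct, the displayed identity $H_1\cap p^mA=p^mH_1+(H_1\cap p^{n-1}A)$ does follow from the purity of $\bar H'$ (using that $\ker\pi\subseteq p^mA$), and the reformulation of the final correction as a complement problem for $W/W_0$ inside $H_1/W_0$ is accurate, including the criterion $pH_1\cap p^{n-1}A\subseteq p^{n-1}H_1$.

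The gap is precisely where you flag it. For $n\ge 3$ you do not actually carry out the ``further passage to a bounded-index subgroup of $H_1$ that kills the obstructing witnesses''. Shrinking $H_1$ to some $H_1'$ may remove elements from $pH_1\cap p^{n-1}A$, but it also shrinks the target $p^{n-1}H_1$, so it is not clear the quotient $(pH_1'\cap p^{n-1}A)/p^{n-1}H_1'$ gets smaller; and you must simultaneously keep $\pi(H_1')=\bar H'$ so that the inductive identity survives. You give no mechanism for doing both at once, and without it the index bookkeeping leading to $r^{n^2+n}$ cannot be checked either. This step is the heart of the matter and, as written, it is a hope rather than an argument.

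For comparison, the paper avoids this difficulty entirely by working from the \emph{complement} side rather than from inside $H$. One chooses a subgroup $T\le A$ generated by $t\le\log_p r$ elements with $H+T=A$, sets $Q=H\cap T$ (also of rank $\le t$), and then uses an auxiliary lemma: any element of a $p^n$-torsion group lies in a complemented subgroup of order at most $p^{n^2}$, hence any subgroup of rank $\le t$ embeds in a complemented subgroup $Q'\le H$ of order at most $p^{tn^2}$. Taking $H'$ to be the complement of $Q'$ in $H$, one checks that $Q'+T$ complements $H'$ in $A$, and the index bound $[A:H']\le|Q'||T|\le p^{tn^2+tn}\le r^{n^2+n}$ drops out immediately. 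The key lemma (embedding a single element into a small complemented subgroup) is proved by an explicit coordinate argument, and no purity or layer-by-layer correction is needed.
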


The proof of this proposition will use the following lemma.
\begin{lemma}\label{lem:larger-has-complement} Let $A$ be a $p^n$-torsion finite abelian group and suppose that $H\leq A$ is a subgroup generated by at most $r$ elements. Then there is a subgroup $H'$ with $H\leq H'\le A$ such that $|H'|\leq p^{n^2r}$ and $H'$ has a complement in $A$.
\end{lemma}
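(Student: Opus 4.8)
The plan is to reduce to a finite abelian $p$-group by ``bounding the relevant part of $A$'' and then invoke the primary/cyclic decomposition for finite abelian $p$-groups. First I would fix generators $h_1,\dots,h_r$ of $H$ and, using the structure theorem for finitely generated abelian groups (applied to the subgroup $\langle h_1,\dots,h_r\rangle\leq A$, which is a $p^n$-torsion group generated by $\le r$ elements, hence a finite abelian $p$-group of order at most $p^{nr}$), write $H$ as a direct sum of at most $r$ cyclic $p$-groups, each of order dividing $p^n$. The key structural input is that since $A$ is $p^n$-torsion, every cyclic subgroup of $A$ has order dividing $p^n$, so the ``obstruction'' to $H$ having a complement is controlled: by the Prüfer--Baer theory (equivalently, the fact that a pure subgroup of bounded exponent is a direct summand), if $H$ were \emph{pure} in $A$ then $H$ itself would split off, but purity may fail. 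The idea is therefore to enlarge $H$ to its \emph{purification} $H^*$ (the set of $a\in A$ with $p^j a\in H$ for some $j$, restricted appropriately), which is pure, bounded (exponent dividing $p^n$), hence a direct summand of $A$; and then bound $|H^*/H|$.

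The main steps, in order, are as follows. (1) Reduce to the finite case: replace $A$ by the finite subgroup $A_0:=\langle H, \text{a finite set witnessing purification}\rangle$ — more carefully, observe that whether $H$ has a complement inside a bounded-index overgroup is a question internal to a finite $p$-group, so we may assume $A$ finite. (2) Let $H'$ be the pure closure (purification) of $H$ in $A$: $H' = \{a\in A : \exists j\ge 0,\ p^j a\in H\}$. Then $H'$ is pure in $A$, and since $A$ is $p^n$-torsion, $H'$ has exponent dividing $p^n$; by the theorem that a pure subgroup of bounded exponent is a direct summand (Kulikov/Prüfer), $H'$ has a complement in $A$. (3) Bound $|H'|$: each element of $H'/H$ is killed by $p^n$, and $H'/H$ is generated by ``division witnesses'' for the $\le r$ generators of $H$ — concretely, $H'$ is generated by $H$ together with at most $r$ elements (one per cyclic factor of $H$, each contributing a chain of length $\le n$ of $p$-divisions), so $H'$ is generated by at most $r$ elements and has exponent $\le p^n$, giving $|H'|\le p^{n\cdot(\text{number of generators})}$; tracking the bound on the number of generators of $H'$ carefully yields $|H'|\le p^{n^2 r}$.

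The step I expect to be the main obstacle is Step (3): obtaining the explicit bound $|H'|\le p^{n^2 r}$. The subtlety is that the purification $H'$ need not be generated \emph{by $H$ together with only $r$ new elements} in a naive way — one must argue that each of the $\le r$ cyclic summands $\langle h_i\rangle\cong \mb{Z}/p^{e_i}\mb{Z}$ of $H$ (with $e_i\le n$) can be ``pushed up'' inside $A$ to a cyclic subgroup $\langle h_i'\rangle$ with $p^{t_i}h_i' = h_i$ for some $t_i$, and that one can do this \emph{simultaneously} so that $H' = \langle h_1',\dots,h_r'\rangle$ (possibly after also splitting off a complement step by step). Since each $h_i'$ has order $\le p^n$, the group $\langle h_1',\dots,h_r'\rangle$ has order at most $p^{nr}$; then $|H'|\le p^{nr}\le p^{n^2 r}$, which is even better than claimed, but making the simultaneous-pushing-up argument rigorous (i.e.\ that the purification is generated by at most $r$ elements, or at worst $nr$ elements, giving the stated $p^{n^2r}$) requires care with how the cyclic factors interact in $A$. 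An alternative, cleaner route for this step is to use the known classification of finite abelian $p$-groups to write $A = \bigoplus_j C_j$ as a sum of cyclic groups and analyze the image of the $\le r$ generators of $H$ coordinatewise, deducing that $H'$ is contained in a sum of at most $r$ of the (bounded-exponent) cyclic summands of $A$ together with bounded-index modifications; I would adopt whichever of these gives the cleanest bookkeeping, the essential point being that boundedness of exponent plus boundedness of generator count forces boundedness of $|H'|$, and that purity delivers the complement.
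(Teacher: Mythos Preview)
There is a genuine gap in Step~(2): your definition of the purification, $H' = \{a\in A : \exists j\ge 0,\ p^j a\in H\}$, equals all of $A$. Indeed $A$ is $p^n$-torsion, so $p^n a = 0 \in H$ for every $a\in A$. With this definition you get no size bound whatsoever.

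Even if you replace this by the correct notion of a pure hull (a minimal pure subgroup containing $H$), your Step~(3) claim that such an $H'$ can be generated by $r$ elements is false. The paper itself records the counterexample (in the Remark following the proof of Theorem~\ref{thm:mainU3}): in $A=\mb{Z}_3\times\mb{Z}_{27}$ with $H=\langle(1,3)\rangle$, the cyclic group $H$ has rank $1$ but is not contained in any rank-$1$ direct summand of $A$; the smallest complemented subgroup containing $H$ is $A$ itself, of rank $2$. So the ``$|H'|\le p^{nr}$'' version of your bound is impossible, and the fallback to $nr$ generators (yielding $p^{n^2r}$) is exactly the nontrivial content of the lemma, for which you give no argument.

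The paper's route is different and more hands-on. It first proves a one-generator version (Lemma~\ref{lem:enlarge-for-complement}): any single $x\in A$ lies in a complemented subgroup of order at most $p^{n^2}$, built by an explicit coordinate procedure that successively peels off cyclic direct summands from $A$ (at most $n$ iterations, each contributing a factor of order $\le p^n$). Then Lemma~\ref{lem:larger-has-complement} follows by induction on $r$: having found a complemented $G'\supseteq\langle x_1,\dots,x_{r-1}\rangle$ with complement $K$, one writes $x_r=k+g$ with $k\in K$, $g\in G'$, applies the one-generator lemma to $k$ inside $K$, and takes $H'=G'+C$ where $C$ is the resulting complemented subgroup of $K$. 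This sidesteps any appeal to pure-subgroup theory and gives the stated bound directly.
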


To prove this we will use the following fact about abelian $p$-groups.
\begin{lemma}\label{lem:enlarge-for-complement}
Let $p$ be a prime, let $n$ be a positive integer, and let $A$ be a $p^n$-torsion finite abelian group. Then for every $x\in A$ there exists a subgroup $H\le A$ of order at most $p^{n^2}$ such that $x\in H$ and $H$ has a complement in $A$.
\end{lemma}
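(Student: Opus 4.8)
\emph{Proof proposal.} The plan is to split $A$ into its homogeneous components and handle the corresponding pieces of $x$ separately, rather than trying to place $x$ inside a single cyclic direct summand (which, as simple examples show, is in general impossible: e.g.\ $x=p^2a+b$ in $\mathbb{Z}_{p^3}\oplus\mathbb{Z}_p$ lies in no cyclic direct summand).

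First I would use the structure theorem for finite abelian groups to write $A=\bigoplus_{i=1}^{n}B_i$, where $B_i$ is the homogeneous subgroup of exponent $p^i$, i.e.\ $B_i\cong\mathbb{Z}_{p^i}^{d_i}$ for some $d_i\ge 0$; this is just a regrouping of the cyclic factors of $A$ by their orders, possible since $A$ has exponent dividing $p^n$. Write $x=\sum_{i=1}^{n}x_i$ accordingly, with $x_i\in B_i$. The advantage of working inside each homogeneous $B_i$ is that an element there is well-behaved: if $x_i\ne 0$, then expressing $x_i$ in a basis of $B_i$ and factoring out the least $p$-power $p^{v_i}$ appearing among its coordinates, one obtains $x_i=p^{v_i}x_i'$ with $x_i'\in B_i$ having at least one unit coordinate, hence $x_i'$ of order $p^i$, which is exactly the exponent of $B_i$.

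Next I would invoke the classical fact that an element of maximal order in a finite abelian $p$-group generates a cyclic direct summand. Applying it inside $B_i$, the subgroup $C_i:=\langle x_i'\rangle$ is a direct summand, $B_i=C_i\oplus B_i'$, with $|C_i|=p^i$ and $x_i=p^{v_i}x_i'\in C_i$; for indices $i$ with $x_i=0$ take $C_i=0$ and $B_i'=B_i$. Setting $H:=\bigoplus_{i=1}^{n}C_i$, we then have $x=\sum_i x_i\in H$, and $H$ is a direct summand of $A$ with complement $\bigoplus_{i=1}^{n}B_i'$, since $A=\bigoplus_i B_i=\bigoplus_i(C_i\oplus B_i')$. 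Finally $|H|=\prod_{i=1}^{n}|C_i|\le\prod_{i=1}^{n}p^i=p^{n(n+1)/2}\le p^{n^2}$, as required. (The case $x=0$ is trivial, taking $H=0$.)

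I do not expect a genuine obstacle here: the homogeneous decomposition reduces everything to the homogeneous case, where the maximal-order summand lemma does the work. The only points needing care are the passage from $x_i$ to $x_i'$ and the verification that $x_i'$ attains the exponent of $B_i$, so that the lemma applies; and the conceptual heart of the argument — that one must decompose $A$ first, because a single element of $A$ need not lie in any cyclic direct summand.
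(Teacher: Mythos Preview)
Your proof is correct and in fact cleaner than the paper's, yielding the slightly sharper bound $|H|\le p^{n(n+1)/2}$. The key move --- decomposing $A$ into its homogeneous components $B_i\cong\mathbb{Z}_{p^i}^{d_i}$ up front, so that within each $B_i$ the maximal-order summand lemma applies directly to a $p$-power multiple of $x_i$ --- is different from the paper's approach. The paper instead argues iteratively: it replaces $x$ by an element $x'$ with no $p$-th root, splits off the coordinates of $x'$ that are units (where $\langle x'\rangle$ restricted to those factors is maximal cyclic, hence a summand), and recurses on the remaining coordinates after taking a $p$-th root; termination in at most $n$ steps comes from the exponent bound. Both arguments rest on the same classical lemma (a maximal-order element in a finite abelian $p$-group generates a direct summand), but your homogeneous decomposition avoids the recursion entirely and makes the size bound transparent.

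One minor correction: your motivating example is not quite right. The element $x=(p^2,1)$ in $\mathbb{Z}_{p^3}\oplus\mathbb{Z}_p$ \emph{does} lie in a cyclic direct summand, namely $\langle x\rangle$ itself, with complement $\mathbb{Z}_{p^3}\oplus 0$. The paper's own example (in the remark following its proof) works: $x=(1,3)\in\mathbb{Z}_3\times\mathbb{Z}_{27}$ has no $p$-th root yet $\langle x\rangle$ has no complement, and one checks $x$ lies in no cyclic direct summand at all. This does not affect your proof, only the introductory motivation.
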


\begin{proof}
If $x=0$ then the result is trivial so we can assume that $x\not=0$.

Without loss of generality, we can replace $x$ with an element $x'\in A$ such that $x\in \langle x'\rangle$ and $x'$ has no $p$-th roots. By assumption we have  $A=\prod_{i=1}^\ell \mb{Z}_{p^{a_i}}$ where $a_i\le n$ for all $i\in[\ell]$ by definition of $A$. Rearranging these factors $\mb{Z}_{p^{a_i}}$ if necessary, we can assume that for every $x'=(x_1',\ldots,x_\ell')\in \prod_{i=1}^\ell \mb{Z}_{p^{a_i}}$ the first ${r^{(1)}}$ components $x_1',\ldots,x_{r^{(1)}}'$ are coprime with $p$ and the remaining components $x'_{{r^{(1)}}+1},\ldots,x'_\ell$ are multiples of $p$. Note that ${r^{(1)}}\ge 1$, otherwise $x'$ would have a $p$-th root. We now decompose $A$ as $A^{(1)}\times \{0^{\ell-{r^{(1)}}}\}\oplus \{0^{r^{(1)}}\}\times B^{(1)}$ where $A^{(1)}:=\prod_{i=1}^{r^{(1)}} \mb{Z}_{p^{a_i}}$ and $B^{(1)}:=\prod_{i={r^{(1)}}+1}^\ell \mb{Z}_{p^{a_i}}$. Now consider $x^{(1)}:=(x_1',\ldots,x_{r^{(1)}}')\in A^{(1)}$ and $y^{(1)}:=(x'_{{r^{(1)}}+1},\ldots,x'_\ell)\in B^{(1)}$. Clearly $x'=(x^{(1)},0^{\ell-{r^{(1)}}})+(0^{{r^{(1)}}},y^{(1)})$. Then $\langle x^{(1)}\rangle$ is a cyclic subgroup of maximal order inside $A^{(1)}$ and thus by known results ( e.g.\ \cite[p. 44, 2.1.2]{KSfintie groups}) the subgroup $\langle x^{(1)}\rangle$ of $A^{(1)}$ has a complement in $A^{(1)}$.

Note now that if $y^{(1)}=0\in B^{(1)}$ (this happens also in particular if $r^{(1)}=\ell$) then we can take $H:=\langle (x^{(1)},0^{\ell-r^{(1)}})\rangle\le A$ and the result follows (the complement of $H$ would then be $C^{(1)}\times B^{(1)}$). Hence we can assume that $y^{(1)}\in B^{(1)}$ is not zero, and since all its coordinates are multiples of $p$, we deduce that $y^{(1)}$ has a non-zero $p$-th root ${y^{(1)}}'\in B^{(1)}$.

Now we repeat the argument with ${y^{(1)}}'\in B^{(1)}$. That is, first we replace ${y^{(1)}}'$ with some element which has no $p$-th root. Abusing the notation, let us denote this element by ${y^{(1)}}'$. Then we decompose $B^{(1)}=\prod_{i=r^{(1)}+1}^\ell \mb{Z}_{p^{a_i}}$ in two parts:  $B^{(1)}=A^{(2)}\times \{0^{\ell-r^{(1)}-r^{(2)}}\}\oplus \{0^{r^{(2)}}\}\times B^{(2)}$ where $A^{(2)}:=\prod_{i=r^{(1)}+1}^{r^{(2)}} \mb{Z}_{p^{a_i}}$ and $B^{(2)}:=\prod_{i=r^{(1)}+r^{(2)}+1}^\ell \mb{Z}_{p^{a_i}}$ such that there are two elements $x^{(2)}\in A^{(2)}$ and $y^{(2)}\in B^{(2)}$ satisfying ${y^{(1)}}' = (x^{(2)},0^{\ell-r^{(1)}-r^{(2)}})+(0^{r^{(2)}},y^{(2)})$ where $\langle x^{(2)} \rangle$ has a complement $C^{(2)}$ in $A^{(2)}$ and all the coefficients of $y^{(2)}$ are multiples of $p$.

We repeat the above argument $t$ times until we reach the point where in the decomposition we have $y^{(t)}=0$ (or, equivalently, in the decomposition of $A$ as a direct sum, the second term is trivial). Note that at each step we have forced that all the coefficients of the non-zero element $y^{(1)},\ldots,y^{(t-1)}$ are multiples of $p$. Since all these elements consist of $n$ components (being elements of the original group $A=\prod_{i=1}^\ell \mb{Z}_{p^{a_i}}$) we have  $t\le n$. 

Let $H:=\langle (x^{(1)},0^{\ell-r^{(1)}}),\ldots,(0^{r^{(1)}+\cdots+r^{(t-1)}},x^{(t)},0^{\ell-r-\cdots-r^{(t)}}) \rangle$, and note that $x\in H$ and $|H|\le p^{n^2}$. Moreover, this group has a complement in $A$. Namely, the subgroup
\[
C:=(\{0^{r^{(1)}+\cdots+r^{(t)}}\}\times B^{(t)})\oplus \bigoplus_{i=1}^t \left(\{0^{r^{(1)}+\cdots+r^{(i-1)}}\}\times C^{(i)}\times \{0^{\ell-r^{(1)-\cdots-r^{(i)}}}\}\right)
\]
can be checked to satisfy $C+H=A$ and $C\cap H=\{0\}$. The result follows.
\end{proof}

\begin{proof}[Proof of Lemma \ref{lem:larger-has-complement}]
We argue by induction on $r$. If $r=1$ then $H=\langle x\rangle$ for some $x$. We have that $x$ is contained in a subgroup $H'$ which has a complement by Lemma \ref{lem:enlarge-for-complement}. We have that $H\leq H'$ and that $|H'|\leq p^{n^2}$ and so the statement of the lemma holds. Let us assume that the statement holds for $r-1$ and that $H=\langle x_1,x_2,\dots,x_r\rangle$. Let $G=\langle x_1,x_2,\dots,x_{r-1}\rangle$. We have by induction that there is a subgroup $G'$ containing $G$ such that $G'$ has a complement $K$ and $|G'|\leq p^{n^2(r-1)}$. Let us write $x_r=k+g$ where $k\in K$ and $g\in G'$. By Lemma \ref{lem:enlarge-for-complement} we have that $k$ is contained in a subgroup $C$ of $K$ such that $C$ has a complement $K'$ inside $K$ and $|C|\leq p^{n^2}$. We have that $K'$ is a complement of $C+G'$ in $A$  and $|C+G'|\leq |C||G'|\le  p^{n^2r}$. On the other hand it is clear that $H\leq C+G'$.
\end{proof} 

\begin{proof}[Proof of Proposition \ref{prop:comp1}]
Let $t$ be the rank of $A/H$, and note that since $A/H$ is also an abelian $p$-group we have $t\leq\log_p(|A:H|)\le \log_p(r)$. We can choose a subgroup $T$ of $A$ generated by $t$ elements such that $H+T=A$. (Here $T$ can be chosen to be the subgroup generated by the preimages of a generating system of $A/H$ under the homomorphism $A\to A/H$.) Let $Q=H\cap T$. Note that $Q$ is generated by at most $t$ elements. Now by Lemma \ref{lem:larger-has-complement} we can embed $Q$ into some subgroup $Q'\leq H$ with $|Q'|\leq p^{tn^2}$ such that $Q'$ has a complement $K$ in $H$. Note that $K$ has the subgroup $Q'+T$ as a complement in $A$, indeed $K+Q'+T=H+T=A$ and by construction $K\cap (Q'+T)=K\cap Q'=\{0\}$. Thus $[A:K]=|Q'+T|\leq |Q'||T|\leq p^{tn^2+tn}\leq r^{n^2+n}$ and $K$ is a good choice for $H'$. 
\end{proof}

We are ready to prove the main result of this section.

\begin{proof}[Proof of Theorem \ref{thm:mainextthm}] By Proposition \ref{prop:comp1} the group $H\leq A$ has a subgroup $H'$ of bounded index such that $H'$ has a complement $K$ in $A$. In particular, for each coset $S$ of $H'$ in $H$ there is a nilspace morphism $\tau_S:\mc{D}_1(A)\to\mc{D}_1(S)$ such that $\tau_S$ is the identity map on $S$ ($\tau_S$ can be thought of as a projection $A\to S$). Indeed, every $x\in A$ can be uniquely written as $x=s+k$ where $s\in S,k\in K$, and we then define $\tau_S(x):=s$. Now we can define $\psi$ to be the map $x\mapsto (\phi\circ\tau_S(x)\big)_{\textrm{cosets $S$ of $H'$ in $H$}}$ which goes from $\mathcal{D}_1(A)$ to $\mathcal{D}_k(C)$ where $C$ is the direct power $B^{|H:H'|}$. Note that $\psi|_H$ clearly refines $\phi$, as every $x\in H$ is in some coset $H'$ and so $\phi(x)$ will be one of the components of $\psi(x)$.
\end{proof}

\begin{proof}[Proof of Theorem \ref{thm:mainU3}]
By Theorem \ref{thm:invreduc-intro} for $k=2$, we have that $f$ correlates with a function of the form $\prod_{p\in \mc{P}}F_p\co \phi_p(z_p)$ where $\ab\cong \prod_{p\in \mc{P}} \ab_p$ is the $p$-Sylow decomposition of $\ab$ (in particular $\mc{P}=\mc{P}(|\ab|)$), where each $\phi_p$ is a morphism from $\mc{D}_1(\ab_p)$ to some 2-step finite $p$-nilspace $\ns_p$.

Fix any prime $p\in\mc{P}$. Note that $\pi_1\co\phi_p:\mc{D}_1(\ab_p)\to \mc{D}_1(\ab_1(\ns_p))$ is an affine homomorphism where $\ab_1(\ns_p)$ is the first structure group of $\ns_p$. Let $H_p\le \ab_p$ be the kernel of $\pi_1\co\phi_p-\pi_1\co\phi_p(0)$. The index of $H_p\le \ab_p$ is bounded by the size of $\ab_1(\ns_p)$, which is in turn bounded in terms of $\delta$ and the torsion $m$ only.

Fix a complete set of coset representatives $\{z_i\in \ab_p:i\in I_p\}$ for $H_p$ in $\ab_p$, i.e.\ such that $\bigsqcup_{i\in I_p} z_i+H_p$ is a partition of $\ab_p$. Note that $|I_p|\le |\ab_p:H_p|=O_{\delta,m}(1)$. For each $i\in I_p$, the map $\phi_i^{(p)}=\phi_p|_{z_i+H}:\mc{D}_1(H_p)\to \ns_p$ defined as $x\mapsto \phi_p(x+z_i)$ takes values by definition in a single $\pi_1$ fiber. Thus $\phi_i^{(p)}$ is a morphism $\mc{D}_1(H_p)\to \mc{D}_2(\ab_2(\ns_p))$. Note that in particular $|\ab_2(\ns_p)|\le |\ns_p|=O_{\delta,m}(1)$. For each $i\in I_p$ we apply Theorem \ref{thm:mainextthm}, obtaining a function $\psi_i^{(p)}:\mc{D}_1(\ab_p)\to \mc{D}_2(C_i^{(p)})$ with $|C_i^{(p)}|=O_{\delta,m}(1)$ such that $\phi_i^{(p)}\lesssim \psi_i^{(p)}$.

Let $\psi_p:\mc{D}_1(\ab_p)\to\mc{D}_1(\ab_1(\ns_p))\times  \prod_{i\in I}\mc{D}_2(C_i^{(p)})$ be defined as $x\mapsto (\pi_1(\phi_p(x)), (\psi_i^{(p)}(x+z_i))_{i\in I_p})$. This map is a morphism and  $\phi_p\lesssim \psi_p$.

Having thus shown that the answer to Question \ref{mainQ:p-case} is affirmative in this case $k=2$, the result follows essentially by Fourier analysis, i.e.\ by Corollary \ref{cor:Q1.9reduc}. 
\end{proof}

\begin{remark}
One might have hoped to use, instead of Lemma \ref{lem:enlarge-for-complement}, a claim that if an element $x$ of a finite abelian $p$-group has no $p$-th root, then the group $\langle x\rangle$ has a complement (this would imply that every element $y$ in an abelian $p$-group is contained in some cyclic subgroup $\langle x\rangle$ that has a complement). However, this claim is false. Indeed, let $A:=\mb{Z}_3\times \mb{Z}_{27}$ and $x:=(1,3)\in A$. Then $x$ has no 3-rd root in $A$ and yet $\langle x\rangle$ has no complement (we omit the proof). Nevertheless, Lemma \ref{lem:enlarge-for-complement} gives us that $\langle x\rangle\le H=A$, which does have a complement (the trivial group $\{(0,0)\}$) and size $|H|=3^4\le 3^{3^2}$.
\end{remark}
\noindent Let us record the following straightforward consequence of Proposition \ref{prop:comp1}, which can be useful more generally in the $m$-torsion setting. 
\begin{corollary}\label{cor:mtorocomp}
For any $r,m\in \mb{N}$ there exists $c\in\mb{N}$ such that the following holds. For any $m$-torsion finite abelian group $A$, and any subgroup $H\leq A$ of index $r$, there is a subgroup $H'\leq H$ of index at most $c$ such that $H'$ has a complement in $A$.
\end{corollary}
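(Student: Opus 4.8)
The plan is to reduce Corollary \ref{cor:mtorocomp} to Proposition \ref{prop:comp1} by using the primary decomposition of the finite abelian group $A$. First I would write $m=\prod_{j=1}^s p_j^{e_j}$ for distinct primes $p_j$, so that every $m$-torsion finite abelian group $A$ splits as $A=\bigoplus_{j=1}^s A_j$ where $A_j$ is the $p_j$-primary component of $A$; note each $A_j$ is $p_j^{e_j}$-torsion. A subgroup $H\leq A$ also respects this decomposition: since the $A_j$ have coprime orders, one has $H=\bigoplus_{j=1}^s H_j$ with $H_j:=H\cap A_j$, and moreover $[A:H]=\prod_j [A_j:H_j]$, so in particular each index $r_j:=[A_j:H_j]$ divides $r$ and is therefore at most $r$.

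Next I would apply Proposition \ref{prop:comp1} separately in each primary component: for each $j$, since $A_j$ is $p_j^{e_j}$-torsion and $H_j\leq A_j$ has index $r_j\leq r$, there is a subgroup $H_j'\leq H_j$ of index at most $r_j^{e_j^2+e_j}\leq r^{e_j^2+e_j}$ such that $H_j'$ has a complement $K_j$ in $A_j$. Setting $H':=\bigoplus_{j=1}^s H_j'$ and $K:=\bigoplus_{j=1}^s K_j$, one checks directly that $H'\leq H$, that $K+H'=A$ and $K\cap H'=\{0\}$ (both facts hold componentwise and the components have trivial pairwise intersection), so $H'$ has a complement in $A$. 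Finally $[A:H']=\prod_{j=1}^s [A_j:H_j']\leq \prod_{j=1}^s r^{e_j^2+e_j}=r^{\sum_j (e_j^2+e_j)}=:c$, which depends only on $r$ and $m$, as required.

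There is essentially no hard step here; the only point that warrants a sentence of care is the claim that a subgroup $H$ of $A=\bigoplus_j A_j$ with the $A_j$ of pairwise coprime orders must itself split as $\bigoplus_j (H\cap A_j)$ — this is the standard fact that the primary decomposition of a finite abelian group is functorial, so that the primary component of $H$ for the prime $p_j$ is exactly $H\cap A_j$, and it follows at once because multiplication by $|A_j|$ (which annihilates $A_j$) is invertible on $\bigoplus_{j'\neq j} A_{j'}$. Given this, the index multiplicativity $[A:H]=\prod_j[A_j:H_j]$ and the complementation claims are immediate. So I expect the write-up to be short, with the verification of the direct-sum structure of $H$ being the only place needing a precise justification.
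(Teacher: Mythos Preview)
Your proposal is correct and follows exactly the approach the paper indicates (the paper merely says ``This is obtained using the primary decomposition of $A$ and applying Proposition \ref{prop:comp1} in each $p$-group component; we omit the details''). Your write-up supplies precisely those omitted details, including the one nontrivial point that a subgroup of $\bigoplus_j A_j$ with the $A_j$ of pairwise coprime orders splits as the direct sum of its intersections with the $A_j$.
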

\noindent This is obtained using the primary decomposition of $A$ and applying Proposition \ref{prop:comp1} in each $p$-group component; we omit the details.
\begin{remark}\label{rem:AltPfs}
Once a result such as Proposition \ref{prop:comp1} or Corollary \ref{cor:mtorocomp} is proved, there are in fact several ways to apply this tool to prove Theorem \ref{thm:mainU3}. In particular, after the first preprint version of this paper appeared online, it was observed by Jamneshan, Shalom and Tao (in personal communication) that one can also obtain Theorem \ref{thm:mainU3} by combining such a tool with \cite[Theorem 1.6]{J&T}. Indeed, in the $m$-torsion setting the Bohr set obtained in \cite[Theorem 1.6]{J&T} is essentially a bounded-index subgroup, inside which Corollary \ref{cor:mtorocomp} enables us to find a \emph{complemented} bounded-index subgroup, and from the latter subgroup it is then possible to extend the quadratic polynomial $\phi$ (obtained in \cite[Theorem 1.6]{J&T}) to the whole group, to obtain the desired correlating global quadratic phase function. This alternative proof yields an explicit bound for $\varepsilon$ in Theorem \ref{thm:mainU3}. 
\end{remark}

\section{The $U^{k+1}$ inverse theorem with projected phase polynomials}\label{sec:prophase}
\noindent In this section we first prove Theorem \ref{thm:invboundedtor}. We will use several ingredients from \cite{SzegFin}. The first one is  the following (see \cite[Lemma 2.5]{SzegFin}).
\begin{lemma}\label{lem:perio-1}
For any positive integers $k,\ell$ there exists  $\alpha>0$ such that the following holds. For any $m$-torsion abelian group $A$, any morphism $\phi:\mc{D}_\ell(\mb{Z})\to \mc{D}_k(A)$ is $m^{\alpha}$-periodic.
\end{lemma}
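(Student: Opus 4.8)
The plan is to reduce the statement to an elementary fact about binomial coefficients modulo $m$, so that $\alpha$ can be taken to depend only on $k$ (in particular the dependence on $\ell$ in the statement is vacuous). First I would check that any morphism $\phi\in\hom(\mc{D}_\ell(\mb{Z}),\mc{D}_k(A))$ is, in particular, a polynomial map of degree at most $k$ from $\mb{Z}$ to $A$, i.e.\ $\partial_{h_1}\cdots\partial_{h_{k+1}}\phi\equiv 0$. Indeed, every affine map $\db{n}\to\mb{Z}$ has degree at most $1\le\ell$, hence is a cube of $\mc{D}_\ell(\mb{Z})$; applying this with $n=k+1$ to the affine cube $v\mapsto x+\sum_i v\sbr{i}h_i$ and using that its image under $\phi$ must be a cube of $\mc{D}_k(A)$ — which forces the single top alternating sum $\sum_{v\in\db{k+1}}(-1)^{|v|}\phi(x+\sum_{i:v\sbr{i}=1}h_i)$ to vanish — gives exactly $\partial_{h_1}\cdots\partial_{h_{k+1}}\phi(x)=0$. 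Consequently $\phi$ has the Newton forward-difference expansion $\phi(x)=\sum_{i=0}^{k}\binom{x}{i}\,a_i$, where $a_i=(\partial_1^{\,i}\phi)(0)\in A$ (here $\partial_1$ is the forward difference by $1$, and $\binom{x}{i}\in\mb{Z}$ acts on $A$ via its $\mb{Z}$-module structure).

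Next, since $A$ is $m$-torsion, each term $\binom{x}{i}a_i$ depends only on $\binom{x}{i}\bmod m$. So it suffices to choose $\alpha$ such that, setting $T:=m^{\alpha}$, one has $m\mid\binom{x+T}{i}-\binom{x}{i}$ for every $x\in\mb{Z}$ and every $0\le i\le k$; this immediately yields $\binom{x+T}{i}a_i=\binom{x}{i}a_i$ for all $i$, hence $\phi(x+T)=\phi(x)$, which is the claimed $m^{\alpha}$-periodicity. By Vandermonde's identity $\binom{x+T}{i}=\sum_{j=0}^{i}\binom{T}{j}\binom{x}{i-j}$, so $\binom{x+T}{i}-\binom{x}{i}=\sum_{j=1}^{i}\binom{T}{j}\binom{x}{i-j}$, and it is therefore enough to arrange $m\mid\binom{T}{j}$ for all $1\le j\le k$.

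Finally I would run the $p$-adic valuation bookkeeping. For each prime $p\mid m$ and each $1\le j\le k$, the identity $j\binom{T}{j}=T\binom{T-1}{j-1}$ gives $v_p\binom{T}{j}\ge v_p(T)-v_p(j)=\alpha\,v_p(m)-v_p(j)\ge\alpha\,v_p(m)-\log_2 k$, which is $\ge v_p(m)$ as soon as $\alpha\ge 1+\log_2 k$ (using $v_p(m)\ge 1$). Hence $\alpha:=\lceil\log_2 k\rceil+1$ works simultaneously for all $i\le k$ and all primes dividing $m$. I do not expect a genuine obstacle here: the only mildly delicate point is the \emph{uniformity} of the period — one must pin down a single power of $m$ that is a period for every coefficient term at once — and this is exactly what the valuation estimate above delivers; the rest is a routine unwinding of the definition of morphisms between $\mc{D}$-nilspaces together with the standard finite-difference calculus.
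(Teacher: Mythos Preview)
Your proof is correct and follows essentially the same strategy as the paper's: write $\phi$ in its Newton forward-difference expansion $\phi(x)=\sum_{i\le k}\binom{x}{i}a_i$ and then verify by a $p$-adic valuation estimate that $m\mid\binom{x+m^\alpha}{i}-\binom{x}{i}$ for a suitable $\alpha$. The paper cites external lemmas to reach the expansion (obtaining the sharper degree bound $\lfloor k/\ell\rfloor$) and bounds $v_p(i!)$ via a Legendre-type estimate, whereas your direct cube argument and use of Vandermonde together with $j\binom{T}{j}=T\binom{T-1}{j-1}$ are more self-contained and actually yield the smaller exponent $\alpha=\lceil\log_2 k\rceil+1$ (versus the paper's $\lfloor k/\ell\rfloor+1$).
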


\begin{proof}
By \cite[Lemma A.2]{CGSS-doublecoset}, the morphism $\phi$ is in fact a morphism $\mc{D}_1(\mb{Z})\to \mc{D}_{\lfloor k/\ell\rfloor}(A)$, so we may assume that $\ell=1$. Now by \cite[Theorem A.6]{CGSS-doublecoset}, any morphism $\phi$ of the latter type has an expression of the form $\phi(x)=\sum_{i=1}^{\lfloor k/\ell\rfloor}a_i\binom{x}{i}$ for some $a_i\in A$. Let us prove now that $\binom{x+m^{\lfloor k/\ell\rfloor+1}}{i}-\binom{x}{i}$ is a multiple of $m$ for any $x\in \mb{Z}$ and $i\in[\lfloor k/\ell\rfloor]$. For any prime $p|m$ suppose that $m=p^cm'$ where $p$ and $m'$ are coprime. If we prove that $\binom{x+m^{\lfloor k/\ell\rfloor+1}}{i}-\binom{x}{i}$ is a multiple of $p^c$ then we are done (as we can then argue similarly for every prime dividing $m$).

Using the identity $\binom{x}{i}=\frac{x(x-1)\cdots(x-i+1)}{i}$ we have $\binom{x+m^{\lfloor k/\ell\rfloor+1}}{i}-\binom{x}{i}=\frac{m^{\lfloor k/\ell\rfloor+1}}{i!}Q(x,m,i)$ for some integer-valued polynomial $Q$. If we prove that $\frac{m^{\lfloor k/\ell\rfloor+1}}{i!}$ is always a multiple of $p^c$ then we will be done. Note that the largest power of $p$ dividing $m^{\lfloor k/\ell\rfloor+1}$ is precisely $c(\lfloor k/\ell\rfloor+1)$. On the other hand, in $i!$ we have at most $\sum_{j=1}^\infty \lfloor i/p^j\rfloor\le \sum_{j=1}^\infty i/p^j=\frac{i}{p-1}\le \frac{\lfloor k/\ell\rfloor}{p-1}$ factors of $p$. But as for any $c\in \mb{N}$ and $p$ prime we have that $\frac{\lfloor k/\ell\rfloor}{p-1}+c\le c(\lfloor k/\ell\rfloor+1)$ the result follows.
\end{proof}
\noindent Next, we extend Lemma \ref{lem:perio-1}, letting $\mc{D}_k(A)$  be replaced by finite nilspaces \cite[Lemma 2.6]{SzegFin}.
\begin{lemma}\label{lem:perio-2}
For any positive integers $k,\ell$ there exists $\beta>0$ such that the following holds. Let $m\in \mb{N}$, let $\ns$ be a $k$-step $m$-torsion finite nilspace, and let $\phi:\mc{D}_\ell(\mb{Z})\to \ns$ be a morphism. Then $\phi$ is $m^\beta$-periodic.
\end{lemma}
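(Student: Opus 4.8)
The plan is to induct on the step $k$ of $\ns$, with Lemma \ref{lem:perio-1} serving as the engine that drives the inductive step (it already handles the case $\ns=\mc{D}_k(A)$). For $k=0$ the nilspace is a point and there is nothing to prove. For $k\ge 1$ I would invoke the basic structure theory of nilspaces (see \cite[Definition 3.3.13]{Cand:Notes1} and \cite[Theorem 3.2.19]{Cand:Notes1}) to view $\ns$ as a degree-$k$ extension of its $(k-1)$-step factor $\ns_{k-1}$ by its $k$-th (top) structure group $\ab_k$. Thus the canonical projection $\pi_{k-1}:\ns\to\ns_{k-1}$ exhibits $\ns$ as a $k$-fold abelian bundle over $\ns_{k-1}$ with top structure group $\ab_k$; in particular the fibres of $\pi_{k-1}$ are $\ab_k$-torsors, and for each $n$ the induced map $\cu^n(\ns)\to\cu^n(\ns_{k-1})$ is a principal bundle whose fibre over any cube in its image is a torsor over $\cu^n(\mc{D}_k(\ab_k))$. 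Note that $\ns_{k-1}$ is again a finite $m$-torsion nilspace, now of step $k-1$, and that $\ab_k$ is a finite $m$-torsion abelian group.

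Given a morphism $\phi:\mc{D}_\ell(\mb{Z})\to\ns$, set $\bar\phi:=\pi_{k-1}\co\phi:\mc{D}_\ell(\mb{Z})\to\ns_{k-1}$; by the induction hypothesis $\bar\phi$ is $m^{\beta(k-1,\ell)}$-periodic. Let $\alpha=\alpha(k,\ell)$ be the constant furnished by Lemma \ref{lem:perio-1}, and put $\beta':=\max\big(\beta(k-1,\ell),\alpha\big)$ and $T:=m^{\beta'}$. Then $\bar\phi$ is $T$-periodic, since a map with period $m^a$ also has period $m^{a'}$ for every $a'\ge a$. As $\bar\phi(x+T)=\bar\phi(x)$ for all $x$, the points $\phi(x)$ and $\phi(x+T)$ lie in a common fibre of $\pi_{k-1}$, so we may form their difference $g(x):=\phi(x+T)-\phi(x)\in\ab_k$.

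The heart of the argument is the claim that $g$ is a morphism $\mc{D}_\ell(\mb{Z})\to\mc{D}_k(\ab_k)$. To prove it, take any $(k+1)$-cube $c$ of $\mc{D}_\ell(\mb{Z})$. The translate $c+T$ (obtained by adding the constant $T$ to every vertex of $c$) is again a $(k+1)$-cube of $\mc{D}_\ell(\mb{Z})$, by translation-invariance of the $\mc{D}_\ell$-cube structure; hence both $\phi\co c$ and $\phi\co(c+T)$ are $(k+1)$-cubes of $\ns$, and they have the same image in $\cu^{k+1}(\ns_{k-1})$, namely $\bar\phi\co c=\bar\phi\co(c+T)$ by $T$-periodicity of $\bar\phi$. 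Since $\cu^{k+1}(\ns)\to\cu^{k+1}(\ns_{k-1})$ is a principal bundle with structure group $\cu^{k+1}(\mc{D}_k(\ab_k))$, the difference $\phi\co(c+T)-\phi\co c$ lies in $\cu^{k+1}(\mc{D}_k(\ab_k))$, i.e.\ the map $v\mapsto g(c(v))$ has vanishing alternating sum. As $(k+1)$-cubes of $\mc{D}_k(\ab_k)$ are exactly the maps $\{0,1\}^{k+1}\to\ab_k$ with vanishing alternating sum, while cubes of dimension $\le k$ are unconstrained and cubes of dimension $>k+1$ reduce to their $(k+1)$-subfaces, this yields $g\in\hom(\mc{D}_\ell(\mb{Z}),\mc{D}_k(\ab_k))$. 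I expect this to be the main obstacle: it relies on correctly identifying the degree-$k$ extension structure of $\ns$ and on the elementary fact that two $(k+1)$-cubes lying over the same base cube differ by an element of the structure group, hence have equal top-level alternating sum; everything downstream of it is formal.

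With the claim in hand, Lemma \ref{lem:perio-1} applied to $g$ (with the same parameters $k,\ell$, and using that $\ab_k$ is $m$-torsion) shows that $g$ is $m^\alpha$-periodic, hence $T$-periodic. A telescoping identity then closes the argument. Since $\bar\phi(x+jT)=\bar\phi(x)$ for all $j$, all the points $\phi(x+jT)$ with $0\le j\le m$ lie in the fibre of $\pi_{k-1}$ over $\bar\phi(x)$, so
\[
\phi(x+mT)-\phi(x)=\sum_{j=0}^{m-1}\big(\phi(x+(j+1)T)-\phi(x+jT)\big)=\sum_{j=0}^{m-1}g(x+jT)=\sum_{j=0}^{m-1}g(x)=m\,g(x)=0,
\]
using $T$-periodicity of $g$ and that $g(x)\in\ab_k$. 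Since also $\bar\phi(x+mT)=\bar\phi(x)$ (because $mT$ is a multiple of $T$), we conclude $\phi(x+mT)=\phi(x)$, i.e.\ $\phi$ is $m^{\beta'+1}$-periodic. Taking $\beta(k,\ell):=\max\big(\alpha(k,\ell),\beta(k-1,\ell)\big)+1$, which depends only on $k$ and $\ell$, completes the induction. One bookkeeping point worth stressing is that it is legitimate to choose a single exponent $\beta'$ making both $\bar\phi$ and $g$ simultaneously $T$-periodic, precisely because periodicity with period $m^a$ is inherited by all larger powers of $m$.
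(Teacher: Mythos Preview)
Your proof is correct and follows essentially the same inductive strategy as the paper: induct on $k$, use the induction hypothesis to make $\pi_{k-1}\co\phi$ periodic, and then reduce to Lemma \ref{lem:perio-1} on the top structure group $\ab_k$. The only difference is in the execution of the inductive step: the paper restricts $\phi$ to each residue class $i+m^{\beta_{k-1}}\mb{Z}$ (so that the restriction lands in a single fibre and is directly a morphism into $\mc{D}_k(\ab_k)$), whereas you form the difference $g(x)=\phi(x+T)-\phi(x)$, verify it is a morphism, and then telescope using $m$-torsion; both arguments yield $\beta(k,\ell)=\beta(k-1,\ell)+\alpha(k,\ell)$ (up to bookkeeping), and the paper's variant simply sidesteps the cube-level verification that you flagged as the main obstacle.
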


\noindent Note that given any finite $k$-step nilspace $\ns$ we can always find some $m=m(|\ns|)$ such $\ns$ is $m$-torsion. Thus the result also holds for \emph{any} finite $k$-step nilspace $\ns$, the conclusion then being that $\phi$ is $m^{\beta}$-periodic with $m$ depending on $|\ns|$.

\begin{proof}
We argue by induction on $k$. For $k=1$ the result follows from Lemma \ref{lem:perio-1}. Then in particular, if $\pi_{k-1}:\ns\to \ns_{k-1}$ is the projection to the $k-1$ factor, we have that there exists $\beta_{k-1}$ such that $\pi_{k-1}\co \phi:\mc{D}_\ell(\mb{Z})\to \ns_{k-1}$ is $m^{\beta_{k-1}}$ periodic. In particular, for any fixed $i\in[m^{\beta_{k-1}}]$ we have that $\phi(i+xm^{\beta_{k-1}})$ (as a function of $x\in \mb{Z}$) is in $\hom(\mc{D}_\ell(\mb{Z}),\mc{D}_k(\ab_k(\ns)))$ where $\ab_k(\ns)$ is the $k$-th structure group of $\ns$. By Lemma \ref{lem:perio-1} we deduce that $\phi(i+xm^{\beta_{k-1}})$ is $m^{\alpha}$ periodic for some $\alpha=\alpha(k,\ell)$ and thus $\phi$ is $m^{\alpha+\beta_{k-1}}$ periodic.
\end{proof}
\noindent We quickly deduce the following corollary (see \cite[Theorem 7]{SzegFin}). In order to phrase it, following \cite[Definition 1.2]{CGSS-doublecoset} we recall that a \emph{discrete free nilspace} $F$ is a nilspace of the form $\prod_{i=1}^k \mc{D}_i(\mb{Z}^{a_i})$ for some $k\in\mb{N}$ and $a_i\in\mb{Z}_{\ge 0}$, $i\in[k]$. For any other nilspace $\ns$ and any morphism $\phi:F\to \ns$ we will say that $\phi$ is $r$-periodic  if it factorizes through the map $p:F=\prod_{i=1}^k \mc{D}_i(\mb{Z}^{a_i})\to \prod_{i=1}^k \mc{D}_i(\mb{Z}^{a_i}_r)$ defined as $(x_{i,j})_{i\in[k],j\in [a_i]}\in F\mapsto (x_{i,j}\mod r)_{i\in[k],j\in [a_i]}\in \prod_{i=1}^k \mc{D}_i(\mb{Z}^{a_i}_r)$. That is, there exists a morphism $\tilde{\phi}:\prod_{i=1}^k \mc{D}_i(\mb{Z}^{a_i}_r)\to \ns$ such that $\phi = \tilde{\phi}\co p$.

\begin{corollary}\label{cor:periodicfactor}
For any $k\in\mb{N}$ there exists $\gamma>0$ such that the following holds. Let $m$ be a positive integer, let $\ns$ be a finite $k$-step $m$-torsion nilspace, and let $F$ be a $k$-step discrete free nilspace. Let $\phi:F\to \ns$ be a morphism. Then $\phi$ is $m^\gamma$-periodic. 
\end{corollary}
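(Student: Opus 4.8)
The plan is to reduce the statement about morphisms from a general $k$-step discrete free nilspace $F=\prod_{i=1}^k\mc{D}_i(\mb{Z}^{a_i})$ to the one-variable case already handled by Lemma \ref{lem:perio-2}. The key observation is that $r$-periodicity of $\phi:F\to\ns$ is a statement about each of the finitely many coordinate ``lines'' of $F$ separately, but one must be careful that fixing all coordinates but one and restricting $\phi$ gives a genuine nilspace morphism from a one-dimensional discrete free nilspace.

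First I would set $N:=\sum_{i=1}^k a_i$ and enumerate the coordinates of $F$ as pairs $(i,j)$ with $i\in[k]$, $j\in[a_i]$. For a fixed coordinate $(i,j)$ and a fixed choice $\mf{a}$ of values for all the other coordinates, the map $t\mapsto \phi\big(\mf{a} \text{ with the }(i,j)\text{-coordinate set to }t\big)$ is a morphism from $\mc{D}_i(\mb{Z})$ to $\ns$ (this is because the inclusion $\mc{D}_i(\mb{Z})\hookrightarrow F$ sending $t$ to that point is a nilspace morphism, being a coordinate inclusion into a product of $\mc{D}_{i'}(\mb{Z}^{a_{i'}})$ factors, and morphisms compose). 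Applying Lemma \ref{lem:perio-2} with $\ell=i$ we obtain a constant $\beta=\beta(k,i)$, depending only on $k$ (take the max over $i\in[k]$, call it $\beta_0$), such that this restricted morphism is $m^{\beta_0}$-periodic in $t$, \emph{for every} fixed $\mf{a}$. Thus $\phi$ is invariant under adding $m^{\beta_0}$ to the $(i,j)$-coordinate, for each $(i,j)$.

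Next I would iterate over all $N$ coordinates: since $\phi$ is invariant under adding $m^{\beta_0}$ in each coordinate direction separately, it is invariant under the subgroup generated by all these translations, i.e.\ $\phi$ factors through the reduction map $F\to\prod_{i=1}^k\mc{D}_i(\mb{Z}_{m^{\beta_0}}^{a_i})$. The only subtlety is to produce the actual morphism $\tilde\phi$ on the quotient: one defines $\tilde\phi$ on $\prod_i\mc{D}_i(\mb{Z}_{m^{\beta_0}}^{a_i})$ by choosing any lift of each point to $F$ and applying $\phi$; well-definedness is exactly the periodicity just established, and $\tilde\phi$ is a morphism because the reduction map $p:F\to\prod_i\mc{D}_i(\mb{Z}_{m^{\beta_0}}^{a_i})$ is a surjective fibration and $\phi=\tilde\phi\co p$, so morphism-cubes push forward correctly (this is the standard fact that a map out of a quotient nilspace is a morphism iff its composition with the quotient fibration is). Setting $\gamma:=\beta_0=\beta_0(k)$ completes the proof.

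The main obstacle, such as it is, is the bookkeeping in the first step: verifying cleanly that restricting a morphism out of $F$ to a single coordinate line yields a morphism out of $\mc{D}_i(\mb{Z})$ of the \emph{correct degree} $i$ (not $\mc{D}_1(\mb{Z})$), so that Lemma \ref{lem:perio-2} applies with a period bound uniform in the fixed coordinates and depending only on $k$. This is where \cite[Lemma A.2]{CGSS-doublecoset} and the structure of $\mc{D}_i(\mb{Z})$ as a discrete free nilspace are used; once that is in place the rest is a routine ``periodic in each variable separately implies periodic'' argument together with the universal property of nilspace quotients.
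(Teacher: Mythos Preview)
Your proposal is correct and follows essentially the same approach as the paper's proof: fix all coordinates but one, observe that the resulting map $r\mapsto\phi(x+re_{i,j})$ lies in $\hom(\mc{D}_i(\mb{Z}),\ns)$, apply Lemma \ref{lem:perio-2} to get $m^{\beta(k,i)}$-periodicity in each coordinate direction, and take $\gamma:=\max_{i\in[k]}\beta(k,i)$. The paper's proof is terser (it does not spell out the well-definedness and morphism property of $\tilde\phi$ on the quotient, nor does it invoke \cite[Lemma A.2]{CGSS-doublecoset} explicitly at this stage), but the argument is the same.
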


\begin{proof}
Let $F=\prod_{i=1}^k\mc{D}_i(\mb{Z}^{a_i})$ for some integers $a_i\in \mb{Z}_{\ge0}$, $i\in[k]$. For $i\in[k]$ and $j\in[a_i]$ let $e_{i,j}\in F$ be the element with all coordinates equal to 0 except for the $j$th coordinate of $\mc{D}_i(\mb{Z}^{a_i})$ where it equals 1. For any $x\in F$ and $r\in \mb{Z}$ note that the map $\phi(x+re_{i,j})$ (as a function of $r\in \mb{Z}$) is in $\hom(\mc{D}_i(\mb{Z}),\ns)$. By Lemma \ref{lem:perio-2} there exists $\beta=\beta(k,i)$ such that $\phi(x+re_{i,j})$ is $m^\beta$ periodic. Thus, if we let $\gamma(k):=\max_{i\in[k]}(\beta(k,i))$ we deduce that $\phi$ is $m^\gamma$ periodic. In particular, $\phi$ factorizes through $p$.
\end{proof}
\noindent Combining this with a central  result from \cite{CGSS-doublecoset}, we obtain a useful fact on finite nilspaces.
\begin{corollary}\label{cor:fin-ns-quot-modN}
Let $m\ge 1$ be an integer. Let $\ns$ be a finite $k$-step nilspace such that all its structure groups have torsion $m$. Then there exists $\gamma=\gamma(k)\in\mb{N}$ and  integers $a_i\in \mb{Z}_{\ge 0}$, $i\in[k]$, such that for the group nilspace $\nss=\prod_{i=1}^k\mc{D}_i(\mb{Z}_{m^\gamma}^{a_i})$ there is a fibration $\varphi:\nss \to \ns$.
\end{corollary}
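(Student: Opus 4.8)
The plan is to deduce Corollary \ref{cor:fin-ns-quot-modN} by combining Corollary \ref{cor:periodicfactor} with the structure theory of finite nilspaces, specifically the existence of a ``free resolution'' of a finite nilspace by a discrete free nilspace. Concretely, I would invoke the central result of \cite{CGSS-doublecoset} (a characterization of finite nilspaces as quotients of discrete free nilspaces): there is a $k$-step discrete free nilspace $F=\prod_{i=1}^k\mc{D}_i(\mb{Z}^{a_i})$ and a fibration $\phi:F\to\ns$. This is the input I would take as given; everything else is bookkeeping on periodicity.

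The key steps, in order: (1) Obtain the fibration $\phi:F\to\ns$ from $F$ a discrete free nilspace, using the quotient description of finite nilspaces from \cite{CGSS-doublecoset}. (2) Since every structure group of $\ns$ has torsion $m$, apply Corollary \ref{cor:periodicfactor} to conclude that $\phi$ is $m^\gamma$-periodic for $\gamma=\gamma(k)$, i.e.\ $\phi$ factors as $\phi=\tilde\phi\co p$ where $p:F\to\nss:=\prod_{i=1}^k\mc{D}_i(\mb{Z}_{m^\gamma}^{a_i})$ is the coordinatewise reduction mod $m^\gamma$ and $\tilde\phi:\nss\to\ns$ is a morphism. (3) Check that $\tilde\phi$ is itself a fibration. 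This is the one point requiring a small argument: the reduction map $p:F\to\nss$ is a surjective fibration (it is the quotient by the sub-nilspace structure given by $m^\gamma F$ in each coordinate, and coordinatewise reductions of the $\mc{D}_i$ are fibrations), and $\phi=\tilde\phi\co p$ is a fibration; one then uses the fact that if a composition $g\co h$ of morphisms is a fibration and $h$ is a surjective fibration, then $g$ is a fibration — this follows directly from the lifting/extension characterization of fibrations, since any cube of $\ns$ that one wishes to lift along $\tilde\phi$ can first be lifted along $\phi$ to a cube of $F$ and then pushed forward by $p$. Surjectivity of $\tilde\phi$ is immediate from surjectivity of $p$ and $\phi$.

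I expect step (3) — verifying that the factored map $\tilde\phi:\nss\to\ns$ inherits the fibration property — to be the main (though minor) obstacle, simply because it requires being careful about which maps in the commutative triangle $F\xrightarrow{p}\nss\xrightarrow{\tilde\phi}\ns$ are known to be fibrations and invoking the correct cancellation lemma for fibrations (see \cite[Chapter 3]{Cand:Notes1}); the periodicity part is already packaged cleanly in Corollary \ref{cor:periodicfactor}, and the existence of the discrete free resolution is exactly the content imported from \cite{CGSS-doublecoset}. Once these three ingredients are in place the statement follows immediately, with $\gamma$ being the constant $\gamma(k)$ from Corollary \ref{cor:periodicfactor} and the $a_i$ being the ranks of the free nilspace $F$.
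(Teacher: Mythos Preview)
Your proposal is correct and follows essentially the same route as the paper: invoke the free-resolution result from \cite{CGSS-doublecoset} to get a fibration $\phi:F\to\ns$ from a discrete free nilspace, apply Corollary \ref{cor:periodicfactor} to factor through the reduction mod $m^\gamma$, and take the induced map $\tilde\phi:\nss\to\ns$. The paper's proof is slightly terser (it simply asserts the factored map is a fibration), whereas you spell out the cancellation argument in step (3); one minor point the paper mentions that you skip is that the cited result in \cite{CGSS-doublecoset} a priori yields a \emph{free} nilspace (possibly with continuous components), and one uses continuity of the fibration together with finiteness of $\ns$ to reduce to the discrete case before invoking periodicity.
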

\begin{proof}
We apply \cite[Theorem 4.4]{CGSS-doublecoset}, obtaining a $k$-step free nilspace $F$ and a continuous fibration $\varphi':F\to \ns$. The continuity of $\varphi'$ implies (factoring through possible continuous components of $F$ if necessary) that we may assume without loss of generality that $F$ is a \emph{discrete} free nilspace, thus $F=\prod_{i=1}^k \mc{D}_i(\mb{Z}^{a_i})$ for some $a_i\in\mb{Z}_{\geq 0}$. We now apply Corollary \ref{cor:periodicfactor}, letting $\gamma$ be the resulting integer, obtaining the claimed fibration $\varphi:\nss\to\ns$ as the map with $\varphi'=\varphi\co p$.
\end{proof}
\noindent We can give a quick proof of the inverse theorem with projected phase polynomials.
\begin{proof}[Proof of Theorem \ref{thm:invboundedtor}]
We apply Theorem \ref{thm:nil-for-m-exp} and let $\ns$ be the resulting nilspace of torsion $m$, $\phi$ the resulting morphism $\mc{D}_1(\ab)\to\ns$, and $F:\ns\to\mb{C}$ the resulting 1-bounded function such that $\mb{E}_{x\in Z} f(x) F(\phi(x))\geq \delta^{2^{k+1}}/2$. By Corollary \ref{cor:fin-ns-quot-modN} there is a fibration $\varphi:\nss\to \ns$. By standard group theory there is a surjective group homomorphism $\tilde\tau:\mb{Z}^r\to \ab$, where $r$ is the rank of $\ab$. In particular $\phi\co\tilde\tau$ is a nilspace morphism $\mc{D}_1(\mb{Z}^r)\to \ns$. By \cite[Corollary A.6]{CGSS-p-hom}, there is a morphism $g:\mc{D}_1(\mb{Z}^r)\to \nss$ such that $\varphi\co g=\phi\co\tilde\tau$. By a very special case of Corollary \ref{cor:periodicfactor} (applied with $\ns$ equal to the nilspace $\nss$ here, and $F$ there equal to $\mc{D}_1(\mb{Z}^r)$) we have that $g$ is actually $m^\gamma$-periodic, so we can replace $\mb{Z}^r$ by an $m$-torsion abelian group $B=\mb{Z}_{m^\gamma}^r$, replace $g$ by a morphism $\psi:\mc{D}_1(B)\to\nss$, and $\tilde\tau$ by a surjective homomorphism $\tau:B\to \ab$. The situation can be summarized with the following commutative diagram:
\begin{equation}\label{diag:proj-phases}
\begin{tikzpicture}
  \matrix (m) [matrix of math nodes,row sep=2em,column sep=3em,minimum width=2em]
  { \mc{D}_1(\mb{Z}^r)    &    &  \\
     & \mc{D}_1(B) & \nss   \\
     & \ab & \ns \\};
  \path[-stealth]
(m-2-3) edge node [right] {$\varphi$} (m-3-3)
(m-1-1) edge node [above] {$g$} (m-2-3)
(m-1-1) edge node [below] {$\tilde\tau$} (m-3-2)
(m-1-1) edge node [below] {$p$} (m-2-2)
(m-3-2) edge node [above] {$\phi$} (m-3-3)
(m-2-2) edge node [right] {$\tau$} (m-3-2)
(m-2-2) edge node [below] {$\psi$} (m-2-3);
\end{tikzpicture}
\end{equation}
Let $h$ denote the function $F\co \varphi:\nss\to\mb{C}$. Then $\mb{E}_{x\in Z} f(x) F(\phi(x))= \mb{E}_{y\in B} f\co\tau(y) h\co\psi(y)$. By the Fourier decomposition of $h$ on the finite abelian group $\nss$, and the pigeonhole principle, there is a character $\chi\in \wh{B}$ such that $\varepsilon\leq \mb{E}_{y\in B} f(\tau(y)) \chi(\psi(y)) = \mb{E}_{x\in Z} f(x) \mb{E}_{y\in \tau^{-1}(x)}\chi(\psi(y))$, which proves the result with $\phi:=\chi\co \psi$.
\end{proof}
\noindent The rest of this section treats the two aspects of Theorem \ref{thm:invboundedtor} mentioned in the introduction.

Recall that the first aspect is that the projected phase polynomials are ensured to have degree at most $k$ (in comparison with inverse theorems for $\|\cdot\|_{U^{k+1}}$ in which the degree of the polynomial is only bounded in terms of $k$ but may be \emph{larger} than $k$). Among other things, this implies that this inverse theorem is ``tight" in the sense that projected phase polynomials of degree $k$ are indeed obstructions to degree-$k$ uniformity (unlike phase functions of degree only guaranteed to be bounded in terms of $k$). More precisely, we have the following result.
\begin{proposition}\label{prop:propolyobstruct}
Let $\phi_{*\tau}$ be a projected phase polynomial of degree $k$ on a finite abelian group $\ab$, and suppose that $f:\ab\to\mb{C}$ satisfies $|\langle f,\phi_{*\tau}\rangle|\geq \delta$. Then $\|f\|_{U^{k+1}}\geq \delta$.
\end{proposition}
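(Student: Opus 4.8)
The plan is to pull everything back along the surjective homomorphism $\tau$ and thereby reduce the statement to the classical fact that a $1$-bounded function correlating with a \emph{genuine} phase polynomial of degree $k$ has large $U^{k+1}$-norm.

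First I would rewrite the inner product $\langle f,\phi_{*\tau}\rangle$. Writing $\phi:\ab'\to\mb{C}$ for the phase polynomial of degree at most $k$ and $\tau:\ab'\to\ab$ for the surjective homomorphism from Definition \ref{def:propolyphase}, note that every fibre $\tau^{-1}(x)$ has the same cardinality $|\ker\tau|$, so averaging first over $\ab$ and then over a fibre of $\tau$ coincides with averaging over $\ab'$. Hence
\[
\langle f,\phi_{*\tau}\rangle=\mb{E}_{x\in\ab}f(x)\,\overline{\mb{E}_{y\in\tau^{-1}(x)}\phi(y)}=\mb{E}_{y\in\ab'}f(\tau(y))\,\overline{\phi(y)}=\langle f\co\tau,\phi\rangle,
\]
so that $|\langle f\co\tau,\phi\rangle|\ge\delta$, now with a genuine degree-$k$ phase polynomial on $\ab'$.

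Next I would invoke two standard properties of Gowers norms. The first is pullback invariance under surjective homomorphisms: since $\tau$ is a surjective homomorphism, the tuple $(\tau(x),\tau(h_1),\ldots,\tau(h_{k+1}))$ is uniformly distributed over $\ab^{k+2}$ as $(x,h_1,\ldots,h_{k+1})$ ranges uniformly over $(\ab')^{k+2}$, and the multilinear average defining $\|\cdot\|_{U^{k+1}}$ depends only on such additive combinations; hence $\|f\co\tau\|_{U^{k+1}(\ab')}=\|f\|_{U^{k+1}(\ab)}$. The second is invariance of $\|\cdot\|_{U^{k+1}}$ under multiplication by a phase polynomial of degree at most $k$: writing $\phi=e(P)$ with $\deg P\le k$, in the Gowers product defining $\|(f\co\tau)\overline{\phi}\|_{U^{k+1}(\ab')}$ the factor contributed by $\overline{\phi}$ equals $e\!\big(-\partial_{h_1}\cdots\partial_{h_{k+1}}P(x)\big)=1$, so $\|(f\co\tau)\overline{\phi}\|_{U^{k+1}(\ab')}=\|f\co\tau\|_{U^{k+1}(\ab')}$.

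Finally I would combine these with the monotonicity $\|h\|_{U^{k+1}}\ge\|h\|_{U^1}=|\mb{E}\,h|$, applied to $h=(f\co\tau)\overline{\phi}$, to conclude
\[
\|f\|_{U^{k+1}(\ab)}=\|f\co\tau\|_{U^{k+1}(\ab')}=\|(f\co\tau)\overline{\phi}\|_{U^{k+1}(\ab')}\ge\big|\mb{E}_{y\in\ab'}f(\tau(y))\overline{\phi(y)}\big|=|\langle f\co\tau,\phi\rangle|\ge\delta.
\]
No step here presents a genuine obstacle; the only points requiring a little care are the bookkeeping with normalizations when passing from $\langle f,\phi_{*\tau}\rangle$ to $\langle f\co\tau,\phi\rangle$ via equality of fibre sizes, and verifying that the $\overline{\phi}$-factor in the Gowers product collapses to $1$ precisely because $P$ has degree at most $k$.
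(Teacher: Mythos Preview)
Your proof is correct and uses essentially the same ingredients as the paper: the identity $\langle f,\phi_{*\tau}\rangle=\langle f\co\tau,\phi\rangle$, the pullback invariance $\|f\co\tau\|_{U^{k+1}(\ab')}=\|f\|_{U^{k+1}(\ab)}$, the invariance of $\|\cdot\|_{U^{k+1}}$ under multiplication by a degree-$k$ phase polynomial, and the monotonicity $\|h\|_{U^{k+1}}\ge|\mb{E}\,h|$. The only packaging difference is that the paper isolates the key content as a separate lemma stating $\|\phi_{*\tau}\|_{U^{k+1}}^*\le 1$ and then concludes via the dual-norm inequality $|\langle f,\phi_{*\tau}\rangle|\le\|f\|_{U^{k+1}}\|\phi_{*\tau}\|_{U^{k+1}}^*$, whereas you chain the same inequalities directly for the given $f$.
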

This follows quickly from the following fact of independent interest.
\begin{lemma}\label{lem:dualnormbound}
Let $\phi_{*\tau}$ be a projected phase polynomial of degree $k$ on a finite abelian group. Then $\|\phi_{*\tau}\|_{U^{k+1}}^* \le  1$ where $\|\cdot \|_{U^{k+1}}^*$ is the $U^{k+1}$-dual-norm.
\end{lemma}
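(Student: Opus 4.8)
The plan is to unwind the definition of the dual norm and reduce the claim to the estimate $|\langle f,\phi_{*\tau}\rangle_{\ab}|\le \|f\|_{U^{k+1}(\ab)}$ for every $f:\ab\to\mb{C}$, and then to prove the latter by transferring the pairing to the larger group $\ab'$ on which a \emph{genuine} phase polynomial lives. Write $\phi_{*\tau}$ as in Definition~\ref{def:propolyphase}, via a surjective homomorphism $\tau:\ab'\to\ab$ and a degree-$\le k$ phase polynomial $\phi=e(P)$ on $\ab'$. Since $\tau$ is a surjective homomorphism, all its fibres have the same cardinality $|\ab'|/|\ab|$, so for any function on $\ab'$ the average over $\ab'$ equals the average over $\ab$ of the fibrewise averages. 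Applying this to $y\mapsto f(\tau(y))\overline{\phi(y)}$ and using $\overline{\phi_{*\tau}(x)}=\mb{E}_{y\in\tau^{-1}(x)}\overline{\phi(y)}$ gives the key identity
\[
\langle f\co\tau,\,\phi\rangle_{\ab'}\;=\;\mb{E}_{x\in\ab}f(x)\,\mb{E}_{y\in\tau^{-1}(x)}\overline{\phi(y)}\;=\;\langle f,\,\phi_{*\tau}\rangle_{\ab}.
\]
This identity is the conceptual heart of the argument: it turns the ``projected'' object into an honest phase polynomial at the cost of passing to $\ab'$.

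Next I would invoke two standard facts about Gowers norms, both applied on $\ab'$. The first is invariance under pullback by a surjective homomorphism: $\|f\co\tau\|_{U^{k+1}(\ab')}=\|f\|_{U^{k+1}(\ab)}$. This follows by expanding $\|\cdot\|_{U^{k+1}}^{2^{k+1}}$ as the average of a product of (conjugated) values over combinatorial cubes $x+\sum_{i}\omega\sbr{i}h_i$, $\omega\in\db{k+1}$; because $\tau$ commutes with these cube maps and pushes the uniform distribution on $(\ab')^{k+2}$ to the uniform distribution on $\ab^{k+2}$, the two averages coincide. The second fact is that a degree-$\le k$ phase polynomial has $U^{k+1}$-dual norm at most $1$ on any finite abelian group; I would prove it directly here by applying the monotonicity $|\mb{E}(h)|=\|h\|_{U^1}\le\|h\|_{U^{k+1}}$ to $h:=(f\co\tau)\,\overline{\phi}$, and observing that in the cube average defining $\|h\|_{U^{k+1}}^{2^{k+1}}$ the contribution of $\overline\phi$ factors out as $e\big(\pm\partial_{h_1}\cdots\partial_{h_{k+1}}P(x)\big)=1$ because $\deg P\le k$; hence $\|(f\co\tau)\overline\phi\|_{U^{k+1}(\ab')}=\|f\co\tau\|_{U^{k+1}(\ab')}$, and therefore $|\langle f\co\tau,\phi\rangle_{\ab'}|\le \|f\co\tau\|_{U^{k+1}(\ab')}$.

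Chaining the displayed identity with these two facts yields $|\langle f,\phi_{*\tau}\rangle_{\ab}|=|\langle f\co\tau,\phi\rangle_{\ab'}|\le\|f\co\tau\|_{U^{k+1}(\ab')}=\|f\|_{U^{k+1}(\ab)}$, which is exactly $\|\phi_{*\tau}\|_{U^{k+1}}^*\le1$. I do not expect a genuine obstacle in this argument; the only mild care needed is in the bookkeeping of the conjugation/sign conventions in the cube computation that shows a degree-$\le k$ phase contributes trivially, and in making explicit the equinumerosity of the fibres of $\tau$ so that the normalised averages on $\ab$ and $\ab'$ match. I expect the cleanest write-up is to isolate ``pullback invariance'' and ``$\|\phi\|^*_{U^{k+1}}\le1$ for genuine degree-$k$ phases'' (the latter classical, e.g.\ via the generalized von Neumann inequality) as short standalone steps and then combine them through the identity above.
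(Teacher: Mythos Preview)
Your proposal is correct and follows essentially the same approach as the paper: transfer the pairing to $\ab'$ via the identity $\langle f,\phi_{*\tau}\rangle_{\ab}=\langle f\co\tau,\phi\rangle_{\ab'}$, use pullback invariance $\|f\co\tau\|_{U^{k+1}(\ab')}=\|f\|_{U^{k+1}(\ab)}$, and conclude via $\|\phi\|_{U^{k+1}}^*\le 1$ for a genuine degree-$k$ phase polynomial (established exactly as you do, by monotonicity $\|h\|_{U^1}\le\|h\|_{U^{k+1}}$ applied to $h=(f\co\tau)\overline{\phi}$). The paper's write-up is terser and cites \cite[(2.1)]{GT08} for the last step, but the argument is the same.
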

\begin{proof}
Recall the definition $\|\phi_{*\tau}\|_{U^{k+1}}^*=\sup_{g:\ab\to\mb{C}:\|g\|_{U^{k+1}}\leq 1}|\langle \phi_{*\tau},g\rangle|$. Denoting by $\ab'$ the (abelian group) domain of $\tau$, the map $\tau^{\db{k+1}}:\cu^{k+1}(\ab')\to \cu^{k+1}(\ab)$ defined by $\tau^{\db{k+1}}(\q):v\mapsto \tau(\q(v))$ is a surjective homomorphism. It follows that for every map $g:\ab\to\mb{C}$ we have $\| g\co \tau \|_{U^{k+1}(\ab')} = \|g\|_{U^{k+1}(\ab)}$. Then we have $|\langle \phi_{*\tau}, g\rangle_{\ab} |= $ $| \mb{E}_{x\in \ab} \mb{E}_{y\in \tau^{-1}(x)} g\co\tau(y) \phi(y) | = | \langle g\co\tau, \phi\rangle_{\ab'}|\leq \|g\co\tau\|_{U^{k+1}(\ab')} \|\phi\|_{U^{k+1}(\ab')}^* = \|g\|_{U^{k+1}(\ab)} \|\phi\|_{U^{k+1}(\ab')}^*$. Therefore $\|\phi_{*\tau}\|_{U^{k+1}(\ab)}^*\leq \|\phi\|_{U^{k+1}(\ab)}^*$. Since $\phi$ is a phase polynomial of degree $k$, we have that $|\langle \phi,g\rangle|=\|\phi \overline{g}\|_{U^1}\le \|\phi \overline{g}\|_{U^{k+1}}= \| g\|_{U^{k+1}}$, see \cite[(2.1)]{GT08}. Thus $\|\phi\|_{U^{k+1}(\ab')}^*\le 1$.
\end{proof}
\begin{proof}[Proof of Proposition \ref{prop:propolyobstruct}]
By Lemma \ref{lem:dualnormbound}, $\delta\leq |\langle f, \phi_{*\tau}\rangle |\leq \|f\|_{U^{k+1}} \|\phi_{*\tau}\|_{U^{k+1}}^*\leq \|f\|_{U^{k+1}} $. 
\end{proof}
\noindent The second aspect was that Theorem \ref{thm:invboundedtor}, in addition to being tight in the sense explained above, is also stronger than inverse theorems in which the correlating harmonic is a phase polynomial of degree $C(m,k)$. We shall prove this in the following subsection.
\subsection{On a result of Jamneshan, Shalom and Tao}\hfill\\
The idea is to prove that the projected phase polynomials appearing in Theorem \ref{thm:invboundedtor} can be written as averages of polynomials of possibly larger degree. We shall prove this below and then apply it to give an alternative proof of \cite[Theorem 1.12]{JST-tot-dis}, which we recall here for convenience.
\begin{theorem}\label{thm:JST-tot-dis}
Let $m,k$ be positive integers and $\delta>0$. Then there exist $\varepsilon=\varepsilon(m,k,\delta)>0$ and $C=C(k,m)>0$ such that the following holds. For any finite abelian $m$-torsion group $\ab$ and any 1-bounded function $f:\ab\to \mb{C}$ with $\|f\|_{U^{k+1}}\ge \delta$ there exists a polynomial map $P:\ab\to \mb{R}/\mb{Z}$ of degree at most $C$ such that $|\langle f,e(-P)\rangle|\ge \varepsilon$.
\end{theorem}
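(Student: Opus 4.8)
The plan is to derive Theorem \ref{thm:JST-tot-dis} from Theorem \ref{thm:invboundedtor} by showing that a projected phase polynomial of degree $k$ correlating with $f$ can itself be replaced, up to a bounded loss in the correlation constant, by a \emph{genuine} phase polynomial of some degree $C=C(k,m)$. First I would invoke Theorem \ref{thm:invboundedtor}: there exist $\gamma=\gamma(k)$ and $\varepsilon_0=\varepsilon_0(\delta,k,m)>0$, a surjective homomorphism $\tau:\ab'\to\ab$ with $\ab'$ of torsion $m^\gamma$ and $\mathrm{rk}(\ab')=\mathrm{rk}(\ab)$, and a phase polynomial $\phi:\ab'\to\mb{C}$ of degree at most $k$, such that $|\mb{E}_{x\in\ab} f(x)\overline{\phi_{*\tau}(x)}|\ge\varepsilon_0$, where $\phi_{*\tau}(x)=\mb{E}_{y\in\tau^{-1}(x)}\phi(y)$. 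Since $\mathrm{rk}(\ab')=\mathrm{rk}(\ab)$ and $\ab'$ is $m^\gamma$-torsion, we can factor $\ab\cong\ab'/K$ where $K=\ker\tau$ is generated by $\mathrm{rk}(\ab)$ elements, each of order dividing $m^\gamma$; hence $|K|$ divides $m^{\gamma\,\mathrm{rk}(\ab)}$... but that is not bounded. The key realization is that we do not need $|K|$ bounded; rather, we write $\phi_{*\tau}$ as an average over translates.

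The second step is the main point: expressing $\phi_{*\tau}$ as an average of honest phase polynomials. Pick any set-theoretic section $s:\ab\to\ab'$ of $\tau$ (so $\tau\co s=\id$), which can be taken to be a group homomorphism if $K$ is complemented in $\ab'$; in general $s$ need not be a homomorphism, but $\tau^{-1}(x)=s(x)+K$ for every $x$, so
\begin{equation}\label{eq:avgtransl}
\phi_{*\tau}(x)=\mb{E}_{w\in K}\,\phi(s(x)+w).
\end{equation}
For each fixed $w\in K$, the map $x\mapsto \phi(s(x)+w)$ need not be a phase polynomial on $\ab$ because $s$ is not a homomorphism; to fix this, instead choose $s$ to be a \emph{homomorphism}. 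This is possible after passing to a bounded-index subgroup: by Corollary \ref{cor:mtorocomp} applied inside $\ab'$ there is a subgroup $L\le K$... no — what we actually need is a complement for $K$ itself. So the cleaner route is: by Corollary \ref{cor:mtorocomp} (or Proposition \ref{prop:comp1} component-wise), applied to the pair $K\le\ab'$, after replacing $\ab'$ by a bounded-index subgroup $\ab''$ and $\ab$ by the bounded-index image $\tau(\ab'')$ we may assume $K$ has a complement $G\le\ab'$, so $\ab'=G\oplus K$ and $\tau|_G:G\to\ab$ is an isomorphism; write $s:=(\tau|_G)^{-1}$, a homomorphism. Then in \eqref{eq:avgtransl} each summand $x\mapsto\phi(s(x)+w)=(\partial_{w}\text{-shift of }\phi)\co s(x)$ is a phase polynomial on $\ab$ of degree at most $k$ (precomposition of a degree-$\le k$ phase polynomial with a homomorphism), so $\phi_{*\tau}$ restricted to $\tau(\ab'')$ is literally an average of phase polynomials of degree $\le k$. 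Handling the passage to the bounded-index subgroup costs only a bounded factor in the correlation (averaging $f$ over the cosets and pigeonholing), but now the relevant degree $C$ is genuinely $k$ on that subgroup — which is even better than needed, and one then extends back to $\ab$ using Corollary \ref{cor:mtorocomp} once more to produce a global phase polynomial of bounded degree $C(k,m)$ on $\ab$ agreeing suitably on the subgroup, exactly as in Remark \ref{rem:AltPfs}.

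The final step is bookkeeping: from $|\mb{E}_{x}f(x)\overline{\phi_{*\tau}(x)}|\ge\varepsilon_0$ and the representation of $\phi_{*\tau}$ (on a bounded-index subgroup, then extended) as a convex combination $\phi_{*\tau}=\mb{E}_{w\in K}\,\Phi_w$ of 1-bounded phase polynomials $\Phi_w$ of degree at most $C=C(k,m)$, the triangle inequality gives $\mb{E}_{w\in K}|\mb{E}_x f(x)\overline{\Phi_w(x)}|\ge\varepsilon_0$, so some single $w$ yields $|\langle f,\Phi_w\rangle|\ge\varepsilon_0$; writing $\Phi_w=e(-P)$ for a polynomial $P:\ab\to\mb{R}/\mb{Z}$ of degree $\le C$ finishes the proof with $\varepsilon=\varepsilon_0$. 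I expect the main obstacle to be the bounded-index-subgroup reduction: one must carefully track that replacing $\ab$ by $\tau(\ab'')$ and restricting $f$ only loses a factor depending on $m$ and $k$ (not on $|\ab|$), that the complement $G$ of $K$ exists with $\tau|_G$ an isomorphism rather than merely surjective, and that the final extension of the phase polynomial from the subgroup back to all of $\ab$ (via Corollary \ref{cor:mtorocomp}) inflates the degree only to a bound $C(k,m)$; the rest is routine Fourier/pigeonhole manipulation.
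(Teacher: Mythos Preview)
There is a genuine gap in your argument: the homomorphic section $s:\ab\to\ab'$ that you need does \emph{not} exist in general, and Corollary~\ref{cor:mtorocomp} cannot produce one. That corollary requires the input subgroup to have \emph{bounded index}, but $K=\ker\tau$ has index $|\ab|$ in $\ab'$, which is unbounded. Concretely, take $\ab'=\mb{Z}_{p^2}^n$, $\ab=\mb{Z}_p^n$, and $\tau$ the coordinatewise reduction mod $p$; then $K=p\,\mb{Z}_{p^2}^n$. Every element of order $p$ in $\ab'$ lies in $K$, so the only subgroup $G\le\ab'$ of exponent $p$ with $G\cap K=\{0\}$ is $G=\{0\}$. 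Hence no subgroup of $\ab'$ maps isomorphically under $\tau$ onto any nontrivial subgroup of $\ab$, and in particular there is no bounded-index subgroup $\ab_0\le\ab$ admitting a homomorphic section. Your bounded-index reduction simply cannot be carried out here.

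The paper's proof fixes exactly this point by replacing the (non-existent) homomorphic section with a \emph{polynomial} cross-section $\iota:\ab\to\ab'$ of degree $O_{m,k}(1)$, whose existence is the content of Theorem~\ref{thm:most-general-cross-poly}. One then writes $\phi_{*\tau}(x)=\mb{E}_{u\in K}\,\phi(\iota(x)+u)$ exactly as in your \eqref{eq:avgtransl}, pigeonholes over $u$, and observes that each $x\mapsto\phi(\iota(x)+u)$ is a phase polynomial on $\ab$ of degree at most $k\cdot\deg(\iota)=O_{m,k}(1)$. The degree inflation from $k$ to $C(k,m)$ arises precisely from $\deg(\iota)$; your approach, by insisting on $\deg(\iota)=1$, would (if it worked) give $C=k$ and thereby answer Question~\ref{Q:JST} in full, which is a strong indication that something must fail.
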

\noindent Given a surjective homomorphism $\tau:B\to\ab$, by a \emph{polynomial cross-section} for $\tau$ we mean a map $\iota:\ab\to B$ which is polynomial and such that $\tau\co \iota$ is the identity map on $\ab$.

The main result that we will use is the following.
\begin{theorem}\label{thm:most-general-cross-poly}
Let $m,m'\ge 1$ be integers. Then there exists a constant $C(m,m')\in \mb{N}$ such that the following holds. Let $\ab,B$ be finite abelian groups of torsion $m$ and $m'$ respectively and let $\tau:B\to \ab$ be a surjective homomorphism. Then there exists a polynomial cross-section $\iota:\ab\to B$ of degree at most $C(m,m').$\end{theorem}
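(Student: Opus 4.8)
The plan is to reduce, via the primary decomposition, to abelian $p$-groups, then to cyclic target, then to building a polynomial cross-section of a reduction map $\mathbb{Z}/p^c\mathbb{Z}\to\mathbb{Z}/p^a\mathbb{Z}$, and finally to constructing the latter by iterating one explicit ``one-step'' cross-section.

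\emph{Reduction steps.} Since $\tau$ is surjective, for each prime $p$ it maps the $p$-primary part $B_p$ of $B$ onto the $p$-primary part $\ab_p$ of $\ab$, and $\tau$ is the direct sum of the surjections $\tau_p:=\tau|_{B_p}:B_p\to\ab_p$. A polynomial cross-section of $\tau$ is then the direct sum $\bigoplus_p\iota_p$ of polynomial cross-sections $\iota_p$ of the $\tau_p$, and applying discrete derivatives to such a blockwise sum acts blockwise, so $\deg\big(\bigoplus_p\iota_p\big)=\max_p\deg\iota_p$. Thus we may assume $\ab,B$ are abelian $p$-groups with $\ab$ of exponent dividing $m$ and $B$ of exponent dividing $m'$. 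Write $\ab=\bigoplus_i\langle e_i\rangle$ with $\langle e_i\rangle\cong\mathbb{Z}/p^{a_i}\mathbb{Z}$, $p^{a_i}\mid m$; choose $y_i\in B$ with $\tau(y_i)=e_i$, so $\langle y_i\rangle\cong\mathbb{Z}/p^{c_i}\mathbb{Z}$ with $a_i\le c_i$ and $p^{c_i}\mid m'$, and $\tau$ restricts to a map $\langle y_i\rangle\to\langle e_i\rangle$ which, under these identifications, is the reduction $\mathbb{Z}/p^{c_i}\mathbb{Z}\to\mathbb{Z}/p^{a_i}\mathbb{Z}$. If each such reduction admits a polynomial cross-section $\sigma_i$ of degree at most $D(a_i,c_i)$, then $\iota_p\big(\sum_i n_ie_i\big):=\sum_i\sigma_i(n_i)\in\bigoplus_i\langle y_i\rangle\subseteq B$ is a polynomial cross-section of $\tau_p$ of degree at most $\max_i D(a_i,c_i)$, bounded in terms of $m,m'$. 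So everything reduces to the following claim about reduction maps of cyclic $p$-groups.

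\emph{The one-step cross-section (the crux).} For an integer $j\ge1$ put $s_j(n):=n-p^j\binom{n}{p^j}$. I claim $s_j$ descends to a well-defined polynomial cross-section $\mathbb{Z}/p^j\mathbb{Z}\to\mathbb{Z}/p^{j+1}\mathbb{Z}$ of the reduction $\mathbb{Z}/p^{j+1}\mathbb{Z}\to\mathbb{Z}/p^j\mathbb{Z}$, of degree at most $p^j$. Indeed $s_j(n)\equiv n\pmod{p^j}$, so it is a cross-section once well-definedness modulo $p^{j+1}$ is checked; and $s_j(n+p^j)-s_j(n)=p^j\big(1-\Delta\big)$ where $\Delta=\binom{n+p^j}{p^j}-\binom{n}{p^j}=\sum_{i=1}^{p^j}\binom{p^j}{i}\binom{n}{p^j-i}$, whose $i=p^j$ term equals $1$ while by Kummer's theorem $v_p\binom{p^j}{i}=j-v_p(i)\ge1$ for $1\le i\le p^j-1$; hence $\Delta\equiv1\pmod p$ and $s_j(n+p^j)-s_j(n)\equiv0\pmod{p^{j+1}}$. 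Finally $s_j$ is the reduction of an integer-valued polynomial of degree $p^j$, so it has degree at most $p^j$ as a map of abelian groups. Now for $a\le c$ the composition $\sigma:=s_{c-1}\circ s_{c-2}\circ\cdots\circ s_a:\mathbb{Z}/p^a\mathbb{Z}\to\cdots\to\mathbb{Z}/p^c\mathbb{Z}$ is, by telescoping the cross-section relations, a cross-section of the reduction $\mathbb{Z}/p^c\mathbb{Z}\to\mathbb{Z}/p^a\mathbb{Z}$; substituting the integer-valued polynomial representatives of the $s_j$ into one another shows $\deg\sigma\le\prod_{j=a}^{c-1}p^j\le p^{c^2}$. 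Taking $D(a,c):=p^{c^2}$ and then $C(m,m')$ to be the maximum of the resulting bounds over the finitely many relevant triples $(p,a,c)$ completes the proof.

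\emph{Expected main obstacle.} The only non-routine part is the one-step construction and the verification of its $p^j$-periodicity modulo $p^{j+1}$ via Kummer's theorem; the primary decomposition, the passage to cyclic factors, and the iteration are bookkeeping. (An alternative higher-level route would lift the inclusion morphism $\mathcal{D}_1(\ab)\hookrightarrow\mathcal{D}_d(\ab)$ through the fibration $\mathcal{D}_d(B)\to\mathcal{D}_d(\ab)$ and use the periodicity results of Section \ref{sec:prophase}, but arranging the lift so that it descends through $\ab$ appears to require essentially the same arithmetic input, so the direct argument above seems preferable.)
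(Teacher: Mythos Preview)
Your proof is correct and takes a genuinely different, more elementary route than the paper's. Both arguments begin by reducing to $p$-groups via the primary decomposition, but then diverge. The paper proceeds by decomposing the \emph{surjection} $\tau$ itself: Proposition~\ref{prop:technical-prop-poly-cross} writes $\tau$ (up to isomorphisms) as a composite $(A,\id_{\mathbb{Z}_{p^n}^m})\circ P$ where $P$ projects certain $\mathbb{Z}_{p^n}$-coordinates down to $\mathbb{Z}_{p^{n-1}}$ and $A$ is a surjection between groups of strictly smaller torsion; one then inducts on $n$, invoking Lemma~\ref{lem:thm:poly-cross-sec-1} to build a cross-section for $P$. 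By contrast, you decompose only the \emph{target} $\ab$ into cyclic summands $\langle e_i\rangle$, lift each generator arbitrarily to some $y_i\in B$, and reduce everything to cross-sections of reduction maps $\mathbb{Z}/p^{c_i}\mathbb{Z}\to\mathbb{Z}/p^{a_i}\mathbb{Z}$, ignoring the internal structure of $B$ entirely. This is legitimate precisely because a polynomial cross-section need not respect any direct-sum structure on $B$; the sum $\sum_i\sigma_i(n_i)y_i$ is well-defined in $B$ regardless of whether the $\langle y_i\rangle$ are independent, and each summand is polynomial of bounded degree in the $i$-th coordinate of $\ab$. Your approach thus dispenses with the row-reduction argument in Proposition~\ref{prop:technical-prop-poly-cross} and the induction on $n$, at the cost of not tracking the structure of $\tau$ (which the present statement does not require anyway).

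Your one-step cross-section $s_j(n)=n-p^j\binom{n}{p^j}$ is also a cleaner alternative to the paper's Lemma~\ref{lem:thm:poly-cross-sec-1}, which uses the standard representative $\{0,\ldots,p^s-1\}$ and analyzes its derivatives via powers of a circulant matrix (Lemma~\ref{lem:powers-A_P^s}). Your Kummer/Vandermonde verification of $p^j$-periodicity mod $p^{j+1}$ is correct, and the degree bound under iteration follows either from substituting the integer-valued polynomial representatives as you indicate, or directly from the composition bound for polynomial maps (this is \cite[Lemma~A.2]{CGSS-doublecoset}, which the paper also uses). The resulting degree bound $p^{c^2}$ is comparable to the paper's implicit bound.
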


\begin{remark}
Not every cross-section is a polynomial map. For example, let $\tau:\mb{Z}_6\to \mb{Z}_3$ be the map $x\mod 6\mapsto x\mod 3$. The cross-section $\iota:\mb{Z}_3\to \mb{Z}_6$ defined as $0\mapsto 0, 1\mapsto 1$ and $2\mapsto 5$ can be proved not to be a polynomial map of any degree.
\end{remark}

We split the proof of Theorem \ref{thm:most-general-cross-poly} into several lemmas.

\begin{lemma}\label{lem:thm:poly-cross-sec-1}
Let $d\ge s$ be positive integers and let $p$ be a prime. Let $\varphi:\mb{Z}_{p^d}\to \mb{Z}_{p^s}$ be the map $x\mod p^d\mapsto x\mod p^s$. Let $\iota:\mb{Z}_{p^s}\to \mb{Z}_{p^d}$ be  defined by $n\!\mod p^s\mapsto n\!\mod p^d$ for each $n\in [0,p^s-1]$. Then $\iota$ is a polynomial cross-section for $\varphi$ of degree at most $(d-s)p^s+1$.
\end{lemma}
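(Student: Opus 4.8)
The plan is to verify the two defining properties of a polynomial cross-section separately. That $\iota$ is a cross-section is immediate: for $n\in[0,p^s-1]$ one has $\varphi\big(\iota(n\bmod p^s)\big)=\varphi(n\bmod p^d)=n\bmod p^s$, so $\varphi\co\iota=\id$. For the degree bound we may assume $d>s$ and $p^s\ge 2$, the remaining cases being trivial (then $\iota$ is the identity or is constant, of degree at most $1\le(d-s)p^s+1$).

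For the degree bound I would first reduce to an iterated difference along a single generator. Since $\mb{Z}_{p^s}$ is cyclic with generator $1$, for every $h\in\mb{Z}_{p^s}$ the operator $\partial_h f(x):=f(x+h)-f(x)$ satisfies $\partial_{n\cdot 1}=\big(\sum_{j=0}^{n-1}T^{j}\big)\partial_1$, where $T:=1+\partial_1$ is the unit shift; thus $\partial_h$ is divisible by $\partial_1$ in the commutative ring generated by $T$, and hence $\partial_{h_1}\cdots\partial_{h_N}$ is a multiple of $\partial_1^{\,N}$ for any $h_1,\dots,h_N\in\mb{Z}_{p^s}$. It therefore suffices to prove $\partial_1^{\,(d-s)p^s+1}\iota=0$, since this forces $\iota$ to have degree at most $(d-s)p^s\le(d-s)p^s+1$.

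The computation of the iterated difference I would carry out in the group ring $R:=\mb{Z}_{p^d}[\mb{Z}_{p^s}]=\mb{Z}_{p^d}[t]/(t^{p^s}-1)$, identifying a function $f:\mb{Z}_{p^s}\to\mb{Z}_{p^d}$ with $\widehat f:=\sum_x f(x)\,t^x$; then $\widehat{\partial_1 f}=(t^{-1}-1)\widehat f=t^{-1}(1-t)\widehat f$, with $t^{-1}$ a unit. A direct inspection of $\iota$ gives $\partial_1\iota=1-p^s\,1_{\{-1\}}$, where $1$ denotes the constant function $1$ and $1_{\{-1\}}$ the indicator of the element $-1$ of $\mb{Z}_{p^s}$: indeed $\iota(x+1)=\iota(x)+1$ unless $x\equiv-1$, in which case $\iota$ drops from $p^s-1$ to $0$. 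Since $\partial_1$ annihilates constants, $\partial_1^{\,m+1}\iota=-p^s\,\partial_1^{\,m}1_{\{-1\}}$ for $m\ge 1$, and since $\widehat{1_{\{-1\}}}=t^{-1}$ is a unit this gives $\widehat{\partial_1^{\,m+1}\iota}=-p^s\,t^{-m-1}(1-t)^{m}$ in $R$. The proof is then completed by the elementary fact that $(1-t)^{p^s}\in pR$: this follows from $(1-t)^{p^s}=\sum_{\ell=0}^{p^s}\binom{p^s}{\ell}(-1)^\ell t^\ell$, the divisibility $p\mid\binom{p^s}{\ell}$ for $0<\ell<p^s$, the relation $t^{p^s}=1$, and $1+(-1)^{p^s}\equiv0\bmod p$. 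Hence $(1-t)^{(d-s)p^s}\in p^{d-s}R$, so taking $m=(d-s)p^s\ge 1$ we get $\widehat{\partial_1^{\,(d-s)p^s+1}\iota}=-p^s\,t^{-(d-s)p^s-1}(1-t)^{(d-s)p^s}\in p^{\,d}R=0$, as required.

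The only part requiring some care is the bookkeeping turning iterated discrete derivatives into multiplication by $1-t$ in $R$ — in particular the two-term identity $\partial_1\iota=1-p^s 1_{\{-1\}}$ and the observation that the next application of $\partial_1$ kills its constant part (which is why the reduction to $m\ge1$, hence the exclusion of the trivial case $d=s$, is needed). Once this is in place, the only genuine input is the standard $p$-adic divisibility of the binomial coefficients $\binom{p^s}{\ell}$. An essentially equivalent alternative would be to work with integer-valued finite differences of the sawtooth function $x\mapsto x-p^s\lfloor x/p^s\rfloor$ and invoke Kummer's theorem, but the group-ring formulation keeps the estimates shortest.
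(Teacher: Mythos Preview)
Your proof is correct and follows essentially the same route as the paper's: compute $\partial_1\iota$, observe the $p^s$ factor appearing after killing the constant, and then use that applying $\partial_1$ a further $p^s$ times introduces an extra factor of $p$. The only difference is packaging: the paper phrases the last step via the circulant forward-difference matrix $C_{p^s}$ and proves in a separate lemma that all entries of $C_{p^s}^{p^s}$ are divisible by $p$, whereas you work in the group ring $\mb{Z}_{p^d}[t]/(t^{p^s}-1)$ and prove the equivalent statement $(1-t)^{p^s}\in pR$ directly from $p\mid\binom{p^s}{\ell}$. These are two standard presentations of the same computation (the circulant matrix ring is the regular representation of this group ring), and your version is arguably tidier: it avoids the separate matrix lemma and, incidentally, yields the slightly sharper $\partial_1^{(d-s)p^s+1}\iota=0$ (degree $\le (d-s)p^s$) where the paper obtains $\partial_1^{(d-s)p^s+2}\iota=0$.
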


\begin{remark}
Note that this result follows from \cite[Lemma 8.2]{JST-tot-dis} with degree-bound $d(p^s-1)$, which is sometimes better and sometimes worse than our bound, depending on $d,s$ and $p$.
\end{remark}
\noindent The argument has similarities with the proof of \cite[Proposition B.2]{CGSS-p-hom}. We want to prove that if we take sufficiently many derivatives of $\iota$ then we obtain the 0 map. Without loss of generality, it suffices to take derivatives $\partial_a \iota(x):=\iota(x+a)-\iota(x)$ with respect to the generator $a=1\in \mb{Z}_{p^s}$. Note that $\partial_1 \iota(x)=1$ if $x\not=p^s-1$ and $\partial_1 \iota(p^s-1)=1-p^s$. Taking one more derivative, $\partial^2_1 \iota(x)=0$ if $x\not=p^s-1$, $\partial^2_1 \iota(p^s-2)=-p^s$ and $\partial^2_1 \iota(p^s-1)=p^s$. To take derivatives of higher degree, as is standard, we can view the map $\partial_1^2 \iota$ as a vector in $\mb{Z}^{p^s}_{p^d}$ and take the derivatives by left-multiplying this vector by the forward difference matrix, i.e.\ the circulant matrix  $C_{p^s}:=\begin{psmallmatrix} -1 & 1 & 0 & \cdots & 0 \\
0& -1 & 1  & \cdots & 0 \\
\vdots &   &   & \ddots & \\
1 & 0 & \cdots & 0 & -1\end{psmallmatrix}\vspace{0.05cm}\in M_{p^s\times p^s}(\mb{Z})$. Known results on circulant matrices imply the following fact.
\begin{lemma}\label{lem:powers-A_P^s}
For any prime $p$ and any integer $s\ge 1$ all the entries of $C_{p^s}^{p^s}$ are multiples of $p$.
\end{lemma}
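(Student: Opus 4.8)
The plan is to identify the forward-difference circulant $C_{p^s}$ as $N-I$, where $N$ is the $p^s\times p^s$ cyclic-shift permutation matrix, and then to exploit the fact that modulo $p$ the $p^s$-th power behaves like a Frobenius twist.

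First I would write $C_{p^s}=N-I$, where $N\in M_{p^s\times p^s}(\mb{Z})$ has $1$'s on the superdiagonal together with a single $1$ in the bottom-left corner. Reading off its columns shows that $N$ implements the cyclic permutation $e_1\mapsto e_{p^s}$ and $e_j\mapsto e_{j-1}$ for $2\le j\le p^s$ on the standard basis of $\mb{Z}^{p^s}$; in particular $N$ is a single $p^s$-cycle, so $N^{p^s}=I$.

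Next I would reduce the identity to a computation in $M_{p^s\times p^s}(\mb{F}_p)$. Since $N$ commutes with $I$, the binomial theorem gives $(N-I)^{p^s}=\sum_{j=0}^{p^s}\binom{p^s}{j}(-1)^{p^s-j}N^{j}$, and since $\binom{p^s}{j}\equiv 0\pmod p$ for $0<j<p^s$, only the terms $j=0$ and $j=p^s$ survive modulo $p$. Hence
\[
(N-I)^{p^s}\equiv (-1)^{p^s}I+N^{p^s}\equiv N^{p^s}-I = 0 \pmod p,
\]
where we used $(-1)^{p^s}\equiv -1\pmod p$ (valid for odd $p$, and also for $p=2$ since $-1\equiv 1$ there). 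Therefore every entry of $C_{p^s}^{p^s}=(N-I)^{p^s}$, computed over $\mb{Z}$, is divisible by $p$, which is exactly the assertion of the lemma.

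I do not expect any real obstacle here. The only points that merit a line of verification are that $N$ is genuinely a single $p^s$-cycle (so that $N^{p^s}=I$) and the standard divisibility $p\mid\binom{p^s}{j}$ for $0<j<p^s$; both are routine. (One could instead diagonalize $C_{p^s}$ over $\mb{C}$ via the discrete Fourier transform, but extracting the $p$-divisibility of the integer entries that way is messier, so I would favor the Frobenius-type argument above.)
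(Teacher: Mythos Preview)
Your proof is correct and is essentially the same as the paper's: both write $C_{p^s}=A-I$ for the cyclic permutation matrix $A$ (the paper's $A_{p^s}$ is your $N$), expand $(A-I)^{p^s}$ by the binomial theorem, and use $p\mid\binom{p^s}{j}$ for $0<j<p^s$ together with $A^{p^s}=I$. The only cosmetic differences are that the paper cites \cite{Feng} for the binomial expansion and actually proves the divisibility $p\mid\binom{p^s}{j}$ by counting $p$-adic valuations, whereas you take it as standard.
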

\begin{proof}
By equation (8) in \cite{Feng}, for every $q\in\mb{N}$ we have $C_{p^s}^q=\sum_{j=0}^q\binom{q}{j}(-1)^{j}A_{p^s}^{q-j}$, where $A_{p^s}$ is the cyclic permutation matrix (see \cite{Feng}). Taking $q=p^s$, we claim that it suffices to prove that $\binom{p^s}{j}=\frac{p^s!}{j!(p^s-j)!}$ is a multiple of $p$ if $0<j<p^s$. In fact, the contributions for $j=0$ and $j=p^s$ cancel each other if $p$ is odd as $A_{p^s}^0=A_{p^s}^{p^s}=\id_{p^s\times p^s}$ and thus $\binom{p^s}{0}(-1)^0\id_{p^s\times p^s}+\binom{p^s}{p^s}(-1)^{p^s}\id_{p^s\times p^s} = 0$. If $p=2$ we have $\binom{2^s}{0}(-1)^0\id_{2^s\times 2^s}+\binom{2^s}{2^s}(-1)^{2^s}\id_{2^s\times 2^s} = 2\id_{2^s\times 2^s}$ which is a multiple of $p=2$ as claimed. To see the general case $0<j<p^s$, note first that the number of $p$ factors in $j!$ is precisely $\sum_{i=1}^{s-1} \lfloor j/p^i\rfloor$. Thus, it suffices to prove that
$\sum_{i=1}^{s-1} \lfloor j/p^i\rfloor+\sum_{i=1}^{s-1} \lfloor (p^s-j)/p^i\rfloor < 1+p+\cdots+p^{s-1} = \frac{p^s-1}{p-1}$, 
where the right hand side is the number of $p$ factors of $p^s!$. The left hand side can be estimated using the bound $\sum_{i=1}^{s-1} \lfloor j/p^i\rfloor \le \sum_{i=1}^{s-1} j/p^i = j\frac{p^{s-1}-1}{(p-1)p^{s-1}}$. Hence, the left  side is bounded above by $j\frac{p^{s-1}-1}{(p-1)p^{s-1}}+(p^s-j)\frac{p^{s-1}-1}{(p-1)p^{s-1}} = \frac{p^s-p}{p-1}$, which is smaller than the number of $p$ factors in $p^s!$.
\end{proof}

\begin{proof}[Proof of Lemma \ref{lem:thm:poly-cross-sec-1}] Note that after two derivatives, the map $\partial^2_1 \iota$ has already a factor $p^s$. Each time that we differentiate $p^s$ additional times we add (at least) a factor $p$ by Lemma \ref{lem:powers-A_P^s}. It follows that $\partial^{kp^s+2}_1 \iota(x)$ is a multiple of $p^{k+s}$ for any $x\in \mb{Z}_{p^s}$. Thus, if $k+s=d$ then we have $\partial^{kp^s+2}_1 \iota=0 \!\mod p^d$. Hence $\iota$ is a polynomial of degree at most $(d-s)p^s+1$.
\end{proof}
\begin{remark}
The degree estimate in Lemma \ref{lem:thm:poly-cross-sec-1} is not tight. For example, let $\varphi:\mb{Z}_{3^2}\to \mb{Z}_3$ be the map $x\mod 9\mapsto x \mod 3$. Then $\iota:\mb{Z}_3\to \mb{Z}_{3^2}$ is a polynomial of degree 3 (this can be chekced manually). However, Lemma \ref{lem:thm:poly-cross-sec-1} gives the bound $(2-1)3^1+1=4$.
\end{remark}

\noindent We shall want to apply Lemma \ref{lem:thm:poly-cross-sec-1} to surjective homomorphisms on more general groups. The idea is that a general surjective homomorphism $\tau:B\to\ab$ can be decomposed according to the $p$-Sylow subgroups of $B$ and $\ab$, and this reduces the task to the case where $B,\ab$ are $p$-groups. To handle this case we shall use the following technical result.
\begin{proposition}\label{prop:technical-prop-poly-cross}
Let $p$ be a prime number and $n\in \mb{N}$. Let $B,\ab$ be finite abelian groups of torsion $p^n$ and let $M:B\to \ab$ be a surjective homomorphism. Then there exist the following:\setlength{\leftmargini}{0.7cm}
\begin{enumerate}
    \item some $m\in \mb{Z}_{\ge 0}$ and some abelian group $\ab'$ of torsion $p^{n-1}$ such that $\ab \cong\ab' \times \mb{Z}_{p^n}^m$,
    \item an abelian group $B'$ of torsion $p^{n-1}$ and a  surjective homomorphism $A:B'\to \ab'$,
    \item a surjective homomorphism $P:B\to B'\times \mb{Z}_{p^n}^m$ with the following property: There exists some isomorphisms   $B\to \prod_{i=1}^n \mb{Z}_{p^{i}}^{a_i}$ for\footnote{Note that since $M$ is surjective we have $a_n\ge m$.} $a_i\in \mb{Z}_{\ge 0}$  and $B'\to  (\prod_{i=1}^{n-1} \mb{Z}_{p^{i}}^{a_i}) \times \mb{Z}_{p^{n-1}}^{a_n-m}$   such that, abusing the notation, if we consider $P$ as a map $P:\prod_{i=1}^n \mb{Z}_{p^{i}}^{a_i}\to (\prod_{i=1}^{n-1} \mb{Z}_{p^{i}}^{a_i})\times \mb{Z}_{p^{n}}^{m} \times \mb{Z}_{p^{n-1}}^{a_n-m} $ (via these isomorphisms), then $P$ is the identity map in every coordinate except in the first $m$ terms of $\mb{Z}_{p^n}^{a_n}$, where it equals the natural projection $\mb{Z}_{p^n}\to \mb{Z}_{p^{n-1}}$, 
    \item  isomorphisms $S:\ab\to \ab$ and $T:B\to B$,
\end{enumerate}
such that $M=S\co (A,\id_{\mb{Z}_{p^n}^m})\co P \co T$.
\end{proposition}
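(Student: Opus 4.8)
The plan is to reduce $M$ to a normal form by pre- and post-composing with automorphisms of $B$ and $\ab$, and to read the required factorisation directly off this normal form; this amounts to a by-hand Smith normal form over the local ring $\mb{Z}/p^n\mb{Z}$, arranged so as to strip exactly one level of torsion off $\ab$. Throughout one uses the structure theorem for finite abelian $p$-groups. I expect the main obstacle to be the very first step, namely showing that a suitable lift into $B$ of the ``top'' part of $\ab$ is a direct summand of $B$; once that is in hand the rest is block-triangularisation together with routine bookkeeping.

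\emph{Step 1: splitting off the top structure group of $\ab$.} I would fix a decomposition of $\ab$ into cyclic groups, take generators $y_1,\dots,y_m$ of the summands of order $p^n$ (so that $m=\dim_{\mb{F}_p}p^{n-1}\ab$), and let $\ab'$ be the sum of the remaining summands, so that $\ab=\langle y_1,\dots,y_m\rangle\oplus\ab'$ with $\ab'$ of torsion $p^{n-1}$. Using surjectivity of $M$, pick $u_i\in B$ with $M(u_i)=y_i$; since $B$ has torsion $p^n$, each $u_i$ has order exactly $p^n$. Applying $M$ to a relation $\sum_ic_iu_i=0$ shows that $H:=\langle u_1,\dots,u_m\rangle\cong\mb{Z}_{p^n}^m$; and the same computation, combined with the fact that $M(H)=\langle y_i\rangle$ is a direct summand (hence pure) in $\ab$, shows that $H$ is pure in $B$. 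Since a pure subgroup of a finite abelian group of bounded exponent is a direct summand, $B=H\oplus B''$ for some subgroup $B''$.

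\emph{Step 2: block-diagonalising $M$.} Let $\pi\colon\ab\to\langle y_i\rangle$ be the projection with kernel $\ab'$ and let $\iota\colon\langle y_i\rangle\to H$ be the isomorphism $y_i\mapsto u_i$. Then $\sigma:=\iota\co\pi\co(M|_{B''})\colon B''\to H$ is a homomorphism, and I would use it to define $T_0\colon B\to B$ by $u+b''\mapsto u-\sigma(b'')+b''$ (for $u\in H$, $b''\in B''$), which is an automorphism because it is the identity plus a square-zero endomorphism. A direct computation then gives $M\co T_0=(M|_H)\oplus M''$ with respect to $B=H\oplus B''$ and $\ab=\langle y_i\rangle\oplus\ab'$, where $M|_H\colon H\to\langle y_i\rangle$ is the isomorphism $u_i\mapsto y_i$ and $M'':=(\id-\pi)\co(M|_{B''})\colon B''\to\ab'$; moreover $M''$ is surjective since $M\co T_0$ is.

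\emph{Step 3: resolving the remaining block and assembling.} Since $\ab'$ has torsion $p^{n-1}$, the map $M''$ kills $p^{n-1}B''$, hence factors as $M''=A\co q$ with $q\colon B''\to B':=B''/p^{n-1}B''$ the quotient map and $A\colon B'\to\ab'$ the induced surjection. Writing $B''$ in cyclic form, $B'$ has torsion $p^{n-1}$, the map $q$ is the identity on the cyclic summands of order at most $p^{n-1}$ and the canonical reduction $\mb{Z}_{p^n}\to\mb{Z}_{p^{n-1}}$ on those of order $p^n$, of which there are $\dim_{\mb{F}_p}p^{n-1}B''=a_n-m$ (so $a_n\ge m$). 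I would then set $P:=\id_H\oplus q$ (the identity on $H$ and $q$ on $B''$, with target re-displayed as $B'\times\mb{Z}_{p^n}^m$), and absorb the identifications $H\cong\langle y_i\rangle\cong\mb{Z}_{p^n}^m$ and $\ab=\langle y_i\rangle\oplus\ab'\cong\ab'\times\mb{Z}_{p^n}^m$ into automorphisms $S$ of $\ab$ and $T$ of $B$ (the latter built from $T_0^{-1}$ together with the cyclic-form identification of $B$), thereby obtaining $M=S\co(A,\id_{\mb{Z}_{p^n}^m})\co P\co T$ with $A$, $P$, $S$, $T$ having the properties (i)--(iv). As indicated, the only genuinely delicate point is the purity/direct-summand argument in Step 1; Steps 2 and 3 are then mechanical.
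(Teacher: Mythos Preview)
Your proof is correct and takes a genuinely different route from the paper's. The paper proceeds by explicit Gaussian elimination in coordinates: writing $B=\prod_i\mb{Z}_{p^i}^{a_i}$ and $\ab=\prod_i\mb{Z}_{p^i}^{b_i}$, it observes that each coordinate projection $\pi_{n,j}\co M$ onto a $\mb{Z}_{p^n}$-summand of $\ab$ must (by surjectivity) have some coefficient on a $\mb{Z}_{p^n}$-coordinate of $B$ which is a unit mod $p$, then uses that entry as a pivot to clear its row and column via elementary automorphisms, iterating over all $b_n$ such projections. Your approach is more structural: you lift generators of the $\mb{Z}_{p^n}^m$-part of $\ab$ to a subgroup $H\le B$, show $H$ is pure (hence a direct summand, since $B$ has bounded exponent), block-diagonalise $M$ with a single shear automorphism, and then factor the remaining block through $B''/p^{n-1}B''$. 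What the paper's approach buys is that it avoids invoking the purity $\Rightarrow$ direct-summand criterion, keeping everything at the level of explicit coordinate manipulations; what your approach buys is a cleaner argument that handles all $m$ top summands at once rather than one pivot at a time, and which makes the factorisation $M''=A\co q$ through the torsion-$p^{n-1}$ quotient transparent. The purity verification in your Step~1 (which you rightly flag as the crux) is sound: if $p^kb=\sum c_iu_i$ then applying $M$ and projecting to $\langle y_i\rangle$ forces $c_i\in p^k\mb{Z}_{p^n}$.
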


\begin{proof}
Let $B:=\prod_{i=1}^n \mb{Z}_{p^{i}}^{a_i}$ and $\ab:=\prod_{i=1}^n \mb{Z}_{p^{i}}^{b_i}$ for some integers $a_i,b_i\ge 0$, $i\in[n]$. For $n=1$ the result follows by linear algebra.

We now assume that $n\ge 2$. For $i\in[n]$ and $j\in [b_i]$ let $\pi_{i,j}:\prod_{i=1}^n \mb{Z}_{p^{i}}^{b_i}\to \mb{Z}_{p^i}$ the projection to the $j$-th coordinate of the term $\mb{Z}_{p^i}^{b_i}$. For any such $i,j$, note that $\pi_{i,j}\co M$ is a surjective homomorphism, so letting $\pi_{i,j}\co M((x_{\ell,k})_{\ell\in[n],k\in[a_\ell]}) = \sum_{\ell=1}^n\sum_{k=1}^{a_\ell} r_{\ell,k}^{(i,j)}x_{\ell,k}$ for some $r_{\ell,k}^{(i,j)}\in \mb{Z}$,  there exists some $k=k_j\in[a_n]$ such that $r_{n,k}^{(n,j)}$ is coprime with $p$. Composing with an appropriate isomorphism $T_1:B\to B$ (just swapping two coordinates), we can assume that $M\co T_1$ is such that $\pi_{n,1}\co M\co T_1((x_{\ell,k})_{\ell\in[n],k\in[a_\ell]}) = x_{n,1}+\sum_{(\ell,k)\not=(n,1)}r_{\ell,k}^{(i,j)}x_{\ell,k}$. Then, composing with an appropriate isomorphism $S_1:\ab\to \ab$, we can assume that $S_1\co M\co T_1$ is such that in the expression of $\pi_{i,j}\co S_1\co M\co T_1((x_{\ell,k})_{\ell\in[n],k\in[a_\ell]})$ for $(i,j)\not=(n,1)$ there is no coefficient of $x_{n,1}$. Composing further with another isomorphism $T_1':B\to B$, we can assume that in addition to the previous properties, we have that $\pi_{n,1}\co S_1\co M\co T_1\co T_1'((x_{\ell,k})_{\ell\in[n],k\in[a_\ell]}) = x_{n,1}$.

We now repeat this process for the coordinate $\pi_{n,2}\co S_1\co M\co T_1\co T_1'((x_{\ell,k})_{\ell\in[n],k\in[a_\ell]})$, then for $\pi_{n,3}\co S_1\co M\co T_1\co T_1'((x_{\ell,k})_{\ell\in[n],k\in[a_\ell]})$, and so on, until we obtain that there are isomorphisms $S:\ab\to \ab$ and $T:B\to B$ such that $\pi_{n,j}\co S\co M\co T((x_{\ell,k})_{\ell\in[n],k\in[a_\ell]})=x_{n,j}$ for all $j\in[b_n]$. In particular, letting $P:B\to (\prod_{i=1}^{n-1} \mb{Z}_{p^{i}}^{a_i})\times \mb{Z}_{p^{n}}^{b_n}\times \mb{Z}_{p^{n-1}}^{a_n-b_n}$ be the map that projects the last $a_n-b_n$ coordinates of $\mb{Z}_{p^n}^{a_n}$ from $\mb{Z}_{p^n}$ to $\mb{Z}_{p^{n-1}}$ there exists a surjective homomorphism $A:(\prod_{i=1}^{n-1} \mb{Z}_{p^{i}}^{a_i})\times \mb{Z}_{p^{n-1}}^{a_n-b_n}\to (\prod_{i=1}^{n-1} \mb{Z}_{p^{i}}^{b_i})$ such that, letting $(A,\id_{\mb{Z}_{p^n}^{b_n}}): (\prod_{i=1}^{n-1} \mb{Z}_{p^{i}}^{a_i})\times \mb{Z}_{p^n}^{b_n}\times \mb{Z}_{p^{n-1}}^{a_n-b_n}\to (\prod_{i=1}^{n-1} \mb{Z}_{p^{i}}^{b_i})\times \mb{Z}_{p^n}^{b_n}\cong \ab$ be the natural map, then $S\co M\co T = (A,\id_{\mb{Z}_{p^n}^{b_n}})\co P$. The result follows.
\end{proof}

\begin{proof}[Proof of Theorem \ref{thm:most-general-cross-poly}] Any such homomorphism $\tau:B\to \ab$ can be split into the components defined by the $p$-Sylow decomposition of $\ab$ and $B$. In fact, let $\Lambda$ be the set of primes that divide either $m$ or $m'$. Then there exists isomorphisms $\ab\cong \prod_{p\in \Lambda} \ab_p$ and $B\cong \prod_{p\in \Lambda} B_p$ where $\ab_p,B_p$ are $p$-groups. Abusing the notation and considering $\tau:\prod_{p\in \Lambda} \ab_p\to \prod_{p\in \Lambda} B_p$ there exists homomorphisms $\tau_p:\ab_p\to B_p$ for $p\in \Lambda$ such that $\tau((x_p\in \ab_p)_{p\in \Lambda})=(\tau_p(x_p))_{p\in \Lambda}$. Clearly all the torsion of the groups $\ab_p,B_p$ for $p\in \Lambda$ divide $m$ and $m'$ respectively. Hence, if Theorem \ref{thm:most-general-cross-poly} holds for $p$-groups of bounded torsion we can find cross-sections $\iota_p:B_p\to \ab_p$ of $\tau_p$ for $p\in \Lambda$ and the map $\iota:=(\iota_p)_{p\in \Lambda}:\prod_{p\in \Lambda} \ab_p\to \prod_{p\in \Lambda} B_p$ is clearly a polynomial cross-section of $\tau$ with degree bounded in terms of $m$ and $m'$.

Thus, it suffices to prove the result assuming that both $\ab$ and $B$ are $p$-groups for some prime $p$. Let $n$ be such that $B$ has torsion $p^n$. We will prove the result by induction on $n$, with the case $n=1$ following from linear algebra.

For $n\ge 2$, by Proposition \ref{prop:technical-prop-poly-cross} we have that $\tau = S\co (A,\id_{\mb{Z}_{p^n}^m})\co P\co T$ where $S$ and $T$ are isomorphisms and $A$ and $P$ are surjective homomorphisms. We want to show that there exists cross-sections of the maps $S,(A,\id_{\mb{Z}_{p^n}^m}),P$ and $T$ which are polynomials of some bounded degree (depending only on $p$ and $n$).

Clearly as $S$ and $T$ are isomorphisms their inverses are polynomial cross-sections of degree 1. By $(iii)$ of Proposition \ref{prop:technical-prop-poly-cross} combined with Lemma \ref{lem:thm:poly-cross-sec-1} there exists a cross-section $\iota_1:B'\times \mb{Z}_{p^n}^m\to B$ such that $P\co \iota_1=\id_{B'\times \mb{Z}_{p^n}^m}$ of degree at most $O_{p,n}(1)$. Finally, by $(i)$ and $(ii)$ of Proposition \ref{prop:technical-prop-poly-cross} we have that $A$ is a surjective homomorphism between $p$-groups of torsion at most $p^{n-1}$. Hence, by induction on $n$ there exists a polynomial cross-section $\iota_2:\ab'\to B'$ such that $A\co \iota_2=\id_{\ab'}$ of degree at most $O_{n,p}(1)$. By \cite[Lemma A.2]{CGSS-doublecoset} it follows that $T^{-1}\co \iota_1\co (\iota_2,\id_{\mb{Z}_{p^n}^m})\co S^{-1}$ is a polynomial cross-section of $\tau$ of degree at most $O_{n,p}(1)$.
\end{proof}

We can now give the main proof of this subsection.

\begin{proof}[Proof of Theorem \ref{thm:JST-tot-dis}]
By Theorem \ref{thm:invboundedtor}, the function $f$ correlates with a projected phase polynomial  $(\chi\co \psi)_{*\tau}$ of degree $k$ and torsion $(m,m^{O_k(1)})$, for some homomorphism $\tau:B\to \ab$. By Theorem \ref{thm:most-general-cross-poly} there exists a polynomial cross-section $\iota:\ab\to B$ of degree $O_{m,k}(1)$ such that $\tau\co \iota=\id_{\ab}$. Moreover, for any $u\in \ker(\tau)$ we have that $\iota_u(x):=\iota(x)+u$ is clearly also a polynomial cross-section. Recall that $(\chi\co \psi)_{*\tau}$ is the map  $\mb{E}_{y\in \tau^{-1}(x)}\chi(\psi(y))$ defined for $x\in \ab$. However, for any $x\in \ab$ we have $\mb{E}_{y\in \tau^{-1}(x)}\chi(\psi(y))=\mb{E}_{u\in \ker(\tau)}\chi(\psi(\iota_u(x)))$. Thus $\varepsilon < |\mb{E}_{x\in \ab} f(x)\mb{E}_{u\in \ker(\tau)}\chi(\psi(\iota_u(x)))| = |\mb{E}_{u\in \ker(\tau)}\mb{E}_{x\in \ab} f(x)\chi(\psi(\iota_u(x)))|$. Hence, $\varepsilon < |\mb{E}_{x\in \ab} f(x)\chi(\psi(\iota_u(x)))|$ for some $u\in \ker(\tau)$. Finally note that by \cite[Lemma A.2]{CGSS-doublecoset}, $\psi\co \iota_u$ is in fact a polynomial map with degree bounded by $\deg(\iota)k=O_{m,k}(1)$. The result follows.
\end{proof}

\section{Final remarks and open questions}
\noindent In this section we briefly discuss possible routes towards further progress on  \cite[Question 1.9]{JST-tot-dis}.

Given Theorem \ref{thm:invboundedtor}, one approach is to examine if a projected phase polynomial of degree $k$ can always be approximated by a bounded number of true phase polynomials of degree $k$.

\begin{question}\label{projconweak}
For any positive integers $m,m'$ and any $\eta>0$, does there always exist a positive integer $R=R(m,m',\eta)$ such that the following holds? Let $\ab,\ab'$ be finite abelian groups with torsions $m,m'$ respectively, and let $\phi_{*\tau}$ be a projected phase polynomial of degree $k$ on $\ab$, where $\tau:\ab'\to \ab$ is a surjective  homomorphism. Then there are phase polynomials $\phi_1, \phi_2, \ldots, \phi_R$ of degree $k$ on $\ab$, and 1-bounded coefficients $\lambda_i\in\mb{C}$, $i\in[R]$, such that $\big\|\phi_{*\tau}-\sum_{i=1}^R \lambda_i\phi_i\big\|_1\leq\eta$.
\end{question}

\noindent We note that this question is in fact an alternative formulation of \cite[Question 1.9]{JST-tot-dis}. More precisely, we have the following equivalence.
\begin{proposition}
A positive answer to Question \ref{projconweak} is equivalent to a positive answer to \cite[Question 1.9]{JST-tot-dis}.
\end{proposition}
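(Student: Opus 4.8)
The plan is to prove both implications of the equivalence. Throughout, recall that Theorem~\ref{thm:invboundedtor} produces, for a 1-bounded $f$ on an $m$-torsion group $\ab$ with $\|f\|_{U^{k+1}}\ge\delta$, a rank-preserving projected phase polynomial $\phi_{*\tau}$ of degree $k$ and torsion $(m,m^{\gamma(k)})$ with $|\langle f,\phi_{*\tau}\rangle|\ge\varepsilon$; and recall that a positive answer to \cite[Question 1.9]{JST-tot-dis} is precisely the statement that in Theorem~\ref{thm:JSTbt} one may take $C=k$, i.e.\ the correlating polynomial phase function has degree \emph{exactly} $k$ rather than $C(k,m)$.

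For the direction ``Question~\ref{projconweak} $\Rightarrow$ \cite[Question 1.9]{JST-tot-dis}'', I would start from Theorem~\ref{thm:invboundedtor} applied to $f$, obtaining $\phi_{*\tau}$ of degree $k$ and torsion $(m,m^{\gamma})$ with $|\langle f,\phi_{*\tau}\rangle|\ge\varepsilon$. Apply the hypothesis of Question~\ref{projconweak} with $m'=m^{\gamma}$ and $\eta:=\varepsilon/2$ to write $\|\phi_{*\tau}-\sum_{i=1}^R\lambda_i\phi_i\|_1\le\varepsilon/2$, where each $\phi_i$ is a genuine phase polynomial of degree $k$ on $\ab$ and $R=R(m,m^{\gamma},\varepsilon/2)=O_{m,k,\delta}(1)$. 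Since $f$ is 1-bounded, $|\langle f,\sum_i\lambda_i\phi_i\rangle|\ge\varepsilon-\varepsilon/2=\varepsilon/2$, so by the triangle inequality and pigeonholing over $i\in[R]$ there is some $i$ with $|\langle f,\lambda_i\phi_i\rangle|\ge\varepsilon/(2R)$, hence $|\langle f,\phi_i\rangle|\ge\varepsilon/(2R)$ since $|\lambda_i|\le 1$. Writing $\phi_i=e(-P_i)$ for a polynomial $P_i:\ab\to\mb{R}/\mb{Z}$ of degree at most $k$ gives exactly the conclusion of Theorem~\ref{thm:JSTbt} with $C=k$.

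For the reverse direction ``\cite[Question 1.9]{JST-tot-dis} $\Rightarrow$ Question~\ref{projconweak}'', the idea is that a projected phase polynomial is a contractive projection and hence its own best $L^1$-approximant by phase polynomials is essentially forced by the inverse theorem. Fix $\phi_{*\tau}$ of degree $k$ on $\ab$, torsion $(m,m')$. For any 1-bounded $g$ on $\ab$, Lemma~\ref{lem:dualnormbound} gives $|\langle g,\phi_{*\tau}\rangle|\le\|g\|_{U^{k+1}}$; in particular if $g$ is ``orthogonal'' to all degree-$k$ phase polynomials (i.e.\ $\langle g,\phi'\rangle$ is small for every such $\phi'$) then we must also have $\langle g,\phi_{*\tau}\rangle$ small. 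Quantitatively: suppose, for contradiction, that $\phi_{*\tau}$ cannot be $\eta$-approximated in $L^1$ by any bounded-size combination of degree-$k$ phase polynomials. By a Hahn--Banach / duality argument (the polar of the convex hull of $\{\lambda\phi':|\lambda|\le1,\ \phi'\text{ a degree-}k\text{ phase polynomial}\}$, using that there are finitely many such $\phi'$ on a fixed finite group but we need a bound uniform in $\ab$), one extracts a 1-bounded function $g$ with $\langle g,\phi'\rangle$ uniformly small for all degree-$k$ phase polynomials $\phi'$ on $\ab$, yet $\langle g,\phi_{*\tau}\rangle\ge\eta$. Then $\langle g,\phi_{*\tau}\rangle\ge\eta$ combined with $\|\phi_{*\tau}\|_{U^{k+1}}^*\le1$ (Lemma~\ref{lem:dualnormbound}) forces $\|g\|_{U^{k+1}}\ge\eta$; but the positive answer to \cite[Question 1.9]{JST-tot-dis} then yields a degree-$k$ phase polynomial $\phi'$ with $|\langle g,\phi'\rangle|\ge\varepsilon(\eta,k,m)>0$, contradicting smallness provided the ``uniformly small'' threshold was chosen below $\varepsilon(\eta,k,m)$. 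The uniformity of $R$ in $\ab$ comes from the fact that $\varepsilon$ in Theorem~\ref{thm:JSTbt} / \cite[Question 1.9]{JST-tot-dis} depends only on $\eta,k,m$ and not on $\ab$, so the compactness/iteration argument producing the finite combination terminates in $R=R(m,m',\eta)$ steps.

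The main obstacle I anticipate is making the duality step in the reverse direction genuinely \emph{uniform in the group $\ab$}: on a fixed finite $\ab$ one can always $L^1$-approximate $\phi_{*\tau}$ by a combination of \emph{all} degree-$k$ phase polynomials, but the number of these grows with $|\ab|$, so one needs an energy-increment / iterative $L^1$-approximation scheme (in the spirit of the proof that the inverse theorem implies a ``structure vs.\ uniformity'' decomposition) whose length is controlled purely by $\eta$, $m$, $m'$ via the quantitative $\varepsilon$ from \cite[Question 1.9]{JST-tot-dis} and the dual-norm bound of Lemma~\ref{lem:dualnormbound}. Handling the coefficients $\lambda_i\in\mb{C}$ (allowing complex conjugates, or equivalently treating $\phi'$ and $\overline{\phi'}$) and verifying that the 1-boundedness of the $\lambda_i$ is preserved through the iteration are routine bookkeeping once the scheme is set up.
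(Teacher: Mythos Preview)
Your approach is correct and essentially identical to the paper's: the forward direction matches exactly, and for the reverse direction the paper likewise invokes the Gowers--Wolf Hahn--Banach separation argument combined with Lemma~\ref{lem:dualnormbound}, obtaining directly a uniform bound $R=\eta\,\varepsilon(\eta,k,m)^{-1}$. Thus your anticipated obstacle about uniformity in $\ab$ does not arise and no energy-increment iteration is needed; the one imprecision is that the separating functional from Hahn--Banach is $\eta^{-1}$-bounded in $L^\infty$ rather than $1$-bounded, but rescaling by $\eta$ gives exactly the $1$-bounded $g$ you describe.
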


\begin{proof}
To see the forward implication, we start from the conclusion of Theorem \ref{thm:invboundedtor}, thus we have a projected phase polynomial $\phi_{*\tau}$ such that $|\langle f,\phi_{*\tau}\rangle|\ge \varepsilon>0$ for some $\varepsilon=\varepsilon(\delta,m,k)$. Applying the positive answer to Question \ref{projconweak} with $\eta:=\varepsilon/2$ we obtain $R=R(m,m^{\gamma})$, polynomials $\phi_i$ for $i\in [R]$ and complex numbers $\lambda_i$, $i\in[R]$. Then $\varepsilon \le |\langle f,\phi_{*\tau}\rangle| \le |\langle f,\phi_{*\tau}-\sum_{i=1}^R \lambda_i\phi_i+\sum_{i=1}^R \lambda_i\phi_i\rangle| \le \|f\|_\infty\|\phi_{*\tau}-\sum_{i=1}^R \lambda_i\phi_i\|_1+ |\langle f,\sum_{i=1}^R \lambda_i\phi_i\rangle|\le \varepsilon/2+R\max_{i\in [R]}|\langle f,\phi_i\rangle|$, so there is $j\in [R]$ such that $|\langle f,\phi_j\rangle|\geq \varepsilon/(2R)$.

To see the converse, we can use a standard argument involving the Hahn-Banach theorem from work of Gowers--Wolf (see e.g.\ \cite{GHB}). Indeed, suppose there is no decomposition of the form $\phi_{*\tau}=\sum_{i=1}^R \lambda_i f_i +g$ with $\sum_{i=1}^R |\lambda_i| \leq R$, with $f_i$ being phase polynomials of degree $k$ on $\ab$, and with $g:\ab\to \mb{C}$ satisfying $\|g\|_1\leq \varepsilon$. Then, as explained in \cite[page 25]{GHB} (see the paragraph \emph{Consequences of failure of decomposition}), by the Hahn-Banach theorem there exists $\psi:\ab\to\mb{C}$ satisfying the following properties: firstly $\langle f, \psi\rangle\leq R^{-1}$ for every phase function $f:\ab\mapsto \mb{C}$ of degree $k$, secondly $\langle \psi, \phi_{*\tau}\rangle >1$, and finally $\|\psi\|_\infty \leq \varepsilon^{-1}$. By Lemma \ref{lem:dualnormbound} we have $\|\phi_{*\tau}\|_{U^{k+1}}^*\leq 1$. This together with the second property above implies that $\|\psi\|_{U^{k+1}}> 1$. Hence the function $\varepsilon\psi$ has $U^{k+1}$-norm greater than $\varepsilon$ and has $L^\infty$-norm at most 1, so by the positive answer to \cite[Question 1.9]{JST-tot-dis} there is a  phase function $f:\ab\mapsto \mb{C}$ of degree $k$ such that $\langle f, \varepsilon\psi\rangle > \delta(\varepsilon)$. Therefore $\langle f, \psi\rangle > \varepsilon^{-1} \delta$, which contradicts the first property above if $R= \varepsilon\, \delta^{-1}$.
\end{proof}
\noindent Let us emphasize again here that a positive answer to \cite[Question 1.9]{JST-tot-dis} would also follow from a positive answer to Question \ref{mainQ:p-case}.

Finally, recall the connection between Gowers norms and generalized cut norms established by Corollary \ref{cor:box-norm-estimate}. The norms involved in that result have rather elementary definitions, and yet to prove the result we made a heavy use of nilspace theory. This prompts the following question.
\begin{question}
Is there an elementary proof of Corollary \ref{cor:box-norm-estimate}?
\end{question}

\end{document}